\DeclarePairedDelimiterX\Set[1]{\lbrace}{\rbrace}%
 {  #1 }
\def\bign#1{\mathclose{\hbox{$\left#1\vbox to8.5\p@{}\right.\n@space$}}\mathopen{}}
\def\Bign#1{\mathclose{\hbox{$\left#1\vbox to8.5\p@{}\right.\n@space$}}\mathopen{}}
\def\biggn#1{\mathclose{\hbox{$\left#1\vbox to14.5\p@{}\right.\n@space$}}\mathopen{}}
\def\Biggn#1{\mathclose{\hbox{$\left#1\vbox to17.5\p@{}\right.\n@space$}}\mathopen{}}
\newcommand\blfootnote[1]{%
  \begingroup
  \renewcommand\thefootnote{}\footnote{#1}%
  \addtocounter{footnote}{-1}%
  \endgroup
}
\newenvironment{proof*}[1][\proofname]{\par
  \pushQED{\qed}%
  \normalfont \partopsep=\z@skip \topsep=\z@skip
  \trivlist
  \item[\hskip\labelsep
        \itshape
    #1\@addpunct{.}]\ignorespaces
}{%
  \popQED\endtrivlist\@endpefalse
}
\newcommand*{\rom}[1]{\expandafter\@slowromancap\romannumeral #1@}
\def\Xint#1{\mathchoice
{\XXint\displaystyle\textstyle{#1}}%
{\XXint\textstyle\scriptstyle{#1}}%
{\XXint\scriptstyle\scriptscriptstyle{#1}}%
{\XXint\scriptscriptstyle\scriptscriptstyle{#1}}%
\!\int}
\def\XXint#1#2#3{{\setbox0=\hbox{$#1{#2#3}{\int}$ }
\vcenter{\hbox{$#2#3$ }}\kern-.582\wd0}}
\def\dashint{\Xint-}
\DeclareMathOperator*{\essinf}{ess\,inf}
\newtheorem*{defin}{Definition}
\newtheorem{thm}{Theorem}[section]
\newtheorem{cor}[thm]{Corollary}
\newtheorem{lem}[thm]{Lemma}
\newtheorem*{remark}{Remark}
\newtheorem*{remarkb}{Remark on boundary regularity}
\newtheorem{prop}[thm]{Proposition}
\title{$H^{s,p}$ regularity theory for a class of nonlocal elliptic equations}
\author{Simon Nowak}
\address{Universit\"at Bielefeld, Fakult\"at f\"ur Mathematik, Postfach 100131, D-33501 Bielefeld, Germany}
\email{simon.nowak@uni-bielefeld.de}
\keywords{Nonlocal operators, Elliptic equations, Regularity theory, Bessel potential spaces}
\subjclass[2010]{35R09, 35B65, 35D30, 46E35, 47G20}
\begin{document}

\maketitle
\begin{abstract}
In this paper, we study the regularity of weak solutions to a class of nonlocal elliptic equations in Bessel potential spaces $H^{s,p}$. Our main results can be seen as an extension of the well-known $W^{1,p}$ regularity theory for local second-order elliptic equations in divergence form to the nonlocal setting.
\end{abstract}
\pagestyle{headings}

\tableofcontents

\section{Introduction} 
\subsection{Basic setting}
In this work, we study the regularity of weak solutions to nonlocal elliptic equations of the form \blfootnote{Supported by SFB 1283 of the German Research Foundation.}
\begin{equation} \label{nonlocaleq}
L_A u +bu = \sum_{i=1}^m L_{D_i} g_i + f \text{ in } \Omega \subset \mathbb{R}^n
\end{equation}
in Bessel potential spaces $H^{s,p}$. Roughly speaking, the purpose of this paper is to prove the implication $u \in H^{s,2} \implies u \in H^{s,p}$ for the whole range of exponents $p \in (2,\infty)$ in the case of possibly very irregular data.
Here $s \in (0,1)$, $\Omega \subset \mathbb{R}^n$ $(n > 2s)$ is a domain (= open set), $b,f,g_i:\mathbb{R}^n \to \mathbb{R}$ ($i=1,...,m,$ $m \in \mathbb{N}$) are given functions and 
$$ L_Au(x) = 2 \lim_{\varepsilon \to 0} \int_{\mathbb{R}^n \setminus B_\varepsilon(x)} \frac{A(x,y)}{|x-y|^{n+2s}} (u(x)-u(y))dy, \quad x \in \Omega,$$
is a nonlocal operator. Furthermore, the function $A:\mathbb{R}^n \times \mathbb{R}^n \to \mathbb{R}$ is measurable and we assume that there exists a constant $\lambda \geq 1$ such that
\begin{equation} \label{eq1}
\lambda^{-1} \leq A(x,y) \leq \lambda \text{ for almost all } x,y \in \mathbb{R}^n.
\end{equation}
Moreover, we require $A$ to be symmetric, i.e.
\begin{equation} \label{symmetry}
A(x,y)=A(y,x) \text{ for almost all } x,y \in \mathbb{R}^n.
\end{equation}
We call such a function $A$ a kernel coefficient. We define $\mathcal{L}_0(\lambda)$ as the class of all such measurable kernel coefficients $A$ that satisfy the conditions (\ref{eq1}) and (\ref{symmetry}). Note that in our main results, we additionally assume that $A$ is translation invariant, cf. section 1.3. Moreover, throughout this work $D_i:\mathbb{R}^n \times \mathbb{R}^n \to \mathbb{R}$ ($i=1,...,m$) are assumed to be measurable functions that are symmetric and bounded by some $\Lambda>0$, i.e.
\begin{equation} \label{bound}
\sum_{i=1}^m |D_i(x,y)| \leq \Lambda  \text{ for almost all } x,y \in \mathbb{R}^n.
\end{equation}
Define the spaces
$$H^s(\Omega | \mathbb{R}^n)= \left \{u:\mathbb{R}^n \to \mathbb{R} \text{ measurable } \mathrel{\Big|} \int_{\Omega}u(x)^2dx+ \int_{\Omega} \int_{\mathbb{R}^n} \frac{(u(x)-u(y))^2}{|x-y|^{n+2s}}dydx < \infty \right \}$$
and
$$ H^s_0(\Omega | \mathbb{R}^n)= \left \{ u \in H^s(\Omega | \mathbb{R}^n) \mid u =0 \text{ a.e. in } \mathbb{R}^n \setminus \Omega \right \}.$$
For all measurable functions $u,\varphi:\mathbb{R}^n \to \mathbb{R}$ we define the bilinear form associated to the operator $L_A$ by
$$ \mathcal{E}_A(u,\varphi) = \int_{\mathbb{R}^n} \int_{\mathbb{R}^n} \frac{A(x,y)}{|x-y|^{n+2s}} (u(x)-u(y))(\varphi(x)-\varphi(y))dydx,$$
provided that the above expression is well-defined and finite, this is e.g. the case if $u \in H^s(\Omega | \mathbb{R}^n)$ and $\varphi \in H^s_0(\Omega | \mathbb{R}^n)$.
Analogously we consider the bilinear forms $\mathcal{E}_{D_i}(u,\varphi)$ associated to the operators $L_{D_i}$.
\begin{defin}
	Given $b \in L^\infty(\Omega)$, $f \in L^2(\Omega)$ and $g_i \in H^s(\Omega | \mathbb{R}^n)$, $i=1,...,m$, we say that $u \in H^s(\Omega | \mathbb{R}^n)$ is a weak solution to the equation $L_A u + bu = \sum_{i=1}^m L_{D_i} g_i+ f$ in $\Omega$, if 
	$$ \mathcal{E}_A(u,\varphi) + (bu,\varphi)_{L^2(\Omega)}= \sum_{i=1}^m \mathcal{E}_{D_i}(g_i,\varphi) + (f,\varphi)_{L^2(\Omega)} \quad \forall \varphi \in H^s_0(\Omega | \mathbb{R}^n).$$
\end{defin}

\subsection{Some previous results}

Studying the regularity of weak solutions to equations of the form (\ref{nonlocaleq}) has been a very active area of research in recent years. Results concerning H\"older regularity were e.g. obtained in \cite{Kassmann}, \cite{finnish}, \cite{Silvestre}, \cite{Peral}, \cite{FracLap} and \cite{NonlocalGeneral}, while results concerning higher differentiability in Sobolev spaces were e.g. obtained in \cite{Cozzi} and \cite{BL}. Regarding higher integrability, in \cite{Bass} and \cite{Auscher} it was shown that under the assumptions from section 1.1 there exists some small $\sigma>0$, such that for any weak solution $u$ of $L_A u=f$ in $\mathbb{R}^n$ the function
\begin{equation} \label{nonlocgrad}
\nabla^s u(x)= \left ( \int_{\mathbb{R}^n} \frac{(u(x)-u(y))^2}{|x-y|^{n+2s}}dy \right )^{\frac{1}{2}}
\end{equation}
belongs to $L^{2+\sigma}(\mathbb{R}^n)$ whenever $f \in L^2(\mathbb{R}^n)$. In view of a classical characterization of Bessel potential spaces due to Stein (cf. Theorem \ref{altcharBessel}), this actually implies that $u$ belongs to the Bessel potential space $H^{s,2+\sigma}(\mathbb{R}^n)$. Similar results were proved in \cite{selfimpro}, \cite{Schikorra} and \cite{Auscher}, where it was shown that under the assumptions from section 1.1 $u$ actually not only possesses a higher integrability but also a slightly higher differentiabillity.

\subsection{Main results}

The aim of this work is to prove the $H^{s,p}$ regularity for solutions $u$ to equations of the form (\ref{nonlocaleq}) not only for some $p>2$ close enough to $2$, but for the full range $p \in (2,\infty)$. 
In order to accomplish this, we restrict our attention to the following class of translation invariant kernel coefficients.
\begin{defin}
	Let $\lambda\geq 1$. We say that a kernel coefficient $A \in \mathcal{L}_0(\lambda)$ belongs to the class $\mathcal{L}_1(\lambda)$, if there exists a measurable function $a:\mathbb{R}^n \to \mathbb{R}$ such that $A(x,y)=a(x-y)$ for all $x,y \in \mathbb{R}^n$, that is, if $A$ is translation invariant.
\end{defin}

Our main result concerning local regularity in Bessel potential spaces $H^{s,p}$ is the following.
\begin{thm} \label{mainint5}
\emph{(Local $H^{s,p}$ regularity in domains)} \newline
Let $\Omega \subset \mathbb{R}^n$ be a bounded domain, $p \in (2,\infty)$, $s \in (0,1)$, $b \in L^\infty(\Omega)$, $g_i \in H^s(\Omega | \mathbb{R}^n) \cap H^{s,p}(\mathbb{R}^n)$ and $f \in L^{p_\star}(\Omega)$, where $p_\star=\max \left \{\frac{pn}{n+ps},2 \right \}$. 
If $A$ belongs to the class $\mathcal{L}_1(\lambda)$ and if all $D_i$ are symmetric and satisfy (\ref{bound}), then for any weak solution $u \in H^s(\Omega | \mathbb{R}^n)$ 
of the equation
\begin{equation} \label{nonloceq1}
L_A u + bu= \sum_{i=1}^m L_{D_i} g_i + f \text{ in } \Omega
\end{equation}
we have $u \in H^{s,p}_{loc}(\Omega)$ and $u \in W^{s,p}_{loc}(\Omega)$.
\end{thm}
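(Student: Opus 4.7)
The plan is to establish Theorem \ref{mainint5} via a Calderón–Zygmund/perturbative argument that crucially exploits the translation invariance of $A$. By the self-improvement results of \cite{Bass} and \cite{Auscher} cited in Section 1.2, we already know that weak solutions to \eqref{nonloceq1} satisfy $u \in H^{s,2+\sigma}_{loc}$ for some small $\sigma>0$; the task is to upgrade this to every $p \in (2,\infty)$. I would first split the right-hand side: the source $\sum_i L_{D_i} g_i + f - bu$ will be treated as data, with the contribution of $bu$ absorbed at each step of the iteration using its boundedness and the improved regularity already achieved. Throughout, $\nabla^s u$ from \eqref{nonlocgrad} is the natural quantity to control, so that by Stein's characterization (Theorem \ref{altcharBessel}) the conclusion $u \in H^{s,p}_{loc}(\Omega)$ reduces to proving $\nabla^s u \in L^p_{loc}(\Omega)$.

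Next, for every ball $B_{2r}(x_0) \Subset \Omega$ I would compare $u$ with the solution $v \in H^s(B_r | \mathbb{R}^n)$ of the homogeneous Dirichlet problem
\begin{equation*}
L_A v = 0 \text{ in } B_r(x_0), \qquad v = u \text{ in } \mathbb{R}^n \setminus B_r(x_0).
\end{equation*}
Because $A \in \mathcal{L}_1(\lambda)$ is translation invariant, known interior regularity for homogeneous equations with such kernels yields higher integrability (indeed, smoothness up to $C^{s+\alpha}$ away from $\partial B_r$) of $\nabla^s v$ inside $B_{r/2}(x_0)$, with estimates depending only on $\lambda,n,s$ and tail averages of $u$. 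Testing the equation for $w := u - v$ with $w$ itself then produces a comparison estimate of the form
\begin{equation*}
\biggl(\dashint_{B_{r/2}(x_0)} |\nabla^s w|^2 \biggr)^{1/2} \leq C \Bigl( r^{2s} \|f\|_{L^{p_\star}} + \|b\|_{L^\infty} \|u\|_{\text{tail}} + \sum_{i=1}^m [g_i]_{H^s} + \text{nonlocal tails of } u \Bigr),
\end{equation*}
so that $\nabla^s u$ is comparable, in a suitable averaged sense, to the much more regular $\nabla^s v$ plus a small error controlled by the data.

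The heart of the argument is then a good-$\lambda$/level-set iteration à la Caffarelli–Peral adapted to the nonlocal setting: using the pointwise bound on $\nabla^s v$ together with the $L^2$-type comparison estimate, one shows that the superlevel sets $\{M(\nabla^s u)^2 > N\lambda\}$ of a maximal function of $\nabla^s u$ enjoy a decay $|\{M(\nabla^s u)^2 > N\lambda\}| \leq \varepsilon |\{M(\nabla^s u)^2 > \lambda\}| + |\{M(F)^2 > \delta\lambda\}|$, where $F$ encodes the data terms. A Vitali covering argument then converts this into $L^p$ control of $\nabla^s u$ in terms of $L^p$ norms of the data, first for $p$ slightly larger than $2$ and, by reinserting the improved integrability into the bootstrap for $bu$, for arbitrary $p<\infty$.

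The main obstacle will be the handling of the nonlocal tail contributions that appear both in the comparison estimate and in the interior regularity of $v$: unlike in the local PDE case, $\nabla^s u(x)$ depends on values of $u$ far from $x$, so at each scale $r$ the estimates pick up terms of the form $\int_{\mathbb{R}^n \setminus B_r(x_0)} |u(y)|/|x_0-y|^{n+2s}\,dy$. These tails must be dominated, uniformly under the covering/scaling, by the same $L^p$ quantities that appear on the left-hand side, which is only possible because $u$ already enjoys at least $H^{s,2}$-regularity globally in $\mathbb{R}^n$ (via $g_i \in H^{s,p}(\mathbb{R}^n)$ on the complement and the initial self-improvement). Carefully absorbing these tails, and similarly the contribution of the non-translation-invariant perturbations $L_{D_i} g_i$, into the iteration without losing the open range of admissible exponents is the delicate technical point.
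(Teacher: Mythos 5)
Your proposal follows essentially the same route as the paper: on each ball one compares $u$ with the solution $v$ of the homogeneous Dirichlet problem $L_A v=0$, $v=u$ outside, uses the translation invariance of $A$ to get improved H\"older regularity and hence an $L^\infty$ bound on $\nabla^s v$, runs a good-$\lambda$/maximal-function/Vitali covering argument in the spirit of Caffarelli--Peral, and absorbs $bu$ by bootstrapping the integrability of $u$ via Sobolev embedding. The only discrepancies are minor: the paper needs no initial $H^{s,2+\sigma}$ self-improvement and the level-set argument yields every $p\in(2,\infty)$ in one pass (not merely $p$ slightly above $2$), and the hypothesis $f\in L^{p_\star}$ is reduced to $L^p$ data by solving the auxiliary equation $(-\Delta)^s g+g=f_\Omega$ and using $H^{2s,p_\star}(\mathbb{R}^n)\hookrightarrow H^{s,p}(\mathbb{R}^n)$, a step your sketch glosses over.
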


\begin{remark} \normalfont
We actually obtain a slightly stronger result (cf. Theorem \ref{mainint3}) than the one given by Theorem \ref{mainint5} in terms of certain function spaces $H^{s,p}(\Omega | \mathbb{R}^n)$ that generalize the space $H^s(\Omega | \mathbb{R}^n)$ to the case when $p \neq 2$, cf. section 3. By a useful alternative characterization of Bessel potential spaces (cf. Theorem \ref{altcharBessel}), this space $H^{s,p}(\Omega | \mathbb{R}^n)$ is actually contained in $H^{s,p}(\Omega)$ whenever $\Omega$ is regular enough, so that this result then implies Theorem \ref{mainint5}. Moreover, since Theorem \ref{mainint5} is concerned with local regularity, the above result remains true if we generalize the notion of weak solutions to an appropriate notion of local weak solutions, cf. section 7.
\end{remark}

\begin{remark} \normalfont
Although in Theorem \ref{mainint5} and in our other main results we are primarily concerned with regularity in Bessel potential spaces $H^{s,p}$, due to the classical embedding $H^{s,p} \hookrightarrow W^{s,p}$ for $p \in [2,\infty)$ (cf. Proposition \ref{Sobolevrelations}), we also obtain regularity in Sobolev-Slobodeckij spaces $W^{s,p}$.
\end{remark}

If the equation is posed on the whole space $\mathbb{R}^n$, we are actually able to establish the following global regularity result.

\begin{thm} \label{mainint0}
	\emph{($H^{s,p}$ regularity on the whole space $\mathbb{R}^n$)} \newline
	Let $p \in (2,\infty)$, $s \in (0,1)$, $b \in L^\infty(\mathbb{R}^n)$, $g_i \in H^{s}(\mathbb{R}^n) \cap H^{s,p}(\mathbb{R}^n)$ and $f \in L^2(\mathbb{R}^n) \cap L^{p_\star}(\mathbb{R}^n)$, where $p_\star=\max \left \{\frac{pn}{n+ps},2 \right \}$. 
	If $A$ belongs to $\mathcal{L}_1(\lambda)$ and if all $D_i$ are symmetric and satisfy (\ref{bound}), then for any weak solution $u \in H^s(\mathbb{R}^n)$ 
	of the equation 
	\begin{equation} \label{nonloceq6}
	L_A u + bu= \sum_{i=1}^m L_{D_i} g_i + f \text{ in } \mathbb{R}^n
	\end{equation}
	we have $u \in H^{s,p}(\mathbb{R}^n)$ and $u \in W^{s,p}(\mathbb{R}^n)$. 
\end{thm}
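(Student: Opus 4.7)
The plan is to derive Theorem \ref{mainint0} from the local version (Theorem \ref{mainint5}) via a covering argument, with the translation invariance of $A$ ensuring that local estimates are uniform under spatial translation.

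As a first step, Theorem \ref{mainint5} applied with $\Omega$ an arbitrary bounded domain (e.g.\ $B_R$ for all $R>0$) immediately gives $u \in H^{s,p}_{loc}(\mathbb{R}^n) \cap W^{s,p}_{loc}(\mathbb{R}^n)$. The task then reduces to upgrading this to global integrability. For that, I would extract from the proof of the local theorem a quantitative estimate on unit balls of the schematic form
$$\|u\|_{H^{s,p}(B_1(x_0))}^p \leq C\Bigl( \|u\|_{L^2(B_2(x_0))}^p + T(u;x_0)^p + \|f\|_{L^{p_\star}(B_2(x_0))}^p + \sum_{i=1}^{m} \|g_i\|_{H^{s,p}(B_2(x_0))}^p \Bigr),$$
where $T(u;x_0) = \int_{\mathbb{R}^n \setminus B_2(x_0)} |u(y)|/(1+|x_0-y|)^{n+2s}\,dy$ plays the role of the standard nonlocal tail. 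Here translation invariance of $A$ forces the constant $C$ to be independent of $x_0$.

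Choosing a covering $\{B_1(x_k)\}_{k \in \mathbb{Z}^n}$ of $\mathbb{R}^n$ with finite overlap of the doubled balls $\{B_2(x_k)\}$, I would then raise the estimate to the $p$-th power and sum in $k$. The first three sums on the right are controlled by $\|u\|_{L^2(\mathbb{R}^n)}^p + \|f\|_{L^{p_\star}(\mathbb{R}^n)}^p + \sum_i \|g_i\|_{H^{s,p}(\mathbb{R}^n)}^p$, all of which are finite by assumption (using $u \in H^s(\mathbb{R}^n) \subset L^2(\mathbb{R}^n)$). Summing the tail terms $T(u;x_k)^p$ reduces, via Young's convolution inequality, to an $L^q(\mathbb{R}^n)$ bound on $u$ for some $q \in [2,p]$, which can be obtained either directly from the Sobolev embedding of $H^s(\mathbb{R}^n)$ into $L^{2n/(n-2s)}(\mathbb{R}^n)$, or by a finite bootstrap that progressively raises the integrability of $u$.

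The principal obstacle is the control of the summed tails, because the exponent $p$ can be arbitrarily large whereas the a priori integrability of $u$ coming from $u \in H^s(\mathbb{R}^n)$ is only $2n/(n-2s)$. A bootstrap in which $H^{s,q}$-regularity is converted to $L^{q'}$-integrability via Sobolev embedding, and then fed back as new input to rerun the argument on the ball $B_1(x_0)$, should close the loop in finitely many steps for any finite $p$. A secondary issue is verifying that the quantitative local estimate is genuinely of Calder\'on--Zygmund type, namely that the local-to-global passage does not lose summability; this is where $p \geq 2$ and the specific form of $p_\star$ enter decisively.
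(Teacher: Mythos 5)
Your overall strategy (a covering of $\mathbb{R}^n$ with finite overlap, local estimates whose constants are uniform in the center by translation invariance, summation, and a bootstrap in the integrability exponent) is exactly the strategy of the paper, which covers $\mathbb{R}^n$ by $E_k=B_{\sqrt n}(k)$, $F_k=B_{2\sqrt n}(k)$, sums the ball estimate of Theorem \ref{mainint1}, and then iterates the exponent globally. The genuine gap in your proposal is the pivotal unit-ball estimate, which you only assert can be ``extracted'' from the local theory: in the form you state it, it is \emph{not} a consequence of anything proved in the paper. The available quantitative input is Theorem \ref{mainint1}, which bounds $\|\nabla^s u\|_{L^p(B_1)}$ by $\|f+\sum_i\nabla^s g_i\|_{L^p(B_6)}+\|\nabla^s u\|_{L^2(B_6)}$, i.e.\ the right-hand side carries the $L^2$ norm of the \emph{full} $s$-gradient on the larger ball, not $\|u\|_{L^2}$ plus a linear tail $T(u;x_0)$. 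To trade $\|\nabla^s u\|_{L^2(B_2(x_0))}$ for $\|u\|_{L^2(B_2(x_0))}+T(u;x_0)$ you would need a Caccioppoli-type inequality for the inhomogeneous equation, which this paper never establishes; to have the \emph{local} norm $\|f\|_{L^{p_\star}(B_2(x_0))}$ on the right you would need a localized version of the reduction that the paper carries out only globally, by solving the auxiliary problem $(-\Delta)^s g+g=f$ of (\ref{helpeq}) on all of $\mathbb{R}^n$ and using $H^{2s,p_\star}(\mathbb{R}^n)\hookrightarrow H^{s,p}(\mathbb{R}^n)$; and your schematic estimate also suppresses the zeroth-order term $bu$, which is precisely what forces an $\|u\|_{L^q}$ term at every stage of the exponent iteration (see (\ref{loworderterm})) and is the reason the bootstrap is needed at all.

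Moreover, the point you single out as the ``principal obstacle'' --- summing the tails --- is not an obstacle. Since the kernel $\omega(z)=(1+|z|)^{-n-2s}$ lies in $L^r(\mathbb{R}^n)$ for every $r\geq 1$, Young's inequality with $q=2$ already gives $\||u|*\omega\|_{L^p(\mathbb{R}^n)}\leq\|u\|_{L^2(\mathbb{R}^n)}\|\omega\|_{L^r(\mathbb{R}^n)}$ with $\frac1r=\frac12+\frac1p$, so the required integrability of $u$ does not grow with $p$ and no bootstrap is needed there; and if you simply keep $\|\nabla^s u\|_{L^2}$ on the larger balls, as the paper does, the tail disappears entirely, because $\sum_k\|\nabla^s u\|^q_{L^2(F_k)}\leq\bigl(\sum_k\|\nabla^s u\|^2_{L^2(F_k)}\bigr)^{q/2}\leq\bigl(N\|\nabla^s u\|^2_{L^2(\mathbb{R}^n)}\bigr)^{q/2}<\infty$ for $u\in H^s(\mathbb{R}^n)$ and $q\geq 2$. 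The correct repair of your plan is therefore the paper's route: sum the Theorem \ref{mainint1} estimates at each fixed $q\in(2,p]$ to obtain $\|\nabla^s u\|_{L^q(\mathbb{R}^n)}\leq C(\|f\|_{L^q}+\|u\|_{L^q}+\sum_i\|\nabla^s g_i\|_{L^q}+\|\nabla^s u\|_{L^2})$, iterate $q$ globally via the embeddings of Theorem \ref{BesselEmbedding} and the characterization of Theorem \ref{altcharBessel}, and only then dispose of $f\in L^{p_\star}$ through the global auxiliary equation; your opening application of Theorem \ref{mainint5} is correct but plays no role in this argument.
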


In the case when $b=g_i=0$ $(i=1,...,m)$, by writing $A(x,y)=a(x-y)$, Theorem \ref{mainint0} can also be deduced by applying \cite[Theorem 1]{Bogdan} to the symbol
$$ M(\xi)=\frac{\int_{\mathbb{R}^n}(\cos(\xi \cdot y)-1)a^{-1}(y)V(dy)}{\int_{\mathbb{R}^n}(\cos(\xi \cdot y)-1)V(dy)}, \quad V(dy)=\frac{a(y)}{|y|^{n+2s}}dy.$$

The main achievement of this paper is that we develop the $H^{s,p}$ regularity theory for weak solutions of the equation (\ref{nonlocaleq}) for a general right-hand side and especially in the setting of arbitrary domains $\Omega \subset \mathbb{R}^n$. This is because although in the case of local elliptic equations local regularity in domains $\Omega$ can be deduced from the corresponding result in $\mathbb{R}^n$ by using a cutoff argument, in the nonlocal setting such a cutoff argument requires an additional assumption on the solution in the complement of $\Omega$ (cf. \cite{Warma} or \cite{KassMengScott}), which is not required in Theorem \ref{mainint5}. Another advantage of our approach is that it can be generalized to an appropriate class of nonlinear nonlocal equations. We plan to do this in a future work. 

We also want to mention that in \cite{DongKim} a somewhat related result was proved. Building on an approach used by Krylov in \cite{Krylov} to obtain $W^{1,p}$ estimates for local second-order elliptic equations, the authors obtained $H^{2s,p}$ a priori estimates for strong solutions of the equation $L_A=f$ in $\mathbb{R}^n$.

In order to prove our main results, we instead apply a variation of another approach commonly used in order to obtain $W^{1,p}$ regularity results for local elliptic equations in divergence form which we briefly describe in section 1.4. This enables us to simultaneously treat the problems of local $H^{s,p}$ regularity in domains $\Omega$ and the problem of global $H^{s,p}$ regularity on the whole space $\mathbb{R}^n$.

\subsection{$W^{1,p}$ regularity theory for second-order elliptic equations in divergence form}
Let us briefly review the well-known $W^{1,p}$ regularity theory for local second-order elliptic equations in divergence form treated for example in \cite{ByunLp}, \cite{Weighted12} or \cite{ByunReifenberg}, where the authors build on an approach first introduced by Caffarelli and Peral in \cite{CaffarelliPeral}. Consider the equation 
\begin{equation} \label{76}
\textnormal{div} (B \nabla u) = \textnormal{div} g + f \quad \text{in } \Omega ,
\end{equation}
where $\Omega \subset \mathbb{R}^n$ is a domain, the matrix of coefficients $B:\mathbb{R}^n \to \mathbb{R}^{n\times n}$, $B(x)= (B_{ij}(x))_{i,j=1,...,n}$ has measurable entries, is uniformly elliptic and bounded, while $g: \Omega \rightarrow \mathbb{R}^n$ and $f:\Omega \to \mathbb{R}$ are given functions. Furthermore, solutions are understood in an appropriate weak sense, cf. \cite{Gilbarg} or \cite{ByunLp}. Let $2<p<\infty$. A natural question corresponding to Theorem \ref{mainint5} to ask in this context is the following: Under which assumptions on $B,f$ and $g$ does any weak solution $u \in H^1_{loc}(\Omega)$ of (\ref{76}) in fact belong to the space $u \in W^{1,p}_{loc}(\Omega)$? The minimal assumptions on $g$ and $f$ for this property to hold are $g \in L^p_{loc}(\Omega,\mathbb{R}^n)$ and $f \in L^{\frac{np}{n+p}}_{loc}(\Omega)$, while the coefficients $B_{ij}$ are required to have small enough BMO-seminorms, cf. \cite{ByunLp}. The strategy to obtain such local $W^{1,p}$ estimates used e.g. in \cite{ByunLp}, \cite{Weighted12} or \cite{ByunReifenberg} is as follows. One approximates the gradient of the weak solution $u$ of (\ref{76}) in $L^2$ by the gradient of a weak solution $v$ of a suitable equation $\textnormal{div} (B_0 \nabla u)=0$, where $B_0$ has constant coefficients. One then uses the fact that $v$ satisfies a local $C^{0,1}$ estimate along with a real-variable argument based on the Vitali covering lemma, the Hardy-Littlewood maximal function and an alternative characterization of $L^p$ spaces in order to obtain an $L^p$ estimate for $\nabla u$, which in view of interpolation then implies the desired local $W^{1,p}$ estimate. 

The main idea of our approach in the nonlocal setting is to apply similar arguments with the gradient $\nabla u$ replaced by the nonlocal s-gradient $\nabla^s u$ defined in (\ref{nonlocgrad}). However, due to the nonlocal nature of the operator $\nabla^s$ and the equations we consider, in our setting we have to overcome a number of difficulties that are not present in the local case. \nolinebreak
\subsection{Some notation}
For convenience, let us fix some notation which we use throughout the paper. By $C$ and $C_i$, $i \in \mathbb{N}$, we always denote positive constants, while dependences on parameters of the constants will be shown in parentheses. As usual, by
$$ B_r(x_0):= \{x \in \mathbb{R}^n \mid |x-x_0|<r \}$$
we denote the open ball with center $x_0 \in \mathbb{R}^n$ and radius $r>0$. Moreover, if $E \subset \mathbb{R}^n$ is measurable, then by $|E|$ we denote the $n$-dimensional Lebesgue-measure of $E$. If $0<|E|<\infty$, then for any $u \in L^1(E)$ we define
$$ \overline u_{E}:= \dashint_{E} u(x)dx := \frac{1}{|E|} \int_{E} u(x)dx.$$

\section{Some tools from real analysis}

In this section, we discuss some results from real analysis that will play key roles in our treatment of the $H^{s,p}$ regularity theory for nonlocal elliptic equations.

The following result can be proved by using the well-known Vitali covering lemma, cf. \cite[Theorem 2.7]{ByunLp}.

\begin{lem} \label{modVitali}
Assume that $E$ and $F$ are measurable sets in $\mathbb{R}^n$ that satisfy $E \subset F \subset B_1$. Assume further that 
there exists some $\varepsilon \in (0,1)$ such that 
$$
|E| < \varepsilon |B_1|,
$$
and that for all $x \in B_1$ and any $r \in (0,1)$ with $|E \cap B_r(x)| \geq \varepsilon |B_r(x)|$ we have 
$$B_r(x) \cap B_1 \subset F. $$
Then we have 
$$
|E| \leq 10^n \varepsilon |F|.
$$
\end{lem}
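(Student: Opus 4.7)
The plan is to run a Vitali-type stopping argument in the spirit of Calder\'on-Zygmund. For almost every $x\in E$ (those satisfying the Lebesgue differentiation theorem) the map $r\mapsto g_x(r):=|E\cap B_r(x)|/|B_r(x)|$ is continuous on $(0,\infty)$ with $\lim_{r\to 0^+}g_x(r)=1>\varepsilon$, while at $r=1$ one has $g_x(1)\le |E|/|B_1|<\varepsilon$. I would therefore define
$$ r_x:=\sup\{r\in(0,1] : g_x(r)\ge \varepsilon\}\in(0,1),$$
so that $g_x(r_x)=\varepsilon$ by continuity and $g_x(r)<\varepsilon$ for every $r\in(r_x,1]$. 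Since $g_x(r_x)\ge\varepsilon$, the hypothesis of the lemma applied to the ball $B_{r_x}(x)$ yields $B_{r_x}(x)\cap B_1\subset F$.

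Next, I would apply the Vitali covering lemma to the family $\{B_{r_x}(x)\}$, whose radii are bounded by $1$, to extract a pairwise disjoint subfamily $\{B_{r_i}(x_i)\}_{i\in I}$ (with $r_i=r_{x_i}$) such that $E\subset\bigcup_i B_{5 r_i}(x_i)$ up to a set of measure zero.

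The core estimate is $|E\cap B_{5 r_i}(x_i)|\le 5^n\varepsilon |B_{r_i}(x_i)|$, which I would prove by cases. If $5r_i\le 1$, then $5r_i\in(r_i,1]$, so the stopping property gives $g_{x_i}(5r_i)<\varepsilon$ and hence $|E\cap B_{5r_i}(x_i)|<5^n\varepsilon |B_{r_i}(x_i)|$. If $5r_i>1$, then $r_i>1/5$ forces $|B_1|\le 5^n |B_{r_i}(x_i)|$, so $|E\cap B_{5r_i}(x_i)|\le |E|<\varepsilon |B_1|\le 5^n\varepsilon |B_{r_i}(x_i)|$. Summing over $i$,
$$ |E|\le\sum_i |E\cap B_{5 r_i}(x_i)|\le 5^n\varepsilon \sum_i |B_{r_i}(x_i)|. $$

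The final ingredient is the elementary dimensional fact that $|B_r(y)\cap B_1|\ge 2^{-n}|B_r(y)|$ whenever $y\in B_1$ and $r\le 1$. Indeed, if $|y|\le 1-r/2$ the concentric ball $B_{r/2}(y)$ already lies in $B_1$; otherwise, placing the center at $y^*:=(1-r/2)\,y/|y|$ produces a ball $B_{r/2}(y^*)$ contained in $B_1\cap B_r(y)$, as one verifies from $|y^*|=1-r/2$ and $|y^*-y|=|y|-(1-r/2)<r/2$. Combined with the disjointness of $\{B_{r_i}(x_i)\}$ and $B_{r_i}(x_i)\cap B_1\subset F$, this gives
$$ \sum_i |B_{r_i}(x_i)|\le 2^n\sum_i |B_{r_i}(x_i)\cap B_1|\le 2^n|F|, $$
and substituting yields $|E|\le 10^n\varepsilon|F|$. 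The main obstacle is essentially bookkeeping: setting up the stopping time so that the strict density bound $g_x<\varepsilon$ really holds on $(r_x,1]$, checking that the density argument still works when the Vitali-expanded radius $5r_i$ exceeds $1$ (this is where the hypothesis $|E|<\varepsilon|B_1|$ enters), and handling the boundary-geometry case $|x_i|>1-r_i/2$ when comparing $|B_{r_i}(x_i)|$ to $|B_{r_i}(x_i)\cap B_1|$.
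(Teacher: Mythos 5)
Your proof is correct, and it follows exactly the route the paper indicates: the paper does not prove Lemma \ref{modVitali} itself but refers to the Vitali-covering-lemma argument of \cite[Theorem 2.7]{ByunLp}, which is precisely your stopping-radius construction combined with the $5r$-covering lemma and the estimate $|B_r(y)\cap B_1|\geq 2^{-n}|B_r(y)|$, yielding the constant $5^n\cdot 2^n=10^n$. All steps, including the case $5r_i>1$ where the hypothesis $|E|<\varepsilon|B_1|$ is used, check out.
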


Another tool we use is the Hardy-Littlewood maximal function.

\begin{defin}
Let $f \in L^1_{loc}(\mathbb{R}^n)$. Then the Hardy-Littlewood maximal function \newline $\mathcal{M} f: \mathbb{R}^n \to [0,\infty]$ of $f$ is defined by 
$$ \mathcal{M} f(x):=\mathcal{M} (f)(x) := \sup_{\rho>0} \dashint_{B_\rho(x)} |f(y)|dy .$$
Moreover, for any domain $\Omega \subset \mathbb{R}^n$ and any function $f \in L^1(\Omega)$, consider the zero extension of $f$ to $\mathbb{R}^n$  
$$ f_{\Omega} (x) := \begin{cases} 
f(x) \text{, if } x \in \Omega \\
0 \quad  \text{ , if } x \notin \Omega.
\end{cases} $$
We then define 
$$ \mathcal{M}_{\Omega} f := \mathcal{M} f_\Omega. $$
\end{defin}

Rather straightforward but important features of the Hardy-Littlewood maximal function are its scaling and translation invariance, given by the following Lemma which can be proved by using a change of variables.
\begin{lem} \label{scale}
Let $f \in L^1_{loc}(\mathbb{R}^n)$, $r>0$ and $y \in \mathbb{R}^n$. Then for the function $f_{r,y}(x):=f(rx+y)$ and any $x \in \mathbb{R}^n$ we have 
$$\mathcal{M} f_{r,y}(x) = \mathcal{M} f(rx+y) .$$
Similarly, for any domain $\Omega \subset \mathbb{R}^n$, any function $f \in L^1(\Omega)$ and any $x \in \Omega$ we have 
$$\mathcal{M}_{ \Omega^\prime} f_{r,y}(x) = \mathcal{M}_\Omega f(rx+y), $$
where $\Omega^\prime:= \{ \frac{x-y}{r} \mid x \in \Omega \}$. 
\end{lem}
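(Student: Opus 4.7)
The plan is to prove both identities by a direct change of variables, handling the unweighted case first and then reducing the $\Omega$-version to it.

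For the first identity, I would start from the definition
$$\mathcal{M} f_{r,y}(x) = \sup_{\rho > 0} \dashint_{B_\rho(x)} |f(ru+y)|\,du$$
and perform the substitution $v = ru + y$ inside each averaged integral. Since $u \in B_\rho(x)$ if and only if $v \in B_{r\rho}(rx+y)$, and $du = r^{-n}dv$, the volume factor $|B_\rho(x)| = r^{-n}|B_{r\rho}(rx+y)|$ cancels the Jacobian exactly, leaving
$$\dashint_{B_\rho(x)} |f(ru+y)|\,du = \dashint_{B_{r\rho}(rx+y)} |f(v)|\,dv.$$
Since $r > 0$ is fixed, as $\rho$ ranges over $(0,\infty)$ so does $r\rho$, so taking the supremum on both sides yields $\mathcal{M} f(rx+y)$, proving the first claim.

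For the second identity, the key observation is that the operations of zero extension and of the affine substitution $x \mapsto rx+y$ commute. More precisely, I would verify pointwise that $(f_\Omega)_{r,y} = (f_{r,y})_{\Omega'}$ on $\mathbb{R}^n$: indeed, $(f_\Omega)_{r,y}(u) = f_\Omega(ru+y)$ equals $f(ru+y) = f_{r,y}(u)$ when $ru+y \in \Omega$, i.e.\ $u \in \Omega'$, and equals $0$ otherwise, which is the definition of $(f_{r,y})_{\Omega'}$. Applying the already-established first identity to the $L^1_{loc}(\mathbb{R}^n)$ function $f_\Omega$ then gives
$$\mathcal{M}_{\Omega'} f_{r,y}(x) = \mathcal{M}((f_{r,y})_{\Omega'})(x) = \mathcal{M}((f_\Omega)_{r,y})(x) = \mathcal{M}(f_\Omega)(rx+y) = \mathcal{M}_\Omega f(rx+y),$$
as desired.

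There is no real obstacle here; the lemma is essentially a bookkeeping statement about how the Jacobian $r^n$ in the substitution is absorbed by the volume normalization in the average, combined with the trivial identity $(f_\Omega)_{r,y} = (f_{r,y})_{\Omega'}$. The only thing to be slightly careful about is the direction of the substitution defining $\Omega'$, namely $\Omega' = \{(z-y)/r : z \in \Omega\}$, which is precisely the preimage of $\Omega$ under the affine map $u \mapsto ru+y$, ensuring the indicator functions match up correctly.
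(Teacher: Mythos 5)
Your proof is correct and follows exactly the route the paper indicates (the paper gives no detailed argument, only noting that the lemma "can be proved by using a change of variables"): the substitution $v=ru+y$ turning averages over $B_\rho(x)$ into averages over $B_{r\rho}(rx+y)$, together with the identity $(f_\Omega)_{r,y}=(f_{r,y})_{\Omega'}$, is precisely the intended argument. Nothing is missing.
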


We remark that for any $f \in L^1_{loc}(\mathbb{R}^n)$, $\mathcal{M} f$ is Lebesgue-measurable.
Intuitively, the Hardy-Littlewood maximal function of a function $f$ in general seems to be much larger than the function $f$ itself. However, the following results show that when measured appropriately, the size of $\mathcal{M} f$ can actually be controlled by the size of $f$, cf. \cite{SteinSingular}.
\begin{thm} \label{Maxfun} 
Let $\Omega \subset \mathbb{R}^n$ be a domain.
\begin{enumerate}[label=(\roman*)]
\item (weak 1-1 estimate) If $f \in L^1(\Omega)$ and $t>0$, then 
$$
| \{x \in \Omega \mid \mathcal{M}_\Omega(f)(x) > t \}| \leq \frac{C}{t} \int_{\Omega} |f| dx, 
$$
where $C=C(n)>0$.
\item (strong p-p estimates) If $f \in L^p(\Omega)$ for some $p \in (1,\infty]$, then 
$$
||f||_{L^p(\Omega)} \leq ||\mathcal{M}_\Omega f||_{L^p(\Omega)} \leq C ||f||_{L^p(\Omega)}, 
$$
where $C=C(n,p)>0$.
\item If $f \in L^p(\Omega)$ for some $p \in [1,\infty]$, then the function $\mathcal{M}_\Omega f$ is finite almost everywhere.
\end{enumerate}
\end{thm}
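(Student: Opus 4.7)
The plan is to adapt the classical proofs of these estimates to the variant $\mathcal{M}_\Omega$; since $\mathcal{M}_\Omega f = \mathcal{M} f_\Omega$ and $\|f_\Omega\|_{L^p(\mathbb{R}^n)} = \|f\|_{L^p(\Omega)}$, the argument reduces to the standard case on $\mathbb{R}^n$ applied to the zero extension $f_\Omega$.

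For part (i), I would fix $t > 0$ and set $E_t := \{x \in \mathbb{R}^n \mid \mathcal{M} f_\Omega(x) > t\}$. For each $x \in E_t$, by definition of the supremum there exists a radius $r_x > 0$ with $\int_{B_{r_x}(x)} |f_\Omega| \, dy > t |B_{r_x}(x)|$, which forces $r_x \leq C(n)(\|f\|_{L^1(\Omega)}/t)^{1/n}$; in particular the radii are uniformly bounded. The Vitali covering lemma then yields a disjoint subfamily $\{B_{r_{x_i}}(x_i)\}$ with $E_t \subset \bigcup_i B_{5 r_{x_i}}(x_i)$, so
$$|E_t| \leq 5^n \sum_i |B_{r_{x_i}}(x_i)| \leq \frac{5^n}{t} \sum_i \int_{B_{r_{x_i}}(x_i)} |f_\Omega| \, dy \leq \frac{5^n}{t}\|f\|_{L^1(\Omega)},$$
using the disjointness of the $B_{r_{x_i}}(x_i)$ in the final step.

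For part (ii), the lower bound $\|f\|_{L^p(\Omega)} \leq \|\mathcal{M}_\Omega f\|_{L^p(\Omega)}$ is an immediate consequence of the Lebesgue differentiation theorem, which gives $|f(x)| \leq \mathcal{M}_\Omega f(x)$ for a.e. $x \in \Omega$. For $1 < p < \infty$, the upper bound follows by Marcinkiewicz interpolation between (i) and the trivial $L^\infty$ bound $\|\mathcal{M}_\Omega f\|_{L^\infty(\Omega)} \leq \|f\|_{L^\infty(\Omega)}$. Explicitly, one splits $f = f_1 + f_2$ with $f_1 := f \chi_{\{|f| > t/2\}}$, notes that $\{\mathcal{M}_\Omega f > t\} \subset \{\mathcal{M}_\Omega f_1 > t/2\}$, and inserts the weak 1-1 bound from (i) into the distribution-function identity
$$\|\mathcal{M}_\Omega f\|_{L^p(\Omega)}^p = p \int_0^\infty t^{p-1} \bigl|\{x \in \Omega \mid \mathcal{M}_\Omega f(x) > t\}\bigr| \, dt,$$
obtaining the desired estimate after applying Fubini's theorem. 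The case $p = \infty$ is immediate. Part (iii) is then a by-product: for $p = \infty$ it is trivial, for $1 < p \leq \infty$ a.e.\ finiteness follows from $\mathcal{M}_\Omega f \in L^p(\Omega)$, and for $p = 1$ from the weak 1-1 estimate by sending $t \to \infty$ in $|\{\mathcal{M}_\Omega f = \infty\}| \leq \frac{C}{t}\|f\|_{L^1(\Omega)}$.

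There is no serious obstacle here—the result is entirely classical and follows Stein's exposition in \cite{SteinSingular}. The only minor technical point worth flagging is verifying that the selection radii in the Vitali step are uniformly bounded, which is exactly where the hypothesis $f \in L^1(\Omega)$ enters; working with the zero extension $f_\Omega$ keeps the proof independent of any regularity assumption on $\partial\Omega$.
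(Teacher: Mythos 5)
Your argument is correct and is exactly the classical proof the paper relies on: the paper gives no proof of Theorem \ref{Maxfun} but simply cites \cite{SteinSingular}, and your reduction to the zero extension $f_\Omega$, the Vitali covering argument for the weak $1$--$1$ bound, and the splitting/Marcinkiewicz interpolation for the strong $p$--$p$ bound reproduce Stein's standard exposition. (Incidentally, your proof yields $C=C(n,p)$ in part (ii), confirming that the paper's stated dependence $C(n,s)$ is a typo.)
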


We conclude this section by giving an alternative characterization of $L^p$ spaces, cf. \cite[Lemma 7.3]{CaffFully}. It can be proved by using the well-known formula
$$ ||f||^p_{L^p(\Omega)} = p \int_0^{\infty} t^{p-1} \left | \left \{ x \in \Omega \mid f(x) > t \right \} \right | dt. $$
\begin{lem} \label{Caff}
Let $0< p<\infty$. Furthermore, suppose that $f$ is a nonnegative and measurable function in a bounded domain $\Omega \subset \mathbb{R}^n$ and let $\tau > 0$, $\beta>1$ be constants. Then for
$$ S:= \sum_{k = 1}^{\infty}{\beta^{kp} | \{ x \in \Omega \mid f(x) > \tau \beta^k \} | } , $$
we have
$$
C^{-1} S \leq ||f||^p_{L^p(\Omega)} \leq C (|\Omega|+S)
$$
for some constant $C=C(\tau,\beta,p) > 0$.
In particular, we have $f \in L^p(\Omega)$ if and only if $S<\infty$.
\end{lem}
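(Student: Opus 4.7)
The plan is to start from the layer cake identity
$$\|f\|_{L^p(\Omega)}^p = p \int_0^\infty t^{p-1} \mu(t)\, dt, \qquad \mu(t) := |\{x \in \Omega \mid f(x) > t\}|,$$
and compare the integral with the sum $S$ by partitioning $(0,\infty)$ along the geometric sequence $\tau\beta^k$, $k \geq 0$, together with the initial interval $(0,\tau)$. The key facts I will use are that $\mu$ is nonincreasing, that $\mu(t) \leq |\Omega|$ for every $t > 0$, and the elementary computation
$$\int_{\tau\beta^k}^{\tau\beta^{k+1}} t^{p-1}\, dt = \frac{\tau^p(\beta^p-1)}{p}\,\beta^{kp}.$$

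For the lower bound $S \leq C\|f\|_{L^p(\Omega)}^p$, I restrict the integral to the intervals $[\tau\beta^k,\tau\beta^{k+1}]$ for $k \geq 0$ and use monotonicity in the form $\mu(t) \geq \mu(\tau\beta^{k+1})$ on each such interval. After summation and a single index shift, the right-hand side becomes $\frac{\tau^p(\beta^p-1)}{p\beta^p} \sum_{k=1}^\infty \beta^{kp}\mu(\tau\beta^k)$, which multiplied by $p$ gives exactly $S$ up to the multiplicative constant $\tau^p(\beta^p-1)\beta^{-p}$ depending only on $\tau, \beta, p$.

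For the upper bound $\|f\|_{L^p(\Omega)}^p \leq C(|\Omega| + S)$, I split the integral as
$$\int_0^\infty t^{p-1}\mu(t)\,dt = \int_0^\tau t^{p-1}\mu(t)\,dt + \sum_{k=0}^\infty \int_{\tau\beta^k}^{\tau\beta^{k+1}} t^{p-1}\mu(t)\,dt,$$
bound $\mu(t)$ on $(0,\tau)$ by $|\Omega|$, and on $[\tau\beta^k,\tau\beta^{k+1}]$ by $\mu(\tau\beta^k)$. The first piece is at most $\frac{\tau^p}{p}|\Omega|$, and the $k=0$ term in the sum is controlled by $\frac{\tau^p(\beta^p-1)}{p}|\Omega|$ via $\mu(\tau) \leq |\Omega|$; the remaining $k \geq 1$ terms assemble precisely into $\frac{\tau^p(\beta^p-1)}{p}S$. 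Multiplying by $p$ and collecting the constants yields the claim with $C = \tau^p \max\{1, \beta^p - 1\}\cdot 2$ (or similar), depending only on $\tau,\beta,p$. The final "in particular" statement is then immediate from the two inequalities together with the fact that $|\Omega| < \infty$.

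There is essentially no obstacle beyond careful index bookkeeping in the geometric partition and keeping track of the shift between $\beta^k$ and $\beta^{k+1}$; the only mildly delicate point is remembering to handle the initial interval $(0,\tau)$ and the $k=0$ term separately by invoking $\mu \leq |\Omega|$, since these pieces produce the $|\Omega|$ term on the right that cannot be absorbed into $S$.
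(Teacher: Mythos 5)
Your argument is correct and is exactly the route the paper indicates: the layer cake formula $\|f\|_{L^p(\Omega)}^p = p\int_0^\infty t^{p-1}|\{f>t\}|\,dt$ combined with a geometric partition of $(0,\infty)$ along $\tau\beta^k$, monotonicity of the distribution function, and the bound $\mu(t)\le|\Omega|$ for the initial pieces. The bookkeeping (index shift for the lower bound, separate treatment of $(0,\tau)$ and the $k=0$ term for the upper bound) is handled properly, so nothing is missing.
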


\section{Fractional Sobolev spaces and the s-gradient}
We start this section by defining a first type of fractional Sobolev spaces which is probably the most widely used type of such spaces in the literature concerned with elliptic equations.
\begin{defin}
Let $\Omega \subset \mathbb{R}^n$ be a domain. For $p \in [1,\infty)$ and $s \in (0,1)$, we define the Sobolev-Slobodeckij space
$$W^{s,p}(\Omega):=\left \{u \in L^p(\Omega) \mathrel{\Big|} \int_{\Omega} \int_{\Omega} \frac{|u(x)-u(y)|^p}{|x-y|^{n+sp}}dydx<\infty \right \}$$
with norm
$$ ||u||_{W^{s,p}(\Omega)} := \left (\int_{\Omega}|u(x)|^p dx + \int_{\Omega} \int_{\Omega} \frac{|u(x)-u(y)|^p}{|x-y|^{n+sp}}dydx \right )^{1/p} .$$
Moreover, we define the corresponding local Sobolev-Slobodeckij spaces by
$$ W^{s,p}_{loc}(\Omega):= \left \{ u \in L^p_{loc}(\Omega) \mid u \in W^{s,p}(\Omega^\prime) \text{ for any domain } \Omega^\prime \subset \subset \Omega \right \}.$$
Finally, set
$$ H^s(\Omega):=W^{s,2}(\Omega).$$
\end{defin}
\begin{remark} \normalfont
	The space $H^s(\Omega)$ is a Hilbert space with respect to the inner product
	\begin{equation} \label{inner}
	 (u,v)_{H^s(\Omega)}:= (u,v)_{L^2(\Omega)} + \int_{\Omega} \int_{\Omega} \frac{(u(x)-u(y))(v(x)-v(y))}{|x-y|^{n+2s}}dydx. 
	 \end{equation}
\end{remark}

We use the following fractional Poincar\'e inequality, cf. \cite[Lemma 3.10]{DRV}.
\begin{lem} \label{Poincare}
Let $\Omega \subset \mathbb{R}^n$ be a bounded Lipschitz domain and $s \in (0,1)$. For any $u \in H^s(\Omega)$ we have
$$ \int_{\Omega} \left | u(x)- \overline u_{\Omega} \right |^2 dx \leq C \int_{\Omega} \int_{\Omega} \frac{|u(x)-u(y)|^2}{|x-y|^{n+2s}}dydx,$$
where $C=C(s,\Omega)>0$.
\end{lem}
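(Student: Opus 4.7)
The plan is to give a direct and elementary argument that does not really need the Lipschitz hypothesis; that hypothesis is inherited from the compactness-style proof in the cited reference, but the inequality in fact holds for any bounded domain with a constant depending on $|\Omega|$ and $\operatorname{diam}(\Omega)$.

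First I would rewrite $u(x)-\overline{u}_\Omega$ as an average of differences:
\begin{equation*}
u(x)-\overline{u}_\Omega = \frac{1}{|\Omega|}\int_\Omega \bigl(u(x)-u(y)\bigr)\,dy.
\end{equation*}
Applying Jensen's inequality (equivalently, Cauchy--Schwarz) pointwise in $x$ then yields
\begin{equation*}
\bigl|u(x)-\overline{u}_\Omega\bigr|^2 \le \frac{1}{|\Omega|}\int_\Omega \bigl(u(x)-u(y)\bigr)^2 dy.
\end{equation*}

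Next I would integrate this bound in $x$ over $\Omega$ and insert the kernel $|x-y|^{-(n+2s)}$ by multiplying and dividing, bounding $|x-y|^{n+2s}\le \bigl(\operatorname{diam}\Omega\bigr)^{n+2s}$ since both $x$ and $y$ range over $\Omega$:
\begin{equation*}
\int_\Omega |u(x)-\overline{u}_\Omega|^2 dx \le \frac{(\operatorname{diam}\Omega)^{n+2s}}{|\Omega|}\int_\Omega\!\int_\Omega \frac{(u(x)-u(y))^2}{|x-y|^{n+2s}}\,dy\,dx,
\end{equation*}
which is the desired inequality with $C=(\operatorname{diam}\Omega)^{n+2s}/|\Omega|$.

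There is no real obstacle here; the only subtlety is recognizing that Jensen's inequality transfers the mean oscillation into a pairwise difference, after which the kernel is harmless because the diameter of $\Omega$ is finite. If instead one prefers a proof that tracks the Lipschitz dependence more naturally, the alternative route is a contradiction argument: assume the inequality fails along a sequence $u_k$ with $\overline{(u_k)}_\Omega=0$, $\|u_k\|_{L^2(\Omega)}=1$ and vanishing Gagliardo seminorm; on a bounded Lipschitz domain the compact embedding $H^s(\Omega)\hookrightarrow L^2(\Omega)$ would give an $L^2$-limit $u$ with zero seminorm, hence constant, hence zero by the mean condition, contradicting $\|u\|_{L^2(\Omega)}=1$. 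This is the only place Lipschitz regularity enters, and it can be avoided by the direct argument above.
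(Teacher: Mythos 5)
Your direct argument is correct: the identity $u(x)-\overline u_{\Omega}=\frac{1}{|\Omega|}\int_\Omega (u(x)-u(y))\,dy$, Cauchy--Schwarz in $y$, and the pointwise bound $1\le (\operatorname{diam}\Omega)^{n+2s}|x-y|^{-(n+2s)}$ for $x,y\in\Omega$ immediately give the inequality with $C=(\operatorname{diam}\Omega)^{n+2s}/|\Omega|$, and this constant even has the natural scaling (it behaves like $r^{2s}$ under $\Omega\mapsto r\Omega$). Note, however, that the paper does not prove this lemma at all: it simply cites \cite[Lemma 3.10]{DRV}, so your proposal differs from the paper in being self-contained rather than in competing with an in-text argument. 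What your route buys is generality and explicitness: the Lipschitz hypothesis is indeed superfluous, since the proof only uses that $\Omega$ is a bounded open set of positive finite measure and finite diameter (connectivity is also irrelevant, because the Gagliardo kernel couples all pairs of points), and in the paper the lemma is only ever applied on balls, so nothing is lost. Your alternative compactness sketch is also sound, but it genuinely needs the Lipschitz (or at least extension/compact-embedding) hypothesis for $H^s(\Omega)\hookrightarrow\hookrightarrow L^2(\Omega)$, requires lower semicontinuity of the seminorm along the subsequence, and produces no explicit constant, so the direct computation is preferable here.
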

We also use the following type of fractional Sobolev spaces.
\begin{defin}
For $p \in [1,\infty)$ and $s \in \mathbb{R}$, consider the Bessel potential space
$$ H^{s,p}(\mathbb{R}^n):=\left \{u \in L^p(\mathbb{R}^n) \mid \mathcal{F}^{-1} \left [ \left ( 1+|\xi|^2 \right )^{\frac{s}{2}} \mathcal{F} f \right ] \in L^p(\mathbb{R}^n) \right \} ,$$
where $\mathcal{F}$ denotes the Fourier transform and $\mathcal{F}^{-1}$ denotes the inverse Fourier transform. We equip $H^{s,p}(\mathbb{R}^n)$ with the norm
$$ ||u||_{H^{s,p}(\mathbb{R}^n)} := \left | \left |\mathcal{F}^{-1} \left [ \left ( 1+|\xi|^2 \right )^{\frac{s}{2}} \mathcal{F} f \right ] \right | \right |_{L^p(\mathbb{R}^n)}.$$
Moreover, for any domain $\Omega \subset \mathbb{R}^n$ we define 
$$H^{s,p}(\Omega):= \left \{ u \big|_\Omega \mid u \in H^{s,p}(\mathbb{R}^n) \right \} $$
with norm 
$$ ||u||_{H^{s,p}(\Omega)} := \inf \left \{ ||v||_{H^{s,p}(\mathbb{R}^n)} \mid v \big |_\Omega = u \right \}.$$
Furthermore, we define the corresponding local Bessel potential spaces by 
$$ H^{s,p}_{loc}(\Omega) := \left \{ u \in L^p_{loc}(\Omega) \mid u \in H^{s,p}(\Omega^\prime) \text{ for any domain } \Omega^\prime \subset \subset \Omega \right \}.$$   
\end{defin}

The following result gives some relations between Bessel potential spaces and Sobolev-Slobodeckij spaces.

\begin{prop} \label{Sobolevrelations}
Let $\Omega \subset \mathbb{R}^n$ be a domain.
\begin{enumerate} [label=(\roman*)]
\item If $\Omega$ is a bounded Lipschitz domain or $\Omega=\mathbb{R}^n$, then for all $s \in (0,1)$, $p \in (1,2]$ we have $W^{s,p}(\Omega) \hookrightarrow H^{s,p}(\Omega)$.
\item For any $s \in (0,1)$ and any $p \in [2,\infty)$ we have $ H^{s,p}(\Omega) \hookrightarrow W^{s,p}(\Omega)$.
\end{enumerate}
\end{prop}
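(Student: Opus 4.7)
The plan is to reduce both inclusions to the case $\Omega=\mathbb{R}^n$, and then to prove the two $\mathbb{R}^n$ embeddings by identifying the Bessel potential spaces and the Sobolev--Slobodeckij spaces with appropriate Triebel--Lizorkin spaces.

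For (ii), no extension theorem is needed, because $H^{s,p}(\Omega)$ is by definition just the space of restrictions of elements of $H^{s,p}(\mathbb{R}^n)$. Given $u \in H^{s,p}(\Omega)$, I would pick $v \in H^{s,p}(\mathbb{R}^n)$ with $v|_\Omega = u$ a.e.\ and $\|v\|_{H^{s,p}(\mathbb{R}^n)} \le 2 \|u\|_{H^{s,p}(\Omega)}$. Once the embedding $H^{s,p}(\mathbb{R}^n) \hookrightarrow W^{s,p}(\mathbb{R}^n)$ is available for $p \in [2,\infty)$, monotonicity of integration gives $\|u\|_{W^{s,p}(\Omega)} \le \|v\|_{W^{s,p}(\mathbb{R}^n)} \le C\|v\|_{H^{s,p}(\mathbb{R}^n)} \le 2C \|u\|_{H^{s,p}(\Omega)}$, which settles (ii) for any domain.

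For (i), the strategy is the symmetric one, but now an extension operator is required. The Lipschitz assumption enters precisely here: by Stein's extension theorem (or its fractional version due to Rychkov) a bounded Lipschitz domain admits a bounded linear operator $E : W^{s,p}(\Omega) \to W^{s,p}(\mathbb{R}^n)$ with $Eu|_\Omega = u$. Applying $E$, the embedding $W^{s,p}(\mathbb{R}^n) \hookrightarrow H^{s,p}(\mathbb{R}^n)$ for $p \in (1,2]$, and restriction to $\Omega$ gives $u \in H^{s,p}(\Omega)$ with a controlled norm.

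It then remains to prove the two embeddings on $\mathbb{R}^n$, which is the heart of the proposition. I would identify $H^{s,p}(\mathbb{R}^n) = F^{s}_{p,2}(\mathbb{R}^n)$ and $W^{s,p}(\mathbb{R}^n) = F^{s}_{p,p}(\mathbb{R}^n)$ for $s \in (0,1)$, $p \in (1,\infty)$, both of which are classical facts (see Triebel's monographs). The standard monotonicity of Triebel--Lizorkin spaces in the fine index, $F^{s}_{p,q_1}(\mathbb{R}^n) \hookrightarrow F^{s}_{p,q_2}(\mathbb{R}^n)$ whenever $q_1 \le q_2$, then yields $F^{s}_{p,p} \hookrightarrow F^{s}_{p,2}$ when $p \le 2$ and $F^{s}_{p,2} \hookrightarrow F^{s}_{p,p}$ when $p \ge 2$, which are exactly the two embeddings needed. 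The only potentially delicate ingredient is the $W^{s,p}$-extension operator on Lipschitz domains; since the rest consists of well-known identifications from Littlewood--Paley theory, I would simply cite these facts rather than reprove them here.
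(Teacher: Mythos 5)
Your proposal is correct and follows essentially the same route as the paper: reduction to $\mathbb{R}^n$ by a fractional extension theorem for $W^{s,p}$ on bounded Lipschitz domains in part (i) and by the very definition of $H^{s,p}(\Omega)$ as a restriction space in part (ii), followed by the whole-space embeddings. Your derivation of the $\mathbb{R}^n$ case from the identifications $H^{s,p}(\mathbb{R}^n)=F^{s}_{p,2}(\mathbb{R}^n)$, $W^{s,p}(\mathbb{R}^n)=F^{s}_{p,p}(\mathbb{R}^n)$ and monotonicity in the fine index is just the Littlewood--Paley formulation of the classical result of Stein (Chapter V, Theorem 5 of \emph{Singular Integrals and Differentiability Properties of Functions}) that the paper cites directly.
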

For a proof of Proposition \ref{Sobolevrelations}, we refer to Theorem 5 in chapter \rom{5} of \cite{SteinSingular} for the case when $\Omega = \mathbb{R}^n$. For general domains $\Omega$, part $(i)$ then follows by extending an arbitrary function $u \in W^{s,p}(\Omega)$ to a function that belongs to $W^{s,p}(\mathbb{R}^n)$, for which an additional assumption on $\Omega$ is required, cf. \cite[Theorem 5.4]{Hitch}. Part $(ii)$ follows similarly by extending an arbitrary function $u \in H^{s,p}(\Omega)$ to a function that belongs to $H^{s,p}(\mathbb{R}^n)$, which by definition of $H^{s,p}(\Omega)$ is possible for arbitrary domains.

We now define a function that can be viewed as a nonlocal analogue to the euclidean norm of the gradient of a function in the local context.
\begin{defin}
Let $s \in (0,1)$. For any domain $\Omega \subset \mathbb{R}^n$ and any measurable function $u:\Omega \to \mathbb{R}$, we define the s-gradient $\nabla^s_\Omega u:\Omega \to [0,\infty]$ by 
$$ \nabla^s_\Omega u(x):= \left ( \int_{_\Omega} \frac{(u(x)-u(y))^2}{|x-y|^{n+2s}}dy \right )^{\frac{1}{2}}.$$
Moreover, for any measurable function $u:\mathbb{R}^n \to \mathbb{R}$ we define $\nabla^s u:=\nabla^s_{\mathbb{R}^n} u$.
\end{defin}

In view of Proposition \ref{Sobolevrelations}, for any bounded Lipschitz domain $\Omega$ we have $u \in H^{s,2}(\Omega)$ if and only if $u \in L^2(\Omega)$ and $\nabla^s_\Omega u \in L^2(\Omega)$.
The following result shows that a similar alternative characterization of Bessel potential spaces in terms of the $s$-gradient is also true for a much wider range of exponents $p$.
This characterization was first given by Stein in \cite{SteinBessel} in the case when $\Omega=\mathbb{R}^n$. For the case when $\Omega$ is an arbitrary Lipschitz domain we refer to \cite[Theorem 1.3]{Soler} or \cite[Theorem 2.10]{Soler1}, where this characterization is proved in the more general context of Triebel-Lizorkin spaces and so-called uniform domains.
\begin{thm} \label{altcharBessel}
Let $s \in (0,1)$, $p \in \left (\frac{2n}{n+2s},\infty \right )$ and assume that $\Omega \subset \mathbb{R}^n$ is a bounded Lipschitz domain or that $\Omega=\mathbb{R}^n$. Then we have $u \in H^{s,p}(\Omega)$ if and only if $u \in L^p(\Omega)$ and $\nabla^s_\Omega u \in L^p(\Omega)$. Moreover, we have
$$ ||u||_{H^{s,p}(\Omega)} \simeq ||u||_{L^p(\Omega)} + ||\nabla^s_\Omega u||_{L^p(\Omega)} $$
in the sense of equivalent norms.
\end{thm}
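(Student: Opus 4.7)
The plan is to first establish the characterization on $\mathbb{R}^n$ via Stein's Littlewood--Paley approach, and then transfer it to bounded Lipschitz domains by means of a suitable extension operator.

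For $\Omega=\mathbb{R}^n$, I would first reduce the Bessel norm to its homogeneous counterpart: $\|u\|_{H^{s,p}(\mathbb{R}^n)}\simeq \|u\|_{L^p(\mathbb{R}^n)}+\|(-\Delta)^{s/2}u\|_{L^p(\mathbb{R}^n)}$, which follows from the Mihlin multiplier theorem applied to the symbol $(1+|\xi|^2)^{s/2}-|\xi|^s-1$. It then remains to compare $\|(-\Delta)^{s/2}u\|_{L^p}$ with $\|\nabla^s u\|_{L^p}$. To this end I would introduce Stein's $g$-function
$$ g_s(u)(x)=\left(\int_0^\infty \left| t^{1-s}\tfrac{\partial}{\partial t}P_t u(x)\right|^2\frac{dt}{t}\right)^{1/2}, $$
where $P_t$ is the Poisson semigroup. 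The classical Littlewood--Paley theorem yields $\|g_s(u)\|_{L^p}\simeq \|(-\Delta)^{s/2}u\|_{L^p}$ for all $1<p<\infty$. A pointwise comparison between $\nabla^s u$ and a Stein $g^*_\lambda$-type maximal expression, combined with the vector-valued $L^p$ boundedness of the latter, then delivers the equivalence $\|\nabla^s u\|_{L^p}\simeq \|(-\Delta)^{s/2}u\|_{L^p}$ in the announced range of $p$.

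For a bounded Lipschitz domain $\Omega$, the direction $u\in H^{s,p}(\Omega)\Rightarrow\nabla^s_\Omega u\in L^p(\Omega)$ is immediate: by definition of $H^{s,p}(\Omega)$ there exists an extension $\tilde u\in H^{s,p}(\mathbb{R}^n)$ of $u$, the $\mathbb{R}^n$ characterization gives $\nabla^s\tilde u\in L^p(\mathbb{R}^n)$, and one has the pointwise bound $\nabla^s_\Omega u(x)\leq \nabla^s\tilde u(x)$ on $\Omega$. The converse requires an extension operator $E\colon L^p(\Omega)\to L^p(\mathbb{R}^n)$ that maps functions with $\nabla^s_\Omega u\in L^p(\Omega)$ to functions with $\nabla^s(Eu)\in L^p(\mathbb{R}^n)$; combined with the $\mathbb{R}^n$ characterization, this yields $Eu\in H^{s,p}(\mathbb{R}^n)$ and hence $u\in H^{s,p}(\Omega)$ by definition.

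The main obstacle is the low-$p$ regime $p<2$, where standard Calderón--Zygmund arguments fail near $p=1$; it is precisely the threshold $p>\tfrac{2n}{n+2s}$ that guarantees the right decay so that the vector-valued singular integrals underlying the $g^*_\lambda$ estimate remain bounded at the required weight. A second technical point is the construction of an $s$-gradient-compatible extension on Lipschitz domains, for which one would adopt the Triebel--Lizorkin framework of the cited work in order to intertwine the intrinsic difference-quotient norm with its global counterpart.
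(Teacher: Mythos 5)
The paper does not actually prove Theorem \ref{altcharBessel}: it is quoted from the literature, namely from Stein \cite{SteinBessel} for $\Omega=\mathbb{R}^n$ and from \cite[Theorem 2.10]{Soler} (Triebel--Lizorkin spaces on uniform domains) for bounded Lipschitz domains, so your sketch follows essentially the same route as the sources the paper relies on. Two remarks on the sketch itself. First, on $\mathbb{R}^n$ your outline is the classical Stein argument, and you correctly locate the threshold: the domination of $\nabla^s u$ by a $g^*_\lambda$-type function with $\lambda=\frac{n+2s}{n}$ forces $p\lambda>2$, i.e. $p>\frac{2n}{n+2s}$, while the reverse inequality works for all $1<p<\infty$; but note that the multiplier reduction should be phrased for the quotient symbols such as $(1+|\xi|^2)^{s/2}/(1+|\xi|^s)$ rather than for the difference $(1+|\xi|^2)^{s/2}-|\xi|^s-1$, and the pointwise comparison with $g^*_\lambda$ is asserted rather than carried out. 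Second, in the domain case the easy direction is indeed immediate from $\nabla^s_\Omega u\le \nabla^s\tilde u$ on $\Omega$, but the converse is the real content of the theorem: one needs an extension operator bounded from the intrinsic space $\{u\in L^p(\Omega):\nabla^s_\Omega u\in L^p(\Omega)\}$ into $H^{s,p}(\mathbb{R}^n)$, and merely postulating such an operator would be circular --- its construction (a Jones/Whitney-type extension for uniform domains, or equivalently the intrinsic characterization of $F^s_{p,2}$ restrictions) is precisely what \cite{Soler} provides. So your proposal is a correct outline of the standard proof, but as written it rests on the same two external results the paper cites instead of proving them.
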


We remark that the above result holds in particular for any $p \geq 2$.

Even though we primarily work in some domain $\Omega \subset \mathbb{R}^n$, we obtain most results in this work in terms of the global s-gradient $\nabla^s$ instead of the localized s-gradient $\nabla^s_\Omega$, which is mostly due to the nonlocal character of the equations we consider. In order to state our main result in domains in an optimal way (cf. Theorem \ref{mainint3}), we therefore also define the following natural nonstandard function spaces.
\begin{defin}
Let $\Omega \subset \mathbb{R}^n$ be a domain. For $p \in [1,\infty)$ and $s \in (0,1)$, we define the linear space
$$H^{s,p}(\Omega | \mathbb{R}^n):=\left \{u:\mathbb{R}^n \to \mathbb{R} \text{ measurable } \mid u \in L^p(\Omega) \text{ and } \nabla^s u \in L^p(\Omega) \right \}.$$
Moreover, we define the corresponding local spaces by 
$$ H^{s,p}_{loc}(\Omega | \mathbb{R}^n) = \left \{ u:\mathbb{R}^n \to \mathbb{R} \text{ measurable } \mid u \in H^{s,p}(\Omega^\prime | \mathbb{R}^n) \text{ for any domain } \Omega^\prime \subset \subset \Omega \right \}.$$
Also, we use the spaces
$$ H^{s,p}_0(\Omega | \mathbb{R}^n):= \left \{ u \in H^{s,p}(\Omega | \mathbb{R}^n) \mid u =0 \text{ a.e. in } \mathbb{R}^n \setminus \Omega \right \}.$$
Furthermore, set $$H^s(\Omega | \mathbb{R}^n):=H^{s,2}(\Omega | \mathbb{R}^n), \text{ } H^s_{loc}(\Omega | \mathbb{R}^n):=H^{s,2}_{loc}(\Omega | \mathbb{R}^n) \text{ and } H^s_0(\Omega | \mathbb{R}^n):=H^{s,2}_0(\Omega | \mathbb{R}^n).$$
\end{defin}
%Note that the definitions of $H^s(\Omega | \mathbb{R}^n)$ and $H^s_0(\Omega | \mathbb{R}^n)$ match the corresponding definitions from the introduction.
\begin{remark} \normalfont
Since for any $u \in H^{s}_0(\Omega | \mathbb{R}^n)$ we have
$$ \int_{\mathbb{R}^n}u(x)^2 dx + \int_{\mathbb{R}^n} \int_{\mathbb{R}^n} \frac{(u(x)-u(y))^2}{|x-y|^{n+2s}}dydx \leq \int_{\Omega}u(x)^2 dx + 2 \int_{\Omega}(\nabla^s u(x))^2 dx < \infty,$$
$H^{s}_0(\Omega | \mathbb{R}^n)$ clearly is a closed subspace of $H^s(\mathbb{R}^n)$ and thus also a Hilbert space with respect to the inner product $(u,v)_{H^s(\mathbb{R}^n)}$ defined in (\ref{inner}).
\end{remark}
\begin{remark} \normalfont
In view of Theorem \ref{altcharBessel}, for any bounded Lipschitz domain $\Omega \subset \mathbb{R}^n$ and all $s \in (0,1)$, $p \in \left (\frac{2n}{n+2s},\infty \right )$ we have the inclusions
$$ H^{s,p}(\mathbb{R}^n) \subset H^{s,p}(\Omega | \mathbb{R}^n) \subset H^{s,p}(\Omega).$$
In the case when $\Omega \subset \mathbb{R}^n$ is an arbitrary domain this implies the inclusions
$$ H^{s,p}(\mathbb{R}^n) \subset H^{s,p}_{loc}(\Omega | \mathbb{R}^n) \subset H^{s,p}_{loc}(\Omega).$$
\end{remark}
We also use the following embedding theorems of Bessel potential spaces. Parts $(i)$ and $(ii)$ follow from \cite[Remark 1.96 $(iii)$]{Triebel}, while the last two parts follow from the corresponding embeddings of Sobolev-Slobodeckij spaces (cf. \cite{Hitch}) and part $(ii)$ of Proposition \ref{Sobolevrelations}.
\begin{thm} \label{BesselEmbedding}
Let $1<p \leq p_1 < \infty$, $s,s_1 \geq 0$ and assume that $\Omega \subset \mathbb{R}^n$ is a domain.
\begin{enumerate} [label=(\roman*)]
\item If $sp<n$, then for any $q \in [p,\frac{np}{n-sp}]$ we have
$$ H^{s,p}(\Omega) \hookrightarrow L^{q}(\Omega).$$
\item More generally, if $ s - \frac{n}{p} = s_1 - \frac{n}{p_1}, $ then
$$ H^{s,p}(\Omega) \hookrightarrow H^{s_1,p_1}(\Omega).$$
\item If $sp = n$, then for any $q \in [p,\infty)$ we have
$$ H^{s,p}(\Omega) \hookrightarrow L^{q}(\Omega).$$
\item If $sp > n$, then we have $$H^{s,p}(\Omega) \hookrightarrow C^\alpha(\Omega),$$
where $\alpha = s-\frac{n}{p}$.
\end{enumerate}
\end{thm}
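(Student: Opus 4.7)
My plan would be to reduce every assertion to the case $\Omega = \mathbb{R}^n$ and then transfer to a general domain using the restriction definition $H^{s,p}(\Omega) = \{\tilde u|_\Omega : \tilde u \in H^{s,p}(\mathbb{R}^n)\}$. Concretely, given $u \in H^{s,p}(\Omega)$ and $\varepsilon > 0$, one picks $\tilde u \in H^{s,p}(\mathbb{R}^n)$ with $\tilde u|_\Omega = u$ and $\|\tilde u\|_{H^{s,p}(\mathbb{R}^n)} \leq \|u\|_{H^{s,p}(\Omega)} + \varepsilon$, applies the desired embedding on $\mathbb{R}^n$, and restricts the result back to $\Omega$. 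Taking the infimum over extensions then yields the quantitative bound, and this procedure works uniformly for target spaces $L^q$, $H^{s_1,p_1}$, and $C^\alpha$, since each of these is compatible with restriction.

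For (i) and (ii) on $\mathbb{R}^n$, the natural approach is to use the representation $H^{s,p}(\mathbb{R}^n) = G_s \ast L^p(\mathbb{R}^n)$, where $G_s$ is the Bessel kernel associated with $(I-\Delta)^{-s/2}$. Writing $u = G_s \ast f$ with $f \in L^p(\mathbb{R}^n)$, and using that $G_{s-s_1} = G_s \ast G_{-s_1}^{-1}$ acts like a Riesz potential of order $s - s_1$ modulo an exponentially decaying tail, the Hardy–Littlewood–Sobolev theorem gives boundedness from $L^p$ to $L^{p_1}$ whenever $s - n/p = s_1 - n/p_1$, which is exactly (ii); part (i) then follows by taking $s_1 = 0$, $p_1 = np/(n-sp)$ and interpolating with the trivial inclusion at $q = p$. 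This is exactly the content of the Triebel reference and one could alternatively just cite it directly.

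For (iii) and (iv), the cleanest route is via part (ii) of Proposition \ref{Sobolevrelations}, which for $p \geq 2$ supplies $H^{s,p}(\Omega) \hookrightarrow W^{s,p}(\Omega)$; combining this with the classical Sobolev–Slobodeckij embeddings $W^{s,p} \hookrightarrow L^q$ for $sp = n$ and $W^{s,p} \hookrightarrow C^\alpha$ for $sp > n$ from the Hitchhiker's guide closes both cases. The main obstacle I would expect is the regime $1 < p < 2$, where the embedding $H^{s,p} \hookrightarrow W^{s,p}$ fails and this shortcut is unavailable. In that regime one would instead iterate (ii) once to raise the integrability above $2$ while preserving the critical exponent $s - n/p$, and then apply the $p \geq 2$ arguments, or alternatively invoke the sharper Triebel–Lizorkin scale directly. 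With that case handled, the transfer to general $\Omega$ proceeds via extension-restriction exactly as above.
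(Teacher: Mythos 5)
Your route is essentially the paper's: parts (i) and (ii) are exactly the content of the cited Triebel reference (your Bessel-kernel/Hardy--Littlewood--Sobolev sketch is just a proof of that citation), parts (iii) and (iv) are obtained, as in the paper, from part (ii) of Proposition \ref{Sobolevrelations} together with the Sobolev--Slobodeckij embeddings of \cite{Hitch}, and the extension--restriction transfer to general $\Omega$ is implicit in the paper's definition of $H^{s,p}(\Omega)$. Your additional remark on the regime $1<p<2$ (handled by first applying (ii) to raise integrability) is a sensible refinement of a point the paper passes over silently, but it does not change the approach.
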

\section{Some preliminary regularity results}
For the rest of this paper, we fix real numbers $s \in (0,1)$ and $\lambda \geq 1$.
\subsection{$L^\infty$ estimates}
The following Lemma relates the nonlocal tail of a function that often appears naturally in the literature to the $L^2$ norm of its $s$-gradient.
\begin{lem} \label{tailestimate}
For all $r,R>0$ and any $u \in H^s(B_R | \mathbb{R}^n)$ we have
\begin{equation} \label{cool}
\int_{\mathbb{R}^n \setminus B_r} \frac{u(y)^2}{|y|^{n+2s}}dy \leq C(||\nabla^s u||_{L^2(B_R)}^2+||u||_{L^2(B_R)}^2) ,
\end{equation}
where $C=C(n,s,r,R)>0$.
\end{lem}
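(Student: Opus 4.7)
The plan is to control $u(y)^2$ pointwise for $y \in \mathbb{R}^n \setminus B_r$ by averaging against $u(x)$ on a small ball $B_\rho$ contained in $B_R$, where $\rho$ is chosen small enough relative to $r$ so that $|y|$ and $|x-y|$ are comparable whenever $x \in B_\rho$ and $y \in \mathbb{R}^n \setminus B_r$. This lets me trade the tail weight $|y|^{-(n+2s)}$ for the interaction weight $|x-y|^{-(n+2s)}$, which is exactly the weight appearing in $\nabla^s u$.

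More precisely, I would set $\rho := \min\{R, r/2\} > 0$. Then $B_\rho \subset B_R$, and for every $x \in B_\rho$ and every $y \in \mathbb{R}^n \setminus B_r$ the elementary estimates $|x-y| \geq |y|-|x| \geq |y| - r/2 \geq |y|/2$ and $|x-y| \leq |x|+|y| \leq 3|y|/2$ give the comparability $|y|^{n+2s} \simeq |x-y|^{n+2s}$ with a universal constant.

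Starting from $u(y)^2 \leq 2u(x)^2 + 2(u(x)-u(y))^2$ and averaging in $x$ over $B_\rho$ produces, for every $y \in \mathbb{R}^n \setminus B_r$,
$$u(y)^2 \leq \frac{2}{|B_\rho|} \int_{B_\rho} u(x)^2\, dx + \frac{2}{|B_\rho|} \int_{B_\rho} (u(x)-u(y))^2\, dx.$$
Multiplying by $|y|^{-(n+2s)}$ and integrating in $y$ over $\mathbb{R}^n \setminus B_r$, the first summand is bounded by $C(n,s,r,R) \|u\|_{L^2(B_R)}^2$ because $\int_{\mathbb{R}^n \setminus B_r} |y|^{-(n+2s)}\, dy = C(n,s) r^{-2s} < \infty$. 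For the second summand I would apply Fubini and use $|y|^{-(n+2s)} \leq C|x-y|^{-(n+2s)}$ on the region under consideration, yielding
$$\frac{C}{|B_\rho|} \int_{B_\rho} \int_{\mathbb{R}^n \setminus B_r} \frac{(u(x)-u(y))^2}{|x-y|^{n+2s}}\, dy\, dx \leq \frac{C}{|B_\rho|} \int_{B_\rho} (\nabla^s u(x))^2\, dx \leq C(n,s,r,R) \|\nabla^s u\|_{L^2(B_R)}^2.$$

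There is no real obstacle here; the only point requiring care is the choice of the auxiliary radius $\rho$, which must simultaneously satisfy $\rho \leq R$ (so $B_\rho \subset B_R$ and the averages are controlled by data on $B_R$) and $\rho \leq r/2$ (so the comparability $|y|\sim|x-y|$ holds). The resulting factor $|B_\rho|^{-1}$ depends only on $n,r,R$ and is absorbed into $C$.
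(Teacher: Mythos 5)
Your proposal is correct and is essentially the paper's own argument: both bound $u(y)^2$ by averaging in $x$ over a ball inside $B_R$, trade the weight $|y|^{-(n+2s)}$ for $|x-y|^{-(n+2s)}$ via the elementary comparison $|x-y|\leq C|y|$, and then use Fubini together with $\int_{\mathbb{R}^n\setminus B_r}|y|^{-(n+2s)}dy<\infty$ to control the two resulting terms by $\|\nabla^s u\|_{L^2(B_R)}^2$ and $\|u\|_{L^2(B_R)}^2$. The only cosmetic difference is that the paper averages over all of $B_R$ (using $|x-y|\leq(R/r+1)|y|$), so your auxiliary radius $\rho=\min\{R,r/2\}$ is not needed, though it is harmless.
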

\begin{proof}
First of all, integration in polar coordinates yields
\begin{equation} \label{intpolar1}
\int_{\mathbb{R}^n \setminus B_r} \frac{dz}{|z|^{n+2s}} = \omega_n \int_r^\infty \frac{\rho^{n-1}}{\rho^{n+2s}}d\rho = \frac{\omega_n}{2s r^{2s}}=:C_1<\infty,
\end{equation}
where $\omega_n$ denotes the surface area of the $n-1$ dimensional unit sphere $\mathbb{S}^{n-1}$. By the Cauchy-Schwarz inequality, Fubini's theorem and (\ref{intpolar1}) we have 
\begingroup
\allowdisplaybreaks
\begin{align*}
\int_{\mathbb{R}^n \setminus B_r} \frac{u(y)^2}{|y|^{n+2s}}dy & = \int_{\mathbb{R}^n \setminus B_r} \frac{\left (u(y)- \dashint_{B_R} u(x)dx + \dashint_{B_R} u(x)dx \right )^2}{|y|^{n+2s}}dy \\
& \leq 2 \left ( \int_{\mathbb{R}^n \setminus B_r} \frac{\left ( \dashint_{B_R} (u(x)-u(y)) dx \right )^2}{|y|^{n+2s}}dy + \int_{\mathbb{R}^n \setminus B_r} \frac{\left ( \dashint_{B_R} u(x) dx \right )^2}{|y|^{n+2s}}dy \right ) \\
& \leq 2 \left (  \int_{\mathbb{R}^n \setminus B_r} \dashint_{B_R} \frac{ (u(x)-u(y))^2 }{|y|^{n+2s}}dxdy + \int_{\mathbb{R}^n \setminus B_r} \frac{\dashint_{B_R} u^2(x) dx}{|y|^{n+2s}}dy \right ) \\
& = \frac{2}{|B_R|} \left ( \int_{B_R} \int_{\mathbb{R}^n \setminus B_r} \frac{ (u(x)-u(y))^2 }{|y|^{n+2s}}dydx + C_1 \int_{B_R} u^2(x) dx \right ).
\end{align*}
\endgroup
Moreover, since for any $x \in B_R$ and any $y \in \mathbb{R}^n \setminus B_r$ we have
$$ |x-y| \leq |x|+|y| < R+|y| = \left ( \frac{R}{|y|}+1 \right ) |y| \leq \left ( \frac{R}{r}+1 \right )|y|,$$
we see that 
$$ \int_{B_R} \int_{\mathbb{R}^n \setminus B_r} \frac{ (u(x)-u(y))^2 }{|y|^{n+2s}}dydx \leq C_2 \int_{B_R} \int_{\mathbb{R}^n \setminus B_r} \frac{ (u(x)-u(y))^2 }{|x-y|^{n+2s}}dydx \leq C_2 \int_{B_R} (\nabla^s u)^2(x) dx,$$
where $C_2=\left ( \frac{R}{r}+1 \right )^{n+2s}$. By combining the above computations, we see that (\ref{cool}) holds with $C=\frac{2}{|B_R|} \max\{C_2,C_1\}$.
\end{proof}

We also use the following local $L^\infty$ estimate for weak solutions to homogeneous nonlocal equations, cf. \cite[Theorem 1.1]{finnish}. We remark that although in \cite{finnish} the below result is stated under the stronger assumption that $u \in H^s(\mathbb{R}^n)$, an inspection of the proof shows that this is not necessary.
\begin{thm} \label{finnish}
Consider a kernel coefficient $A \in \mathcal{L}_0(\lambda)$. For all $0<r<R<\infty$ and any weak solution $u \in H^s(B_R | \mathbb{R}^n)$ of the equation 
$$ L_A u = 0 \text{ in } B_R$$
we have the estimate
$$ ||u||_{L^\infty(B_r)} \leq C \left (\int_{\mathbb{R}^n \setminus B_r} \frac{|u(y)|}{|y|^{n+2s}}dy + ||u||_{L^2(B_R)} \right ),$$
where $C=C(n,s,r,R,\lambda)>0$.
\end{thm}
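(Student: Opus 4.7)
The plan is to carry out a De Giorgi iteration adapted to the nonlocal setting, applied in turn to $u$ and to $-u$ to obtain the two-sided $L^\infty$ bound. I would fix geometrically decreasing radii $r_j := r + (R-r)2^{-j}$ and levels $k_j := K(1-2^{-j})$ with $K>0$ a parameter to be chosen later, and monitor the quantity
\[ A_j := \int_{B_{r_j}} (u-k_j)_+^2 \, dx. \]
The goal is to drive $A_j \to 0$, which forces $u \leq K$ a.e.\ on $B_r$; at the end $K$ will be taken as a sufficiently large multiple of the right-hand side of the claimed inequality.

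The engine of the iteration is a nonlocal Caccioppoli-type inequality. Choose a smooth cutoff $\eta \in C_c^\infty(B_{r_j})$ with $\eta \equiv 1$ on $B_{r_{j+1}}$ and $|\nabla \eta| \lesssim 2^j/(R-r)$, and test the weak form of $L_A u = 0$ with $\varphi := (u-k_{j+1})_+ \eta^2 \in H^s_0(B_R \mid \mathbb{R}^n)$. Using the symmetry and ellipticity of $A$ together with the pointwise inequality $(a-b)\bigl((a-k)_+ - (b-k)_+\bigr) \geq \bigl((a-k)_+ - (b-k)_+\bigr)^2$, the double integral splits into an interior piece on $B_{r_j} \times B_{r_j}$ yielding the $s$-energy of $w := (u-k_{j+1})_+$ and a cross piece on $B_{r_j} \times (\mathbb{R}^n \setminus B_{r_j})$. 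Since $k_{j+1} \geq 0$ and $B_{r_j} \supset B_r$, the cross term is dominated uniformly in $j$ by the fixed tail $\mathrm{T} := \int_{\mathbb{R}^n \setminus B_r} \frac{|u(y)|}{|y|^{n+2s}}\,dy$. After absorbing the $\nabla \eta$ terms this yields
\[ \int_{B_{r_{j+1}}} \bigl(\nabla^s_{B_{r_j}} w\bigr)^2 dx \leq C\, 4^{\,j}(R-r)^{-2s} \int_{B_{r_j}} w^2\, dx + C\, \bigl\lvert \{u > k_{j+1}\} \cap B_{r_j}\bigr\rvert\, \mathrm{T}^2. \]

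Combining this with the fractional Sobolev embedding $H^s(B_{r_j}) \hookrightarrow L^{2^\ast_s}(B_{r_j})$, where $2^\ast_s = \frac{2n}{n-2s}$, and H\"older's inequality on the superlevel set $\{u > k_{j+1}\} \cap B_{r_{j+1}}$ (whose measure is controlled by $A_j/(k_{j+1}-k_j)^2 = 4^{\,j+1} A_j/K^2$ via Chebyshev), I would derive a superlinear recursion
\[ A_{j+1} \leq C\, b^{\,j}\, K^{-\gamma} A_j^{\,1+\delta}, \]
with $b > 1$ and $\gamma, \delta > 0$ depending only on $n, s, \lambda, r, R$, where the tail contribution is absorbed once one takes $K \geq \mathrm{T}$. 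A standard fast-geometric-convergence lemma then forces $A_j \to 0$ provided $A_0$ lies below an explicit negative power of $K$. Choosing $K := C_\ast \bigl( \lVert u \rVert_{L^2(B_R)} + \mathrm{T} \bigr)$ with $C_\ast$ large enough meets that threshold, since $A_0 \leq \lVert u \rVert_{L^2(B_R)}^2 \leq K^2/C_\ast^2$.

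The step I expect to be the main obstacle is the derivation of the Caccioppoli inequality with tail and iteration constants that are genuinely compatible: one must show that the cross interaction from $B_{r_j} \times (\mathbb{R}^n \setminus B_{r_j})$ can be controlled by the \emph{fixed} quantity $\mathrm{T}$ uniformly in $j$ (for which the monotonicity $k_j \geq 0$ and the inclusion $B_{r_j} \supset B_r$ are essential), and one must absorb the geometric factor $4^{\,j}$ arising from $|\nabla \eta|$ and from the spacing of the levels into the base $b$ of the recursion, so that the nonlinear gain $A_j^{1+\delta}$ still drives the sequence to zero. Once the recursion is established, the iteration closes, and running the same argument with $u$ replaced by $-u$ yields the matching lower bound and completes the proof.
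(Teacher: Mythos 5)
The paper does not prove this statement itself: it imports it from \cite[Theorem 1.1]{finnish} (Di Castro--Kuusi--Palatucci), adding only the remark that the hypothesis $u\in H^s(\mathbb{R}^n)$ there can be weakened to $u\in H^s(B_R\,|\,\mathbb{R}^n)$ by inspecting the proof. Your De Giorgi iteration with the nonlocal tail is essentially that proof, and the outline is sound: nonlocal Caccioppoli inequality with tail, fractional Sobolev embedding (which uses $n>2s$, assumed in the paper), Chebyshev on the level sets, a fast-geometric-convergence lemma, applied to $\pm u$, with $K$ a large multiple of $\|u\|_{L^2(B_R)}+\mathrm{T}$; the homogeneity of the resulting recursion ($\gamma=2\delta$ with $\delta=2s/n$) is exactly what makes the threshold condition on $A_0$ compatible with that choice of $K$. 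One correction to the wording: the cross term is \emph{not} dominated by $\mathrm{T}$ uniformly in $j$. Since the support of $\eta$ can only be kept at distance of order $2^{-j}(R-r)$ from $\partial B_{r_j}$, the kernel comparison for $x$ in the support of $\eta$ and $y\notin B_{r_j}$ reads $|x-y|^{-n-2s}\le C\,2^{j(n+2s)}|y|^{-n-2s}$, so the tail enters with a factor growing like $2^{j(n+2s)}$; your monotonicity $k_{j+1}\ge 0$ and the inclusion $\mathbb{R}^n\setminus B_{r_j}\subset\mathbb{R}^n\setminus B_r$ do give the bound by $\mathrm{T}$, but only up to this geometric factor. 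This is harmless, as it is precisely the kind of $b^{\,j}$ factor you already plan to absorb into the base of the superlinear recursion, but the claim of uniformity should be dropped.
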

By combining the above two results, we obtain the following.
\begin{cor} \label{supest}
Consider a kernel coefficient $A \in \mathcal{L}_0(\lambda)$. For all $0<r<R<\infty$ and any weak solution $u \in H^s(B_R | \mathbb{R}^n)$ of the equation 
$$ L_A u = 0 \text{ in } B_R$$
we have the estimate
\begin{equation} \label{gradest81}
||u||_{L^\infty(B_r)} \leq C(||\nabla^s u||_{L^2(B_R)}+||u||_{L^2(B_R)}),
\end{equation}
where $C=C(n,s,r,R,\lambda)>0$.
\end{cor}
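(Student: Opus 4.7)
The corollary follows by chaining the two preceding results, with Cauchy--Schwarz bridging the gap between the $L^1$-type tail appearing in Theorem \ref{finnish} and the $L^2$-type tail controlled by Lemma \ref{tailestimate}. The plan is first to apply Theorem \ref{finnish} to the weak solution $u$ of $L_A u = 0$ in $B_R$, which gives
$$||u||_{L^\infty(B_r)} \leq C\left(\int_{\mathbb{R}^n \setminus B_r} \frac{|u(y)|}{|y|^{n+2s}}\,dy + ||u||_{L^2(B_R)}\right),$$
with $C = C(n,s,r,R,\lambda)$.

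Next I would factor the weight $|y|^{-(n+2s)}$ as $|y|^{-(n+2s)/2} \cdot |y|^{-(n+2s)/2}$ and apply the Cauchy--Schwarz inequality to obtain
$$\int_{\mathbb{R}^n \setminus B_r} \frac{|u(y)|}{|y|^{n+2s}}\,dy \leq \left(\int_{\mathbb{R}^n \setminus B_r} \frac{u(y)^2}{|y|^{n+2s}}\,dy\right)^{1/2}\left(\int_{\mathbb{R}^n \setminus B_r} \frac{dy}{|y|^{n+2s}}\right)^{1/2}.$$
The second factor is a finite constant depending only on $n$, $s$, and $r$, as already computed in polar coordinates in the proof of Lemma \ref{tailestimate}, so this step only costs an $r$-dependent constant.

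The remaining $L^2$-type tail is then handled directly by Lemma \ref{tailestimate}, which yields
$$\int_{\mathbb{R}^n \setminus B_r} \frac{u(y)^2}{|y|^{n+2s}}\,dy \leq C(||\nabla^s u||_{L^2(B_R)}^2 + ||u||_{L^2(B_R)}^2).$$
Taking a square root and inserting everything back into the estimate from Theorem \ref{finnish} produces \eqref{gradest81} after absorbing the various multiplicative constants into a new $C = C(n,s,r,R,\lambda)$. There is no real obstacle here; the only small subtlety is making sure the Cauchy--Schwarz step is applied so that the constant depends only on the permitted parameters, which is automatic since the $L^1$-to-$L^2$ conversion produces exactly a factor of the tail of $|y|^{-(n+2s)}$, a quantity computable from $n$, $s$, and $r$ alone.
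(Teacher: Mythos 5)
Your proposal is correct and follows essentially the same route as the paper: apply Theorem \ref{finnish}, convert the $L^1$-type tail to the $L^2$-type tail via Cauchy--Schwarz using the finite constant from \eqref{intpolar1}, and then invoke Lemma \ref{tailestimate}. No issues.
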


\begin{proof}
By Theorem \ref{finnish}, (\ref{intpolar1}) and Lemma \ref{tailestimate} we have
\begin{align*}
||u||_{L^\infty(B_r)} & \leq C_1 \left (\int_{\mathbb{R}^n \setminus B_r} \frac{|u(y)|}{|y|^{n+2s}}dy + ||u||_{L^2(B_R)} \right ) \\
& \leq C_1 \left (C_2^{\frac{1}{2}} \left ( \int_{\mathbb{R}^n \setminus B_r} \frac{u(y)^2}{|y|^{n+2s}}dy \right )^{\frac{1}{2}} + ||u||_{L^2(B_R)} \right ) \\
& \leq C_1 \left(C_2^{\frac{1}{2}}C_3^{\frac{1}{2}}+1 \right) \left (||\nabla^s u||_{L^2(B_R)}+||u||_{L^2(B_R)} \right ),
\end{align*}
where $C_1$ is given by Theorem \ref{finnish}, $C_2$ is given by (\ref{intpolar1}) and $C_3$ is given by Lemma \ref{tailestimate}. This proves (\ref{gradest81}) with $C=C_1 \left(C_2^{\frac{1}{2}}C_3^{\frac{1}{2}}+1 \right)$.
\end{proof}

\begin{cor} \label{taillinf}
Consider a kernel coefficient $A \in \mathcal{L}_0(\lambda)$. For all $0<r<R<\infty$ and any weak solution $u \in H^s(B_R | \mathbb{R}^n)$ of the equation 
$$ L_A u = 0 \text{ in } B_R$$
we have the estimate
\begin{equation} \label{gradest45}
||\nabla^s_{\mathbb{R}^n \setminus B_R} u||_{L^\infty(B_r)} \leq C||\nabla^s u||_{L^2(B_R)},
\end{equation}
where $C=C(n,s,r,R,\lambda)>0$.
\end{cor}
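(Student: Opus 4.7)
The target inequality has $\|\nabla^{s}u\|_{L^{2}(B_{R})}$ alone on the right-hand side, with no extra $\|u\|_{L^{2}(B_{R})}$ term, so the first move must be to normalize $u$ so that its $L^{2}$-norm is controlled by its $s$-gradient. The plan is to replace $u$ by $v := u - \overline{u}_{B_{R}}$. Since $L_{A}$ annihilates constants, $v$ is still a weak solution of $L_{A}v = 0$ in $B_{R}$; since the $s$-gradient and the truncated $s$-gradient depend only on differences $u(x)-u(y)$, we have $\nabla^{s}v = \nabla^{s}u$ and $\nabla^{s}_{\mathbb{R}^{n}\setminus B_{R}}v = \nabla^{s}_{\mathbb{R}^{n}\setminus B_{R}}u$ pointwise; and the fractional Poincaré inequality (Lemma \ref{Poincare}) applied on the bounded Lipschitz domain $B_{R}$, together with $\nabla^{s}_{B_{R}}u \le \nabla^{s}u$ pointwise, gives $\|v\|_{L^{2}(B_{R})} \le C(n,s,R)\,\|\nabla^{s}u\|_{L^{2}(B_{R})}$. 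Thus it suffices to establish the inequality of Corollary \ref{taillinf} for $v$ with $\|\nabla^{s}v\|_{L^{2}(B_{R})} + \|v\|_{L^{2}(B_{R})}$ on the right-hand side.

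For the pointwise bound on $\nabla^{s}_{\mathbb{R}^{n}\setminus B_{R}}v(x)$ with $x \in B_{r}$, I expand
\[
\bigl(\nabla^{s}_{\mathbb{R}^{n}\setminus B_{R}}v(x)\bigr)^{2} \;\leq\; 2v(x)^{2}\!\int_{\mathbb{R}^{n}\setminus B_{R}}\!\frac{dy}{|x-y|^{n+2s}} \;+\; 2\!\int_{\mathbb{R}^{n}\setminus B_{R}}\!\frac{v(y)^{2}}{|x-y|^{n+2s}}dy
\]
via $(a-b)^{2}\le 2a^{2}+2b^{2}$, and treat the two pieces separately. For the first, the distance bound $|x-y| \geq R - r$ (valid for $x\in B_{r}$, $|y|\geq R$) combined with integration in polar coordinates as in (\ref{intpolar1}) yields a constant $C(n,s,r,R)$. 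For the second, the same geometry gives $|x-y| \ge (1-r/R)|y|$, which converts the kernel $|x-y|^{-(n+2s)}$ into $|y|^{-(n+2s)}$ up to a constant; after this reduction, Lemma \ref{tailestimate} (applied on $\mathbb{R}^{n}\setminus B_{R}$ with ball $B_{R}$) controls the resulting tail by $C\bigl(\|\nabla^{s}v\|_{L^{2}(B_{R})}^{2}+\|v\|_{L^{2}(B_{R})}^{2}\bigr)$.

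The remaining term $v(x)^{2}$ is bounded by $\|v\|_{L^{\infty}(B_{r})}^{2}$, and Corollary \ref{supest} applied to $v$ (which is also a weak solution of $L_{A}v = 0$ in $B_{R}$) yields
\[
\|v\|_{L^{\infty}(B_{r})} \;\leq\; C\bigl(\|\nabla^{s}v\|_{L^{2}(B_{R})}+\|v\|_{L^{2}(B_{R})}\bigr).
\]
Assembling the two pieces, taking the supremum over $x \in B_{r}$, and then invoking the Poincaré estimate from the first paragraph to absorb $\|v\|_{L^{2}(B_{R})}$ into $\|\nabla^{s}u\|_{L^{2}(B_{R})}$ (and using $\|\nabla^{s}v\|_{L^{2}(B_{R})} = \|\nabla^{s}u\|_{L^{2}(B_{R})}$), yields (\ref{gradest45}).

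The only step requiring real thought is the normalization by the mean: without it the right-hand side would inevitably carry the $\|u\|_{L^{2}(B_{R})}$ leftover from Corollary \ref{supest} and from Lemma \ref{tailestimate}. Once one notices that subtracting $\overline{u}_{B_{R}}$ preserves both the equation and the $s$-gradient while allowing the fractional Poincaré inequality to absorb the $L^{2}$ norm, the rest is a straightforward geometric/tail computation.
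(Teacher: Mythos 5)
Your proposal is correct and follows essentially the same route as the paper: split the tail integral via $(a-b)^2\le 2a^2+2b^2$, convert the kernel to $|y|^{-(n+2s)}$ by the same elementary geometry, control the two pieces with Lemma \ref{tailestimate} and Corollary \ref{supest}, and eliminate the $L^2$ term by working with $u-\overline u_{B_R}$ (which still solves the equation) together with the fractional Poincar\'e inequality. The only difference is cosmetic — you normalize by the mean at the outset, while the paper derives the estimate for a general solution and then applies it to $u-\overline u_{B_R}$.
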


\begin{proof}
	For any $x \in B_r$ and any $y \in \mathbb{R}^n \setminus B_R$ we have
	$$|y| \leq |x-y|+|x| < |x-y|+R=\left(1+\frac{R}{|x-y|} \right )|x-y| \leq \left(1+\frac{R}{R-r} \right) |x-y|.$$
	For almost every $x \in B_r$, it follows that
	\begin{align*}
	& \int_{\mathbb{R}^n \setminus B_R} \frac{(u(x)-u(y))^2}{|x-y|^{n+2s}}dy \\
	\leq & C_1 \int_{\mathbb{R}^n \setminus B_R} \frac{(u(x)-u(y))^2}{|y|^{n+2s}}dy \\
	\leq & 2 C_1 \left ( \int_{\mathbb{R}^n \setminus B_R} \frac{u(x)^2}{|y|^{n+2s}}dy + \int_{\mathbb{R}^n \setminus B_R} \frac{u(y)^2}{|y|^{n+2s}}dy \right ) \\
	\leq & 2 C_1 \left (C_2 ||u||_{L^\infty(B_r)}^2 + C_3 \left ( \int_{B_R} \int_{\mathbb{R}^n} \frac{ (u(z)-u(y))^2}{|z-y|^{n+2s}}dydz + \int_{B_R} u^2(z) dz \right ) \right ) \\
	\leq & 2 C_1 (C_2 C_4+C_3) \left ( \int_{B_R} \int_{\mathbb{R}^n} \frac{ (u(z)-u(y))^2}{|z-y|^{n+2s}}dydz + \int_{B_R} u^2(z) dz \right ) ,
	\end{align*}
	where $C_1:=\left(1+\frac{R}{R-r} \right)^{n+2s}$, $C_2=C_2(n,s,R)$ is given as in (\ref{intpolar1}) in the proof of Lemma \ref{tailestimate}, while $C_3=C_3(n,s,R)$ is given by Lemma \ref{tailestimate} and $C_4=C_4(n,s,\lambda,r,R)$ is given by Corollary \ref{supest}. Set $C_5:=2 C_1 (C_2 C_4+C_3)$.
	Since the function $u- \overline u_{B_R} \in H^s(B_R | \mathbb{R}^n)$ also solves the equation 
	$$ L_{A}(u- \overline u_{B_R})=0 \text{ weakly in } B_R,$$
	the above estimate also applies to the function $u- \overline u_{B_R}$, so that together with the fractional Poincar\'e inequality (Lemma \ref{Poincare}) for almost every $x \in B_r$ we deduce
	\begin{align*}
	|\nabla^s_{\mathbb{R}^n \setminus B_R} u(x)|^2 & = \int_{\mathbb{R}^n \setminus B_R} \frac{(u(x)-u(y))^2}{|x-y|^{n+2s}}dy = \int_{\mathbb{R}^n \setminus B_R} \frac{((u(x)-\overline u_{B_R})-(u(y)-\overline u_{B_R}))^2}{|x-y|^{n+2s}}dy \\
	& \leq C_5 \left (\int_{B_R} \int_{\mathbb{R}^n} \frac{((u(z)-\overline u_{B_R})-(u(y)-\overline u_{B_R}))^2}{|z-y|^{n+2s}}dydz + \int_{B_R} (u(z)-\overline u_{B_R})^2dz \right ) \\
	& \leq C_5 \left (\int_{B_R} \int_{\mathbb{R}^n} \frac{(u(z)-u(y))^2}{|z-y|^{n+2s}}dydz + C_6 \int_{B_R} \int_{ B_R} \frac{(u(z)-u(y))^2}{|z-y|^{n+2s}}dydz \right ) \\
	& \leq C_7 ||\nabla^s u||^2_{L^2(B_R)} ,
	\end{align*}
	where $C_6=C_6(n,s,R)$ and $C_7:=C_5(1+C_6)$, which proves (\ref{gradest45}) with $C=C_7^{\frac{1}{2}}$.
\end{proof}
\subsection{Higher H\"older regularity}
In the basic case when $A \in \mathcal{L}_0(\lambda)$, it can be shown that any weak solution to the corresponding homogeneous nonlocal equation is $C^\alpha$ for some $\alpha>0$, cf. \cite[Theorem 1.2]{finnish}. 
The following result shows that if $A$ is of class $\mathcal{L}_1(\lambda)$, then weak solutions to the corresponding homogeneous nonlocal equation enjoy better H\"older regularity than in general.

\begin{thm} \label{modC2sreg}
	Consider a kernel coefficient $A \in \mathcal{L}_1(\lambda)$ and assume that $u \in H^s(B_5 | \mathbb{R}^n)$ is a weak solution of the equation $L_{A} u = 0$ in $B_5$. Then for any $0<\alpha<\min\{2s,1\}$ we have
	$$ [u]_{C^{\alpha}(B_3)} \leq C ||\nabla^s u||_{L^2(B_5)} ,$$
	where $C=C(n,s,\lambda,\alpha)>0$ and 
	$$[u]_{C^{\alpha}(B_3)}:=\sup_{\substack{_{x,y \in B_3}\\{x \neq y}}} \frac{|u(x)-u(y)|}{|x-y|^{\alpha}}.$$
\end{thm}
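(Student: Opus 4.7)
The plan is to carry out a difference-quotient bootstrap that exploits the translation invariance of $A$. Since constants lie in the kernel of $L_A$, I may replace $u$ by $u - \overline{u}_{B_5}$; combining the Poincar\'e inequality (Lemma \ref{Poincare}), Corollary \ref{supest}, and Lemma \ref{tailestimate} then produces the baseline bound $||u - \overline{u}_{B_5}||_{L^\infty(B_4)} \leq C||\nabla^s u||_{L^2(B_5)}$. The base of the induction is the De Giorgi--Nash--Moser type $C^{\alpha_0}$ estimate for homogeneous nonlocal equations proved in \cite[Theorem 1.2]{finnish}, valid for every $A \in \mathcal{L}_0(\lambda)$ with some small exponent $\alpha_0 = \alpha_0(n, s, \lambda) > 0$. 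Combining this estimate with the $L^\infty$ bound just established and with the tail estimate (Lemma \ref{tailestimate}) yields the initial step $[u]_{C^{\alpha_0}(B_4)} \leq C||\nabla^s u||_{L^2(B_5)}$.

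To upgrade $\alpha_0$ to any prescribed $\alpha < \min\{2s, 1\}$, I would use the translation invariance of $a$ decisively: for $h \in \mathbb{R}^n$ with $|h|$ small, the translate $u(\cdot + h)$ again solves $L_A w = 0$ in a shifted ball (precisely because $A(x,y) = a(x-y)$), so by linearity the first difference $v_h(x) := u(x+h) - u(x)$ is itself a weak solution of $L_A v_h = 0$ in $B_{5 - |h|}$. At stage $k$ of the induction --- with $u \in C^{\alpha_k}$ already known on some nested ball --- the trivial pointwise bound $||v_h||_{L^\infty} \leq [u]_{C^{\alpha_k}} |h|^{\alpha_k}$, inserted into the $C^{\alpha_0}$ estimate applied to $v_h$, morally gives $[v_h]_{C^{\alpha_0}} \leq C|h|^{\alpha_k}$. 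Specializing this bound to the pair $(x, x+h)$ yields the second-difference estimate $|u(x) - 2u(x+h) + u(x+2h)| \leq C|h|^{\alpha_k + \alpha_0}$, which by the classical Zygmund-class characterization of H\"older spaces is equivalent to $u \in C^{\alpha_k + \alpha_0}$ on a slightly smaller ball, as long as $\alpha_k + \alpha_0 \neq 1$. Finitely many iterations then raise the exponent past any $\alpha < \min\{2s, 1\}$, with the constants telescoping into a bound linear in $||\nabla^s u||_{L^2(B_5)}$.

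The main obstacle is the nonlocal tail in the $C^{\alpha_0}$ estimate applied to $v_h$: since $v_h(y) = u(y+h) - u(y)$ does not shrink with $|h|$ for $|y|$ large, a direct application of Lemma \ref{tailestimate} to $v_h$ produces a contribution of order $||\nabla^s u||_{L^2(B_5)}$ that is uniform in $|h|$, and this threatens to swamp the desired $|h|^{\alpha_k}$ gain. The remedy is to keep the tail as an absolute (but $|h|$-independent) additive constant absorbed into the prefactor $||\nabla^s u||_{L^2(B_5)}$ of the final estimate, rather than letting it compete against the $|h|$-gain; this requires a careful reinterpretation of the H\"older bound on $v_h$ as a ``small-increment'' bound, which is precisely where the translation invariance and linearity are exploited a second time. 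The natural ceiling $\min\{2s, 1\}$ reflects two distinct saturations: the threshold $2s$ corresponds to the intrinsic regularity limit for divergence-form fractional equations with merely measurable coefficients, while the threshold $1$ reflects the saturation of the first-difference characterization of H\"older seminorms at the Lipschitz regime, beyond which one would need to differentiate $u$ genuinely.
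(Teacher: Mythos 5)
Your reduction steps are fine and match the paper: subtracting the mean, then using Corollary \ref{supest}, Lemma \ref{tailestimate} and Lemma \ref{Poincare} to control the $L^\infty$ norm, the localized energy and the tail by $\|\nabla^s u\|_{L^2(B_5)}$ is exactly how the paper passes from its Theorem \ref{HiHol} to Theorem \ref{modC2sreg}, and your observation that translation invariance makes $u(\cdot+h)$, hence $v_h=u(\cdot+h)-u$, a solution is also the paper's starting point (it is how identity (\ref{trest2}) is derived). The genuine gap is in your improvement step. When you apply the interior $C^{\alpha_0}$ estimate of \cite[Theorem 1.2]{finnish} to $v_h$, the right-hand side necessarily contains the nonlocal tail of $v_h$, and, as you yourself note, this tail is of size $\sim\|\nabla^s u\|_{L^2(B_5)}$ uniformly in $|h|$, since $u$ has no modulus of continuity outside $B_5$. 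Consequently the best you can extract is $[v_h]_{C^{\alpha_0}}\leq C\bigl(|h|^{\alpha_k}[u]_{C^{\alpha_k}}+\|\nabla^s u\|_{L^2(B_5)}\bigr)$, so the second difference is only $O\bigl(|h|^{\alpha_0}\bigr)$ for small $|h|$, not $O\bigl(|h|^{\alpha_k+\alpha_0}\bigr)$; the Zygmund characterization then returns $C^{\alpha_0}$ and the induction never improves the exponent. Your proposed remedy, to ``absorb the tail as an $|h|$-independent additive constant into the prefactor,'' is not an argument: an additive term $\|\nabla^s u\|_{L^2(B_5)}\,|h|^{\alpha_0}$ cannot be fitted under a bound of the form $C|h|^{\alpha_{k+1}}$ with $\alpha_{k+1}>\alpha_0$, which is what the next induction step needs. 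A further warning sign is that your scheme, if it closed, would gain the fixed amount $\alpha_0$ at every step with no mechanism enforcing the ceiling $\min\{2s,1\}$ (your discussion of the ceiling is heuristic and appears nowhere in the iteration), so it would bootstrap merely measurable translation-invariant kernels to arbitrarily high regularity, which is not true.

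The paper avoids this trap by never applying a pointwise H\"older estimate to $v_h$. Instead it adapts \cite[Theorem 5.2]{BLS}: starting from the difference-quotient identity (\ref{trest2}), the gain is obtained through Caccioppoli-type energy estimates for $\delta_h u$ tested against compactly supported functions, iterated in Besov--Nikolskii norms of localized quantities; there the far field enters only through terms where $|x-y|$ is bounded below against a compactly supported test function, and the tail appears a single time, additively, in the final estimate of Theorem \ref{HiHol}, after which a Besov-to-H\"older embedding concludes. The alternative route indicated in the paper's remark achieves the same localization differently: one first proves the higher H\"older estimate for globally bounded solutions of an inhomogeneous equation $L_A u=f$ with $f\in L^\infty$ (as in \cite{NonlocalGeneral}, \cite{NonlocalNotes}), and then reduces to that case by a cutoff as in \cite[Corollary 2.4]{FracLap}, so that the far-field contribution is converted into a bounded right-hand side rather than a tail competing inside the iteration. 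If you want to rescue a difference-quotient bootstrap of your type, you must first perform such a localization (or work in integrated norms as in \cite{BLS}); as written, the induction step fails.
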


We will derive Theorem \ref{modC2sreg} from the following analogue of \cite[Theorem 5.2]{BLS}, where a corresponding result is proved for weak solutions to the fractional $p$-Laplace equation. 

\begin{thm} \label{HiHol}
	Consider a kernel coefficient $A \in \mathcal{L}_1(\lambda)$ and  assume that $u \in H^s(B_5 | \mathbb{R}^n)$ is a weak solution of the equation $L_{A} u = 0$ in $B_5$. Then for any $0<\alpha<\min\{2s,1\}$ we have
	$$ [u]_{C^{\alpha}(B_3)} \leq C \left (||u||_{L^\infty(B_4)} + \int_{B_4} \int_{B_4} \frac{|u(x)-u(y)|^2}{|x-y|^{n+2s}}dydx  + \int_{\mathbb{R}^n \setminus B_4} \frac{|u(y)|}{|y|^{n+2s}}dy \right ),$$
	where $C=C(n,s,\lambda,\alpha)>0$.
\end{thm}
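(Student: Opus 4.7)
The plan is to adapt the discrete-differentiation bootstrap of \cite[Theorem 5.2]{BLS}, developed there for the fractional $p$-Laplacian, to the present linear and translation-invariant setting. The essential advantage afforded by the assumption $A \in \mathcal{L}_1(\lambda)$ (as opposed to the larger class $\mathcal{L}_0(\lambda)$) is that $L_A$ commutes with translations, so that any finite difference $u_h(x) := u(x+h) - u(x)$ of a weak solution $u$ of $L_A u = 0$ is itself a weak solution of the same equation on a slightly smaller ball. Iterating the basic H\"older regularity from \cite[Theorem 1.2]{finnish} on these differences, together with the classical characterization of H\"older spaces via second-order finite differences, then yields the higher H\"older exponent claimed here.

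Let $M$ denote the right-hand side of the claimed inequality. The base case of the bootstrap is obtained by combining Theorem \ref{finnish} above with the $C^{\gamma_0}$-regularity from \cite[Theorem 1.2]{finnish}, applied on a chain of nested balls, giving $u \in C^{\gamma_0}(B_{9/2})$ for some $\gamma_0 = \gamma_0(n,s,\lambda) \in (0,1)$ with $||u||_{C^{\gamma_0}(B_{9/2})} \leq CM$. Translation invariance is now deployed: a change of variables in the double integral defining $\mathcal{E}_A$, exploiting $a(x-y) = a((x-h)-(y-h))$, shows that $u(\cdot + h)$ is a weak solution of $L_A(u(\cdot+h)) = 0$ in the corresponding translated ball, so that by linearity of $L_A$ the finite difference $u_h$ is a weak solution of $L_A u_h = 0$ in $B_{5-|h|}$.

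The inductive step is as follows: assume that $u \in C^\beta(B_r)$ with $\beta < \min\{2s,1\}$, $||u||_{C^\beta(B_r)} \leq CM$ and $3 < r \leq 9/2$. For $|h|$ small relative to $r-3$, apply the H\"older estimate of \cite[Theorem 1.2]{finnish} to $u_h$ as a weak solution on $B_{r-|h|}$. The hypothesis $u \in C^\beta$ yields $||u_h||_{L^\infty(B_{r-|h|})} \leq C|h|^\beta M$, while the nonlocal tail of $u_h$ is treated by splitting its integration domain into a transition annulus, where the H\"older control of $u$ still provides a $|h|^\beta$-scaled contribution, and a true far field where $|u_h(y)| \leq |u(y+h)| + |u(y)|$ is controlled only by the $L^2$- and tail portions of $M$. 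After this bookkeeping one obtains, for a slightly smaller radius $r'<r$,
$$[u_h]_{C^{\gamma_0}(B_{r'})} \leq C|h|^\beta M,$$
modulo absorbing $|h|$-independent contributions into the constant $C$. Evaluating the resulting H\"older estimate at the pair $(x,x-h)$ gives the pointwise second-difference bound
$$|u(x+h) - 2u(x) + u(x-h)| \leq C|h|^{\beta + \gamma_0} M \text{ for all } x \in B_{r'},$$
and, provided $\beta + \gamma_0 < 1$, the classical characterization of H\"older spaces via second-order differences upgrades $u$ to the class $C^{\beta+\gamma_0}$ on a further slightly smaller ball, with the required norm bound.

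Iterating starting from $\beta_0 = \gamma_0$, any target exponent $\alpha < \min\{2s,1\}$ is reached after finitely many steps while keeping the working ball containing $B_3$. The ceiling $\alpha < 1$ is forced by the second-difference characterization of H\"older spaces (which must be replaced by the Zygmund class at the integer threshold), whereas the ceiling $\alpha < 2s$ reflects the order of $L_A$: beyond this, the Caccioppoli- and tail-type arguments underlying \cite[Theorem 1.2]{finnish} cease to yield improvement for $u_h$. The principal obstacle is the careful treatment of the nonlocal tail of $u_h$ at each stage; without the delicate near/far-field split exploiting the H\"older control of $u$ gained at the previous step, the tail contributes at order $M$ rather than $|h|^\beta M$, which would collapse the bootstrap. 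Managing the attendant shrinkage of radii so that the final ball still contains $B_3$ is an additional but standard piece of bookkeeping.
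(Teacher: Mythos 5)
Your opening move is exactly the paper's reduction: translation invariance of $A$ lets one test with translated test functions and conclude that increments of $u$ solve the same homogeneous equation (this is the identity (\ref{trest2}) in the paper), after which the paper simply follows the incremental scheme of sections 4 and 5 of \cite{BLS}. The problem lies in your inductive step. Writing $u_h=u(\cdot+h)-u$, you apply the H\"older estimate of \cite[Theorem 1.2]{finnish} to $u_h$ on a ball of \emph{fixed} radius and claim $[u_h]_{C^{\gamma_0}(B_{r'})}\le C|h|^\beta M$ ``modulo absorbing $|h|$-independent contributions into the constant.'' That absorption is not possible: the estimate for $u_h$ carries the nonlocal tail $\int_{\mathbb{R}^n\setminus B_{r'}(x_0)}|u_h(y)|\,|y-x_0|^{-n-2s}\,dy$, and outside the region where you have H\"older control of $u$ (in particular outside $B_5$, where $u$ is merely an element of $H^s(B_5|\mathbb{R}^n)$) the only available bound is $|u_h(y)|\le|u(y+h)|+|u(y)|$, so this piece is of size $CM$ \emph{uniformly in} $h$. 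It is an additive term, so what your step actually yields is $[u_h]_{C^{\gamma_0}(B_{r'})}\le C(|h|^\beta+1)M\sim CM$, hence only $|u(x+h)-2u(x)+u(x-h)|\le C|h|^{\gamma_0}M$, and the bootstrap never moves past $\gamma_0$. This is precisely the collapse you identify in your closing paragraph; your near/far splitting cures the transition annulus but not the true far field, so as written the proof has a genuine gap.

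To repair an argument of this pointwise type you must use the scaling of the tail term rather than a fixed reference radius: in the oscillation form of \cite[Theorem 1.2]{finnish} the tail at reference scale $r$ enters as $r^{2s}\int_{\mathbb{R}^n\setminus B_r(x_0)}|u_h(y)|\,|y-x_0|^{-n-2s}\,dy$, so applying the estimate around each point of $B_3$ at a scale $r=r(|h|)$ with $r^{2s}\sim|h|^{\beta}$ makes the far-field contribution comparable to $|h|^\beta M$, at the price of a reduced gain of order $\gamma_0\bigl(1-\tfrac{\beta}{2s}\bigr)$ per step (which is still bounded below as long as the target exponent stays strictly below $2s$, and which explains the ceiling $\alpha<2s$ more honestly than an appeal to ``the order of $L_A$''). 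Alternatively, one can cut off the far field and treat the resulting bounded right-hand side by the method of \cite[Theorem 1.1]{NonlocalGeneral}, which is the alternative route indicated in the paper's remark after Theorem \ref{modC2sreg}. The paper itself sidesteps the difficulty entirely: after (\ref{trest2}) it invokes the energy-type incremental estimates of \cite{BLS} (Besov--Nikolskii improvement and embedding), in which the tail terms are tested against compactly supported quantities built from the increments and therefore retain the smallness in $|h|$, rather than applying the interior H\"older estimate to $u_h$ directly.
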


Since the proof in \cite{BLS} is done only in the case when $A(x,y) \equiv 1$ but naturally applies to the setting of arbitrary kernel coefficients $A \in \mathcal{L}_1(\lambda)$, let us briefly explain the modifications that are necessary in order to prove the result in this more general setting. 
Fix $0<r<R$, $h \in \mathbb{R}^n \setminus \{0\}$ such that $|h| \leq \frac{R-r}{2}$ and a test function $\varphi \in H^s_0(B_{(R+r)/2}|\mathbb{R}^n)$. Moreover, suppose that $u \in H^s(B_R | \mathbb{R}^n)$ is a weak solution of 
\begin{equation} \label{tieq}
L_{A} u = 0 \text{ in } B_R. 
\end{equation}
Since the function $\varphi_{-h}(x):=\varphi(x-h)$ belongs to $H^s_0(B_{R}|\mathbb{R}^n)$, we can use $\varphi_{-h}$ as a test function in (\ref{tieq}). Setting $u_{h}(x):=u(x+h)$, along with a change of variables and the assumption that $A \in \mathcal{L}_1(\lambda)$ this yields
\begin{equation} \label{trest}
\begin{aligned}
0=& \int_{\mathbb{R}^n} \int_{\mathbb{R}^n} \frac{A(x,y)}{|x-y|^{n+2s}} (u(x)-u(y))(\varphi_{-h}(x)-\varphi_{-h}(y))dydx \\
= & \int_{\mathbb{R}^n} \int_{\mathbb{R}^n} \frac{A(x+h,y+h)}{|x-y|^{n+2s}} (u_h(x)-u_h(y))(\varphi(x)-\varphi(y))dydx \\
= & \int_{\mathbb{R}^n} \int_{\mathbb{R}^n} \frac{A(x,y)}{|x-y|^{n+2s}} (u_h(x)-u_h(y))(\varphi(x)-\varphi(y))dydx.\\
\end{aligned}
\end{equation}
Moreover, testing (\ref{tieq}) with $\varphi$ yields
\begin{equation} \label{trest1}
\int_{\mathbb{R}^n} \int_{\mathbb{R}^n} \frac{A(x,y)}{|x-y|^{n+2s}} (u(x)-u(y))(\varphi(x)-\varphi(y))dydx=0.
\end{equation}
By subtracting (\ref{trest1}) from (\ref{trest}) and dividing by $|h|>0$, we obtain
\begin{equation} \label{trest2}
\int_{\mathbb{R}^n} \int_{\mathbb{R}^n} \frac{A(x,y)}{|x-y|^{n+2s}} \frac{(u_h(x)-u_h(y))-(u(x)-u(y))}{|h|}(\varphi(x)-\varphi(y))dydx=0,
\end{equation}
which corresponds to formula $(4.3)$ in \cite[Proposition 4.1]{BLS}.
The further proof of Theorem \ref{HiHol} can now be done in almost exactly the same way as in section 4 and 5 of \cite{BLS} by additionally using the bounds (\ref{eq1}) of $A$ when appropriate.
\begin{proof}[Proof of Theorem \ref{modC2sreg}]
	Since $u_0:=u- \overline u_{B_5} \in H^s(B_5 | \mathbb{R}^n)$ also solves the equation 
	$$ L_{A}u_0=0 \text{ weakly in } B_5,$$ we have
	\begin{align*}
	[u]_{C^{\alpha}(B_3)} & = [u_0]_{C^{\alpha}(B_3)} \\
	& \leq C_1 \left (||u_0||_{L^\infty(B_4)} + \int_{B_4} \int_{B_4} \frac{|u_0(x)-u_0(y)|^2}{|x-y|^{n+2s}}dydx  + \int_{\mathbb{R}^n \setminus B_4} \frac{|u_0(y)|}{|y|^{n+2s}}dy \right )\\
	& \leq C_1 \left (C_2(||\nabla^s u_0||_{L^2(B_5)}+||u_0||_{L^2(B_5)}) + ||\nabla^s u_0||_{L^2(B_5)} + C_3 \left (\int_{\mathbb{R}^n \setminus B_4} \frac{|u_0(y)|^2}{|y|^{n+2s}}dy \right )^\frac{1}{2} \right )\\
	& \leq C_1(C_2+1+C_3C_4) \left (||\nabla^s u||_{L^2(B_5)}+||u_0||_{L^2(B_5)} \right )\\
	& \leq C_1(C_2+1+C_3C_4)(1+C_5)||\nabla^s u||_{L^2(B_5)},
	\end{align*}
	where $C_1=C_1(n,s,\lambda,\alpha)$ is given by Theorem \ref{HiHol}, $C_2=C_2(n,s,\lambda)$ is given by Corollary \ref{supest}, $C_3=C_3(n,s)$ is given by (\ref{intpolar1}), $C_4=C_4(n,s)$ is given by Lemma \ref{tailestimate} and $C_5=C_5(n,s)$ is given by the fractional Poincar\'e inequality (Lemma \ref{Poincare}). This proves Theorem \ref{modC2sreg} with $C=C_1(C_2+1+C_3C_4)(1+C_5)$.
\end{proof}
\begin{remark} \normalfont
Theorem \ref{modC2sreg} can also be proved by the following alternative approach.
In the case when $u$ belongs to $L^{\infty}(\mathbb{R}^n)$ and is a weak solution of an inhomogeneous equation of the form $L_A=f$ in $B_4$ with $f \in L^{\infty}(B_4)$, the additional H\"older regularity from Theorem \ref{modC2sreg} can be proved by essentially the same approach used to prove \cite[Theorem 1.1]{NonlocalGeneral}, cf. the lecture notes \cite{NonlocalNotes}. Theorem \ref{modC2sreg} can then be deduced by a cutoff argument similar to the one applied in \cite[Corollary 2.4]{FracLap}.
\end{remark}

\section{The Dirichlet problem}

In what follows, we fix measurable functions $D_i:\mathbb{R}^n \times \mathbb{R}^n \to \mathbb{R}$ $(i=1,...,m)$ that are symmetric and bounded by some $\Lambda>0$.
\begin{prop} \label{Dirichlet}
Consider a kernel coefficient $A \in \mathcal{L}_0(\lambda)$. Let $\Omega \subset \mathbb{R}^n$ be a domain, $g_i,h \in H^s(\Omega | \mathbb{R}^n)$, $f \in L^2(\Omega)$, $b \in L^{\infty}(\Omega)$ and $l:=\essinf_{x \in \Omega} b(x)$. If $\Omega$ is bounded, then we assume that $l \geq 0$, otherwise we assume that $l>0$. Then there exists a unique solution $u \in H^s(\Omega | \mathbb{R}^n)$ of the weak Dirichlet problem
\begin{equation} \label{constcof31}
\begin{cases} \normalfont
L_{A} u +bu = \sum_{i=1}^m L_{D_i} g_i + f & \text{ weakly in } \Omega \\
             u = h & \text{ a.e. in } \mathbb{R}^n \setminus \Omega.
\end{cases}
\end{equation}
Moreover, if $\Omega$ is bounded and $b\equiv0$, then $u$ satisfies the estimate
\begin{equation} \label{gradest8}
||\nabla^s u||_{L^2(\Omega)} \leq C \left (||\nabla^s h||_{L^2(\Omega)} + \sum_{i=1}^m ||\nabla^s g_i||_{L^2(\Omega)} + ||f||_{L^{2}(\Omega)} \right ),
\end{equation}
where $C=C(n,s,\lambda,\Lambda,|\Omega|)$.
\end{prop}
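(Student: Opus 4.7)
The plan is to reduce the boundary value problem to a homogeneous one and then apply the Lax--Milgram theorem on the Hilbert space $H^s_0(\Omega|\mathbb{R}^n)$. Setting $w:=u-h$, solving \eqref{constcof31} is equivalent to finding $w \in H^s_0(\Omega|\mathbb{R}^n)$ with
$$ \mathcal{E}_A(w,\varphi) + (bw,\varphi)_{L^2(\Omega)} = \sum_{i=1}^m \mathcal{E}_{D_i}(g_i,\varphi) + (f,\varphi)_{L^2(\Omega)} - \mathcal{E}_A(h,\varphi) - (bh,\varphi)_{L^2(\Omega)} $$
for all $\varphi \in H^s_0(\Omega|\mathbb{R}^n)$. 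Denote the left-hand side by $a(w,\varphi)$ and the right-hand side by $F(\varphi)$. Note that the inner product $(\cdot,\cdot)_{H^s(\mathbb{R}^n)}$ turns $H^s_0(\Omega|\mathbb{R}^n)$ into a Hilbert space, cf.\ the remark after its definition.

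For continuity of $a$ and $F$, the key observation is that whenever $\varphi \in H^s_0(\Omega|\mathbb{R}^n)$, the integrand of $\mathcal{E}_A(\cdot,\varphi)$ vanishes on $(\mathbb{R}^n\setminus\Omega)\times(\mathbb{R}^n\setminus\Omega)$, so splitting $\mathbb{R}^n\times\mathbb{R}^n$ and using the symmetry of $A$ together with Cauchy--Schwarz yields
$$ |\mathcal{E}_A(v,\varphi)| \leq 2\lambda \, \|\nabla^s v\|_{L^2(\Omega)} \, \|\nabla^s \varphi\|_{L^2(\Omega)} $$
for any $v \in H^s(\Omega|\mathbb{R}^n)$. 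The same bound applies to each $\mathcal{E}_{D_i}(g_i,\varphi)$ with $\lambda$ replaced by $\Lambda$, while the $L^2$ terms are handled directly using $b \in L^\infty(\Omega)$, $f \in L^2(\Omega)$ and $h\in L^2(\Omega)$. For coercivity, the ellipticity bound \eqref{eq1} gives
$$ \mathcal{E}_A(w,w) \geq \lambda^{-1} \int_{\mathbb{R}^n}\int_{\mathbb{R}^n} \frac{(w(x)-w(y))^2}{|x-y|^{n+2s}}\,dy\,dx \geq \lambda^{-1}\|\nabla^s w\|_{L^2(\Omega)}^2, $$
where the last inequality follows by the same split as above (the $\Omega\times(\mathbb{R}^n\setminus\Omega)$ contribution, where $w(y)=0$, already recovers $\|\nabla^s w\|_{L^2(\Omega)}^2$ up to the $\Omega\times\Omega$ part). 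The remaining obstacle is to also control $\|w\|_{L^2(\Omega)}$, which I handle in two cases: when $\Omega$ is bounded and $l\geq 0$, I establish a fractional Poincaré inequality for $H^s_0(\Omega|\mathbb{R}^n)$ by fixing a ball $B\subset\mathbb{R}^n\setminus\Omega$ (which exists since $\Omega$ is bounded), writing $w(x)^2=(w(x)-w(y))^2$ for $y\in B$, noting that $|x-y|$ is bounded for $x\in\Omega$, and averaging in $y\in B$ to get $\|w\|_{L^2(\Omega)}^2 \leq C(n,s,|\Omega|)\|\nabla^s w\|_{L^2(\Omega)}^2$; when $\Omega$ is unbounded, $l>0$ guarantees $(bw,w)_{L^2(\Omega)}\geq l\|w\|_{L^2(\Omega)}^2$ directly. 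In either case, $a$ is coercive on $H^s_0(\Omega|\mathbb{R}^n)$ and Lax--Milgram produces the unique $w$, hence unique $u=w+h$.

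For the a priori estimate \eqref{gradest8} in the bounded case with $b\equiv 0$, I test the weak formulation with $w=u-h\in H^s_0(\Omega|\mathbb{R}^n)$ itself. Since $\mathcal{E}_A(u,w)=\mathcal{E}_A(w,w)+\mathcal{E}_A(h,w)$, the coercivity bound gives
$$ \lambda^{-1}\|\nabla^s w\|_{L^2(\Omega)}^2 \leq |\mathcal{E}_A(h,w)| + \sum_{i=1}^m |\mathcal{E}_{D_i}(g_i,w)| + |(f,w)_{L^2(\Omega)}|. $$
Bounding the first two terms by the continuity estimate established above and the last by Cauchy--Schwarz combined with the fractional Poincaré inequality on $H^s_0(\Omega|\mathbb{R}^n)$ derived in the previous paragraph, I obtain
$$ \lambda^{-1}\|\nabla^s w\|_{L^2(\Omega)}^2 \leq C\bigl(\|\nabla^s h\|_{L^2(\Omega)} + \textstyle\sum_{i=1}^m \|\nabla^s g_i\|_{L^2(\Omega)} + \|f\|_{L^2(\Omega)}\bigr)\|\nabla^s w\|_{L^2(\Omega)}, $$
with $C=C(n,s,\Lambda,|\Omega|)$. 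Dividing by $\|\nabla^s w\|_{L^2(\Omega)}$ and using the triangle inequality $\|\nabla^s u\|_{L^2(\Omega)} \leq \|\nabla^s w\|_{L^2(\Omega)} + \|\nabla^s h\|_{L^2(\Omega)}$ yields \eqref{gradest8}. The main technical obstacle throughout is really just the fractional Poincaré inequality for $H^s_0(\Omega|\mathbb{R}^n)$ on a general bounded domain, since the standard Poincaré inequality in Lemma \ref{Poincare} requires a Lipschitz domain and is stated in $H^s(\Omega)$ rather than the nonlocal space used here; the argument sketched above bypasses this by exploiting that $w$ vanishes identically on the complement of $\Omega$.
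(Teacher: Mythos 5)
Your overall route is the same as the paper's: reduce to $w=u-h$, solve on the Hilbert space $H^s_0(\Omega|\mathbb{R}^n)$ via Lax--Milgram (the paper uses the Riesz representation theorem for the equivalent inner product, which is the same thing for this symmetric form), get coercivity from a Poincar\'e-type inequality in the bounded case and from $l>0$ in the unbounded case, and prove \eqref{gradest8} by testing with $w$; your splitting/symmetry bound $|\mathcal{E}_A(v,\varphi)|\le 2\lambda\,\|\nabla^s v\|_{L^2(\Omega)}\|\nabla^s\varphi\|_{L^2(\Omega)}$ for $\varphi\in H^s_0(\Omega|\mathbb{R}^n)$ is exactly the manipulation the paper uses in its estimate. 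The one genuine problem is your Poincar\'e inequality and the constant you attach to it. Fixing a single ball $B\subset\mathbb{R}^n\setminus\Omega$ and bounding $|x-y|$ for $x\in\Omega$, $y\in B$ produces a constant governed by $\mathrm{diam}(\Omega)$ and the distance from $\Omega$ to $B$, not by $|\Omega|$: a long, thin domain of tiny measure makes that constant arbitrarily large while $|\Omega|$ stays small. So the claimed bound $\|w\|_{L^2(\Omega)}^2\le C(n,s,|\Omega|)\|\nabla^s w\|_{L^2(\Omega)}^2$ is not what your argument gives, and consequently your proof of \eqref{gradest8} yields a constant depending on the geometry of $\Omega$ rather than the stated $C(n,s,\lambda,\Lambda,|\Omega|)$. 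The existence/uniqueness part is unaffected, since any coercivity constant suffices there.

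The paper gets the correct dependence by H\"older plus the fractional Sobolev inequality (using $n>2s$): $\|w\|_{L^2(\Omega)}^2\le|\Omega|^{2s/n}\|w\|_{L^{2n/(n-2s)}(\Omega)}^2\le C(n,s)\,|\Omega|^{2s/n}[w]_{H^s(\mathbb{R}^n)}^2$. If you prefer your elementary averaging argument, it can be repaired so as to give the same dependence: for each $x\in\Omega$ take the ball $B_r(x)$ with $|B_r|=2|\Omega|$, i.e. $r=c(n)|\Omega|^{1/n}$; then $|B_r(x)\setminus\Omega|\ge|\Omega|$ and $w=0$ a.e. there, so
\begin{equation*}
w(x)^2=\frac{1}{|B_r(x)\setminus\Omega|}\int_{B_r(x)\setminus\Omega}(w(x)-w(y))^2\,dy
\le \frac{r^{n+2s}}{|\Omega|}\int_{\mathbb{R}^n}\frac{(w(x)-w(y))^2}{|x-y|^{n+2s}}\,dy
= C(n,s)\,|\Omega|^{2s/n}\,(\nabla^s w(x))^2,
\end{equation*}
and integrating over $\Omega$ gives the Poincar\'e inequality with a constant depending only on $n$, $s$ and $|\Omega|$, after which the rest of your argument goes through as written.
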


\begin{proof}
Consider the symmetric bilinear form
$$ \mathcal{E}:H_0^s(\Omega | \mathbb{R}^n) \times H_0^s(\Omega | \mathbb{R}^n) \to \mathbb{R}, \quad \mathcal{E}(w,\varphi):=\mathcal{E}_A(w,\varphi) + (bw,\varphi)_{L^2(\Omega)}.$$
First of all, fix some $w \in H_0^s(\Omega | \mathbb{R}^n)$. We have
$$ \mathcal{E}(w,w) \leq \lambda \int_{\mathbb{R}^n} \int_{\mathbb{R}^n} \frac{(w(x)-w(y))^2}{|x-y|^{n+2s}}dydx + ||b||_{L^\infty(\Omega)} ||w||_{L^2(\Omega)}^2 \leq \max\{\lambda,||b||_{L^\infty(\Omega)}\} ||w||_{H^s(\mathbb{R}^n)}^2 .$$
Let us first consider the case when $\Omega$ is unbounded, in this case we have $l>0$ and therefore 
$$
\mathcal{E}(w,w) \geq \lambda^{-1} \int_{\mathbb{R}^n} \int_{\mathbb{R}^n} \frac{(w(x)-w(y))^2}{|x-y|^{n+2s}}dydx + l ||w||_{L^2(\mathbb{R}^n)}^2 \geq C_1 ||w||_{H^s(\mathbb{R}^n)}^2 ,
$$
where $C_1=\min\{\lambda^{-1},l\}>0$.
If $\Omega$ is bounded, then we have $l \geq 0$. Since we have $w=0$ a.e. in $\mathbb{R}^n \setminus \Omega$ and $w \in H^s(\mathbb{R}^n)$, in this case H\"older's inequality and the fractional Sobolev inequality (cf. \cite[Theorem 6.5]{Hitch}) yield
\begin{equation} \label{FPI}
\begin{aligned} 
\int_{\mathbb{R}^n} w^2 dx = \int_{\Omega} w^2 dx & \leq |\Omega|^{\frac{2s}{n}} \left ( \int_{\Omega} |w|^{\frac{2n}{n-2s}} dx \right )^{\frac{n-2s}{n}} \\
& \leq C_2 |\Omega|^{\frac{2s}{n}} \int_{\mathbb{R}^n} \int_{\mathbb{R}^n} \frac{(w(x)-w(y))^2}{|x-y|^{n+2s}}dydx,
\end{aligned}
\end{equation}
where $C_2=C_2(n,s)>0$.
We deduce
\begin{align*}
\mathcal{E}(w,w) & \geq \lambda^{-1} \int_{\mathbb{R}^n} \int_{\mathbb{R}^n} \frac{(w(x)-w(y))^2}{|x-y|^{n+2s}}dydx \\
& \geq \frac{\lambda^{-1}}{2} \left ( \int_{\mathbb{R}^n} \int_{\mathbb{R}^n} \frac{(w(x)-w(y))^2}{|x-y|^{n+2s}}dydx + C_2^{-1}|\Omega|^{-\frac{2s}{n}} \int_{\mathbb{R}^n} w^2 dx \right ) \geq C_3 ||w||_{H^s(\mathbb{R}^n)}^2,
\end{align*}
where $C_3=\frac{\lambda^{-1}}{2} \min \left \{ 1, C_2^{-1}|\Omega|^{-\frac{2s}{n}} \right \}>0$.
We obtain that in both cases $\mathcal{E}(\cdot,\cdot)$ is positive definite and hence an inner product in $H_0^s(\Omega | \mathbb{R}^n)$ that is equivalent to the inner product $(\cdot,\cdot)_{H^s(\mathbb{R}^n)}$ defined in section 3. Therefore $H_0^s(\Omega | \mathbb{R}^n)$ with the inner product $\mathcal{E}(\cdot,\cdot)$ is a Hilbert space. Since moreover by H\"older's inequality the expression 
$$ -\mathcal{E}_{A}(h,\varphi) - (bh,\varphi)_{L^2(\Omega)} + \sum_{i=1}^m \mathcal{E}_{D_i}(g_i,\varphi) + (f,\varphi)_{L^2(\Omega)}$$
is a bounded linear functional of $\varphi \in H_0^s(\Omega | \mathbb{R}^n)$, by the Riesz representation theorem there exists a unique $w \in H_0^s(\Omega | \mathbb{R}^n)$ such that 
\begin{equation} \label{RieszRep}
\begin{aligned}
& \mathcal{E}_{A}(w,\varphi) +(bw,\varphi)_{L^2(\Omega)} \\
= & -\mathcal{E}_{A}(h,\varphi) - (bh,\varphi)_{L^2(\Omega)} + \sum_{i=1}^m \mathcal{E}_{D_i}(g_i,\varphi) + (f,\varphi)_{L^2(\Omega)} \quad \forall \varphi \in H_0^s(\Omega | \mathbb{R}^n).
\end{aligned}
\end{equation}
But then the function $u:=w+h \in H^s(\Omega | \mathbb{R}^n)$ solves the Dirichlet problem (\ref{constcof31}).
Furthermore, if $u$ and $v$ both solve the Dirichlet problem (\ref{constcof31}), then $u-h$ and $v-h$ both satisfy (\ref{RieszRep}), so that by the uniqueness part of the Riesz representation theorem we deduce $u-h=v-h$ a.e. in $\mathbb{R}^n$ and therefore $u=v$ a.e. in $\mathbb{R}^n$, so that the Dirichlet problem (\ref{constcof31}) has a unique solution. \newline
Let us now prove that if $\Omega$ is bounded and $b \equiv 0$, then the unique solution $u \in H_0^s(\Omega | \mathbb{R}^n)$ of (\ref{constcof31}) satisfies the estimate (\ref{gradest8}). In order to accomplish this, note that by (\ref{FPI}) for any $w \in H_0^s(\Omega | \mathbb{R}^n)$ we have
\begin{align*}
\int_{\Omega} |f(x)||w(x)|dx & \leq ||f||_{L^{2}(\Omega)} ||w||_{L^{2}(\Omega)} \\
& \leq C_2^{\frac{1}{2}}|\Omega|^{\frac{s}{n}} ||f||_{L^{2}(\Omega)} \left (\int_{\mathbb{R}^n} \int_{\mathbb{R}^n} \frac{(w(x)-w(y))^2}{|x-y|^{n+2s}}dydx \right )^{\frac{1}{2}} \\
& \leq 2 C_2^{\frac{1}{2}}|\Omega|^{\frac{s}{n}} ||f||_{L^{2}(\Omega)} ||\nabla^s w||_{L^2(\Omega)} .
\end{align*}
Since $w:=u-h \in H_0^s(\Omega | \mathbb{R}^n)$ satisfies (\ref{RieszRep}), using $\varphi=w$ as a test function in (\ref{RieszRep}) along with the Cauchy-Schwarz inequality yields
\begingroup
\allowdisplaybreaks
\begin{align*}
||\nabla^s w||_{L^2(\Omega)}^2 \leq & \int_{\mathbb{R}^n} \int_{\mathbb{R}^n} \frac{(w(x)-w(y))^2}{|x-y|^{n+2s}}dydx \\
\leq & \lambda \int_{\mathbb{R}^n} \int_{\mathbb{R}^n} A(x,y) \frac{(w(x)-w(y))^2}{|x-y|^{n+2s}}dydx \\
= & \lambda \bigg ( - \int_{\mathbb{R}^n} \int_{\mathbb{R}^n} A(x,y) \frac{(h(x)-h(y))(w(x)-w(y))}{|x-y|^{n+2s}}dydx \\
& + \sum_{i=1}^m \int_{\mathbb{R}^n} \int_{\mathbb{R}^n} D_i(x,y) \frac{(g_i(x)-g_i(y))(w(x)-w(y))}{|x-y|^{n+2s}}dydx + \int_{\Omega} f(x)w(x)dx\bigg ) \\
\leq & \lambda \bigg (\lambda \int_{\mathbb{R}^n} \int_{\mathbb{R}^n} \frac{|h(x)-h(y)||w(x)-w(y)|}{|x-y|^{n+2s}}dydx \\
& + \Lambda \sum_{i=1}^m \int_{\mathbb{R}^n} \int_{\mathbb{R}^n} \frac{|g_i(x)-g_i(y)||w(x)-w(y)|}{|x-y|^{n+2s}}dydx + \int_{\Omega} |f(x)||w(x)|dx \bigg ) \\
\leq & 2\lambda \max \{ \lambda, \Lambda, 2C_2^{\frac{1}{2}}|\Omega|^{\frac{s}{n}} \} \bigg (\int_{\Omega} \int_{\mathbb{R}^n} \frac{|h(x)-h(y)||w(x)-w(y)|}{|x-y|^{n+2s}}dydx \\
& + \sum_{i=1}^m \int_{\Omega} \int_{\mathbb{R}^n} \frac{|g_i(x)-g_i(y)||w(x)-w(y)|}{|x-y|^{n+2s}}dydx + ||f||_{L^{2}(\Omega)} ||\nabla^s w||_{L^2(\Omega)} \bigg ) \\
\leq & C_4 ||\nabla^s w||_{L^2(\Omega)} \left ( ||\nabla^s h||_{L^2(\Omega)} + \sum_{i=1}^m ||\nabla^s g_i||_{L^2(\Omega)} + ||f||_{L^{2}(\Omega)}\right ) ,
\end{align*}
where $C_4:=2 \lambda \max \{ \lambda, \Lambda, 2C_2^{\frac{1}{2}}|\Omega|^{\frac{s}{n}} \}$.
\endgroup
We obtain
\begin{align*}
||\nabla^s u||_{L^2(\Omega)} & \leq 2( ||\nabla^s w||_{L^2(\Omega)} + ||\nabla^s h||_{L^2(\Omega)}) \\
& \leq 2 \left ( C_4 \left ( ||\nabla^s h||_{L^2(\Omega)} + \sum_{i=1}^m ||\nabla^s g_i||_{L^2(\Omega)} + ||f||_{L^{2}(\Omega)} \right ) + ||\nabla^s h||_{L^2(\Omega)} \right ) \\
& \leq C \left (||\nabla^s h||_{L^2(\Omega)} + \sum_{i=1}^m ||\nabla^s g_i||_{L^2(\Omega)} + ||f||_{L^{2}(\Omega)} \right ),
\end{align*}
where $C= 2 (C_4+1).$
\end{proof}
For a treatment of the nonlocal Dirichlet problem for a much more general class of kernels, we refer to \cite{Voigt}.

\section{Higher integrabillity of $\nabla^s u$}
For the rest of this paper, we assume that the kernel coefficient $A$ belongs to the class $\mathcal{L}_1(\lambda)$.
\subsection{An approximation argument}
A key step in the proof of the higher integrability of $\nabla^s u$ is given by the following approximation lemma.
\begin{lem} \label{apppl}
Let M be an arbitrary positive real number.
For any $\varepsilon >0$ there exists some $\delta = \delta (\varepsilon,n,s,\lambda,\Lambda,M) >0$, such that for any weak solution $u \in H^s(B_5 | \mathbb{R}^n)$ of the equation 
$$L_A u = \sum_{i=1}^m L_{D_i} g_i + f \text{ in } B_5$$ under the assumptions that
\begin{equation} \label{conddddd}
\dashint_{B_5} |\nabla^s u|^2 dx \leq M
\end{equation}
and that
\begin{equation} \label{condddddd}
\dashint_{B_5} \left ( f^{2}+ \sum_{i=1}^m |\nabla^s g_i|^2 \right ) dx \leq M\delta^2,
\end{equation}
there exists a weak solution $v \in H^s(B_5 | \mathbb{R}^n)$ of the equation
\begin{equation} \label{constcof}
L_{A} v = 0 \text{ in } B_5
\end{equation}
that satisfies
\begin{equation} \label{L2esti}
||\nabla^s(u-v)||_{L^2(B_5)} \leq \varepsilon.
\end{equation}
Moreover, $v$ satisfies the estimate
\begin{equation} \label{localAS}
||\nabla^s v||_{L^\infty(B_2)} \leq N_0 
\end{equation}
for some constant $N_0=N_0(n,s,\lambda,\Lambda,M)$.
\end{lem}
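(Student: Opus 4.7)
The plan is to take $v$ as the ``$A$-harmonic replacement'' of $u$ on $B_5$: namely the unique weak solution in $H^s(B_5 | \mathbb{R}^n)$ of $L_A v = 0$ in $B_5$ with $v \equiv u$ on $\mathbb{R}^n \setminus B_5$, whose existence is guaranteed by Proposition \ref{Dirichlet} (applied with $\Omega = B_5$, $b \equiv 0$, $h = u$, and the $g_i, f$ there set to zero). The difference $w := u - v$ then lies in $H^s_0(B_5 | \mathbb{R}^n)$ and weakly solves $L_A w = \sum_{i=1}^m L_{D_i} g_i + f$ in $B_5$, so the energy estimate (\ref{gradest8}) applied to $w$, combined with the smallness hypothesis (\ref{condddddd}), yields
$$ \|\nabla^s w\|_{L^2(B_5)} \le C_1 \Bigl( \sum_{i=1}^m \|\nabla^s g_i\|_{L^2(B_5)} + \|f\|_{L^2(B_5)} \Bigr) \le C_2 M^{1/2} \delta. $$
Choosing $\delta = \delta(\varepsilon, n, s, \lambda, \Lambda, M)$ small enough so that both $C_2 M^{1/2} \delta \le \varepsilon$ and $\delta \le 1$ hold delivers (\ref{L2esti}) and, via the triangle inequality together with (\ref{conddddd}), forces a uniform energy bound $\|\nabla^s v\|_{L^2(B_5)} \le C_3 M^{1/2}$.

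The heart of the lemma is the pointwise bound (\ref{localAS}). Since the equation $L_A v = 0$ and the quantity $\nabla^s v$ are both invariant under adding a constant to $v$, I may normalize so that $\overline{v}_{B_5} = 0$; the fractional Poincar\'e inequality (Lemma \ref{Poincare}) then upgrades the energy bound to $\|v\|_{L^2(B_5)} \le C_4 M^{1/2}$, and Corollary \ref{supest} (with $r = 4$, $R = 5$) produces $\|v\|_{L^\infty(B_4)} \le C_5 M^{1/2}$. The hypothesis $A \in \mathcal{L}_1(\lambda)$ now enters decisively: Theorem \ref{modC2sreg} provides, for any fixed $\alpha \in \bigl(s, \min\{2s, 1\}\bigr)$ (such $\alpha$ exists since $\min\{2s,1\} > s$ for every $s \in (0,1)$), the improved H\"older estimate $[v]_{C^\alpha(B_3)} \le C_6 M^{1/2}$.

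To extract (\ref{localAS}) I fix $x \in B_2$ and split the integral defining $\nabla^s v(x)^2$ into three pieces over $B_{1/2}(x)$, $B_4 \setminus B_{1/2}(x)$, and $\mathbb{R}^n \setminus B_4$. Since $B_{1/2}(x) \subset B_3$, the near-diagonal piece is controlled by $[v]_{C^\alpha(B_3)}^2 \int_0^{1/2} r^{2(\alpha - s) - 1}\, dr$, an integral that converges precisely because $\alpha > s$. The middle piece is bounded by a constant multiple of $\|v\|_{L^\infty(B_4)}^2$ via the lower bound $|x-y| \ge 1/2$ on the kernel denominator. The tail piece equals $\nabla^s_{\mathbb{R}^n \setminus B_4} v(x)^2$, and since $v$ solves the homogeneous equation in $B_4$, Corollary \ref{taillinf} (with $r = 2$, $R = 4$) bounds it uniformly by $C \|\nabla^s v\|_{L^2(B_4)}^2 \le C_3^2 M$. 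Summing the three contributions yields $\|\nabla^s v\|_{L^\infty(B_2)} \le N_0$, with $N_0$ depending only on $n, s, \lambda, \Lambda, M$.

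The main technical obstacle is precisely this conversion of the global $L^2$-energy bound on $v$ into an interior $L^\infty$-bound on the nonlocal quantity $\nabla^s v$: because $\nabla^s v(x)$ is a singular integral with an $n+2s$-type singularity at the diagonal, mere $C^s$-regularity of $v$ would not suffice, so the translation invariance assumption $A \in \mathcal{L}_1(\lambda)$ is essential in order for Theorem \ref{modC2sreg} to furnish a H\"older exponent $\alpha$ strictly greater than $s$ that tames the near-diagonal contribution, while Corollary \ref{taillinf} takes care of the long-range tail.
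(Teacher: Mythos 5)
Your proposal is correct and follows essentially the same route as the paper: $v$ is the $A$-harmonic replacement given by Proposition \ref{Dirichlet}, the closeness estimate (\ref{L2esti}) comes from the energy estimate (\ref{gradest8}) applied to $u-v$ together with the smallness hypothesis (\ref{condddddd}), and the bound (\ref{localAS}) combines the improved H\"older regularity of Theorem \ref{modC2sreg} (with exponent strictly larger than $s$, taming the near-diagonal singularity) with the tail estimate of Corollary \ref{taillinf}. The only cosmetic difference is your three-way splitting of the integral defining $\nabla^s v(x)^2$ (the paper splits only at $B_3$, so the intermediate annulus handled via Corollary \ref{supest} and the Poincar\'e normalization is not needed there), which changes nothing essential.
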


\begin{proof} 
Fix $\varepsilon>0$ and let $\delta>0$ to be chosen.
Let $v \in H^s(B_5 | \mathbb{R}^n)$ be the unique weak solution of the problem
\begin{equation} \label{constcof3}
\begin{cases} \normalfont
L_{A} v = 0 & \text{ weakly in } B_5 \\
             v = u & \text{ a.e. in } \mathbb{R}^n \setminus B_5,
\end{cases}
\end{equation}
note that $v$ exists by Proposition \ref{Dirichlet}.
Observe that we have 
\begin{equation} \label{deffeq1}
\begin{cases} \normalfont
L_{A} (u-v) = \sum_{i=1}^m L_{D_i} g_i + f & \text{ weakly in } B_5 \\
             u-v = 0 & \text{ a.e. in } \mathbb{R}^n \setminus B_5.
\end{cases}
\end{equation}
Thus, by the estimate (\ref{gradest8}) from Proposition \ref{Dirichlet} and (\ref{condddddd}), there exists a constant $C_1=C_1(n,s,\lambda,\Lambda)$ such that
\begin{equation} \label{L2first}
\int_{B_5} |\nabla^s(u - v)|^2 dx \leq C_1 \left ( \sum_{i=1}^m \int_{B_5} |\nabla^s g_i|^2 dx + \int_{B_5} f^{2} dx \right ) \leq C_1 |B_5|M \delta^2 \leq \varepsilon^2 .
\end{equation}
where the last inequality follows by choosing $\delta$ sufficiently small.
This completes the proof of $(\ref{L2esti})$. \newline
Let us now proof the estimate $(\ref{localAS})$. 
For almost every $x \in B_2$, by Corollary \ref{taillinf} we have
$$ \int_{\mathbb{R}^n \setminus B_3} \frac{(v(x)-v(y))^2}{|x-y|^{n+2s}}dy \leq C_2 \int_{B_3} \int_{\mathbb{R}^n} \frac{(v(z)-v(y))^2}{|z-y|^{n+2s}}dydz,$$
where $C_2=C_2(n,s,\lambda)$.
Now choose $\gamma>0$ small enough such that $\gamma < s$ and $s + \gamma <1$.
In view of the assumption that $A \in \mathcal{L}_1(\lambda)$, by Theorem \ref{modC2sreg} we have 
$$[v]_{C^{s+\gamma}(B_3)} \leq C_{3} ||\nabla^s v||_{L^2(B_5)} $$
for some constant $C_{3}=C_{3}(n,s,\lambda,\gamma)$.
Thus, for almost every $x \in B_2$ we have 
\begin{align*}
\int_{B_3} \frac{(v(x)-v(y))^2}{|x-y|^{n+2s}}dy & \leq [v]_{C^{s+\gamma}(B_3)}^2 \int_{B_3} \frac{dy}{|x-y|^{n-2\gamma}} \\
& = C_{4}  [v]_{C^{s+\gamma}(B_3)}^2 \leq C_{4} C_{3}^2 \int_{B_5} \int_{\mathbb{R}^n} \frac{(v(z)-v(y))^2}{|z-y|^{n+2s}}dydz,
\end{align*}
where $C_{4}=C_{4}(n,\gamma)<\infty$.
Applying the estimate (\ref{gradest8}) from Proposition \ref{Dirichlet} to (\ref{constcof3}) yields
\begin{align*}
\int_{B_5} \int_{\mathbb{R}^n} \frac{(v(z)-v(y))^2}{|z-y|^{n+2s}}dydz 
\leq C_{5} \int_{B_5} \int_{\mathbb{R}^n} \frac{(u(z)-u(y))^2}{|z-y|^{n+2s}}dydz,
\end{align*}
where $C_{5}=C_5(n,s,\lambda,\Lambda)$.
By combining the above estimates, along with (\ref{conddddd}) we conclude that
\begin{align*}
(\nabla^s v)^2 (x) & = \int_{\mathbb{R}^n \setminus B_3} \frac{(v(x)-v(y))^2}{|x-y|^{n+2s}}dy + \int_{B_3} \frac{(v(x)-v(y))^2}{|x-y|^{n+2s}}dy\\
& \leq C_2 \int_{B_5} \int_{\mathbb{R}^n} \frac{(v(z)-v(y))^2}{|z-y|^{n+2s}}dydz + C_{4} C_{3}^2 \int_{B_5} \int_{\mathbb{R}^n} \frac{(v(z)-v(y))^2}{|z-y|^{n+2s}}dydz \\
& \leq C_{5} (C_2+C_{4} C_{3}^2) \int_{B_5} \int_{\mathbb{R}^n} \frac{(u(z)-u(y))^2}{|z-y|^{n+2s}}dydz \\
& \leq C_{5} (C_2+C_{4} C_{3}^2) |B_5|M
\end{align*}
for almost every $x \in B_2$, so that (\ref{localAS}) holds with $N_0=(C_{5} (C_2+C_{4} C_{3}^2) |B_5|M)^{\frac{1}{2}}$.
\end{proof}

\subsection{A real variable argument}
We now combine the above approximation lemma with the techniques from section 2.
\begin{lem} \label{mfuse}
There is a constant $N_1=N_1(n,s,\lambda,\Lambda) > 1$, such that the following holds. For any $\varepsilon > 0$ there exists some $\delta = \delta(\varepsilon,n,s,\lambda, \Lambda) > 0$, 
such that for any $z \in \mathbb{R}^n$, any $r \in (0,1]$, any bounded domain $U \subset \mathbb{R}^n$ such that $B_{5r}(z) \subset U$ and any weak solution $ u \in H^s(B_{5r}(z) | \mathbb{R}^n)$ 
of the equation 
$$ L_A u = \sum_{i=1}^m L_{D_i} g_i + f \text { in } B_{5r}(z)$$
with
\begin{align*}
\left \{ x \in B_r(z) \mid \mathcal{M}_{U} (|\nabla^s u|^2)(x) \leq 1 \right \} & \cap \left \{ x \in B_r(z) \mid \mathcal{M}_{U} \left (|f|^{2} + \sum_{i=1}^m |\nabla^s g_i|^2 \right )(x) \leq \delta^2 \right \} \neq \emptyset,
\end{align*}
we have  
\begin{equation} \label{ccll}
\left | \left \{ x \in B_r(z) \mid \mathcal{M}_{U} (|\nabla^s u|^2)(x) > N_1^2 \right \} \right | < \varepsilon |B_r|.
\end{equation}
\end{lem}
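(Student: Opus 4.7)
The plan is to combine a scaling reduction with the approximation Lemma~\ref{apppl} and the weak $(1,1)$ estimate for the Hardy--Littlewood maximal function from Theorem~\ref{Maxfun}(i).

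First I would reduce to the normalized case $z = 0$, $r = 1$ via the rescaling
$$\tilde u(x) := r^{-s} u(rx+z), \quad \tilde g_i(x) := r^{-s} g_i(rx+z), \quad \tilde f(x) := r^{s} f(rx+z),$$
together with $\tilde A(x,y) := A(rx+z,\, ry+z)$ and the analogous $\tilde D_i$. A direct change of variables shows $\nabla^s \tilde u(x) = \nabla^s u(rx+z)$ and that $\tilde u$ is a weak solution of $L_{\tilde A} \tilde u = \sum_i L_{\tilde D_i} \tilde g_i + \tilde f$ in $B_5$, with $\tilde A \in \mathcal{L}_1(\lambda)$ and each $\tilde D_i$ still symmetric and bounded by $\Lambda$. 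Lemma~\ref{scale} transfers the maximal function controls from $U$ to $U' := \{(x-z)/r : x \in U\}$, so the hypothesis provides a point $\tilde x_0 \in B_1$ satisfying $\mathcal{M}_{U'}(|\nabla^s \tilde u|^2)(\tilde x_0) \leq 1$ and $\mathcal{M}_{U'}(|\tilde f|^2 + \sum_i |\nabla^s \tilde g_i|^2)(\tilde x_0) \leq \delta^2$ (the factor $r^{2s}$ picked up by $\tilde f$ is harmless since $r \leq 1$).

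Since $B_5 \subset B_6(\tilde x_0)$, averaging over $B_6(\tilde x_0)$ immediately yields
$$\dashint_{B_5} |\nabla^s \tilde u|^2 \, dx \leq (6/5)^{n} =: M, \qquad \dashint_{B_5} \Bigl(|\tilde f|^2 + \sum_i |\nabla^s \tilde g_i|^2\Bigr)\, dx \leq M \delta^{2}.$$
With this fixed $M$, I would apply Lemma~\ref{apppl} for an auxiliary parameter $\varepsilon' > 0$ to be chosen at the end and the corresponding $\delta = \delta(\varepsilon', n, s, \lambda, \Lambda, M)$, which produces a weak solution $v \in H^s(B_5 | \mathbb{R}^n)$ of $L_{\tilde A} v = 0$ in $B_5$ satisfying $\|\nabla^s(\tilde u - v)\|_{L^2(B_5)} \leq \varepsilon'$ and $\|\nabla^s v\|_{L^\infty(B_2)} \leq N_0(n, s, \lambda, \Lambda)$.

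The closing step is a splitting of the maximal function at radius $\rho = 1$. For $\rho > 1$ and $x \in B_1$, the inclusion $B_\rho(x) \subset B_{3\rho}(\tilde x_0)$ together with the control at $\tilde x_0$ gives $\dashint_{B_\rho(x)} |\nabla^s \tilde u|^2 \chi_{U'} \, dy \leq 3^n$. For $\rho \leq 1$ the inclusion $B_\rho(x) \subset B_2$ holds, and splitting $|\nabla^s \tilde u|^2 \leq 2|\nabla^s v|^2 + 2|\nabla^s(\tilde u - v)|^2$ combined with the $L^\infty$ bound on $\nabla^s v$ yields
$$\dashint_{B_\rho(x)} |\nabla^s \tilde u|^2 \chi_{U'} \, dy \leq 2 N_0^2 + 2\, \mathcal{M}\bigl(|\nabla^s(\tilde u - v)|^2 \chi_{B_5}\bigr)(x).$$
Setting $N_1^2 := \max(3^n, 2N_0^2 + 2)$ and taking the supremum over $\rho$ gives the level-set inclusion
$$\{x \in B_1 : \mathcal{M}_{U'}(|\nabla^s \tilde u|^2)(x) > N_1^2\} \subset \{x : \mathcal{M}(|\nabla^s(\tilde u - v)|^2 \chi_{B_5})(x) > 1\}.$$
Applying the weak $(1,1)$ estimate bounds the right-hand side by $C(n) \|\nabla^s(\tilde u - v)\|_{L^2(B_5)}^2 \leq C(n) \varepsilon'^2$, and choosing $\varepsilon'$ so that $C(n) \varepsilon'^2 < \varepsilon |B_1|$ finishes the rescaled problem; undoing the rescaling preserves Lebesgue measure ratios. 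The hard part is controlling the large-radius portion of $\mathcal{M}_{U'}(|\nabla^s \tilde u|^2)$, since the nonlocal tail of $\tilde u$ is a priori unconstrained, and it is precisely the pointwise hypothesis at $x_0$ that makes this step work.
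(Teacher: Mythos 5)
Your proposal is correct and follows essentially the same route as the paper: rescale to $B_5$, verify the hypotheses of Lemma~\ref{apppl} with $M=(6/5)^n$ via the good point and $B_5\subset B_6(\tilde x_0)$, prove a level-set inclusion by splitting the maximal function into small radii (using the $L^\infty$ bound on $\nabla^s v$) and large radii (using the pointwise control at $\tilde x_0$), and conclude with the weak $(1,1)$ estimate. The only differences are cosmetic choices of constants (e.g.\ $B_{3\rho}(\tilde x_0)$ giving $3^n$ in the large-radius regime, and the threshold $1$ instead of $N_0^2$ in the inclusion), which if anything make the covering step slightly cleaner.
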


\begin{proof}
Let $\theta >0$ and $M>0$ to be chosen and consider the corresponding $\delta = \delta(\theta,n,s,\lambda,\Lambda,M) > 0$ given by Lemma $\ref{apppl}$. 
Fix $r \in (0,1]$ and $z \in \mathbb{R}^n$. For any $x \in U^\prime := \{\frac{x-z}{r} \mid x \in U\}$, define 
\begin{align*}
\widetilde A (x,y):= A (rx+z,ry+z)=A(rx,ry), \quad \widetilde D_i (x,y):= D_i (rx+z,ry+z),\\
\widetilde u (x) := r^{-s} u(rx+z), \quad \widetilde g_i (x) := r^{-s} g_i(rx+z), \quad \widetilde f (x) := r^s f(rx+z)
\end{align*}
and note that under the above assumptions 
$\widetilde A$ belongs to the class $\mathcal{L}_1(\lambda)$ and that $\widetilde u \in H^s(B_5 | \mathbb{R}^n)$ satisfies 
$$ L_{\widetilde A} \widetilde u = \sum_{i=1}^m L_{\widetilde D_i} \widetilde g_i + \widetilde f \text { weakly in } B_5. $$
Hence, by Lemma \ref{apppl} there exists a weak solution $\widetilde v \in H^s(B_5 | \mathbb{R}^n)$ of 
$$ L_{\widetilde A}\widetilde v = 0 \text{ in } B_5 $$
such that 
\begin{equation} \label{apss}
\int_{B_2} |\nabla^s(\widetilde u -\widetilde v)|^2 dx \leq \theta^2,
\end{equation}
provided that the conditions $(\ref{conddddd})$ and $(\ref{condddddd})$ are satisfied.
By assumption, there exists a point $x \in B_r(z)$ such that 
$$\mathcal{M}_{U} (|\nabla^s u|^2)(x) \leq 1, \quad \mathcal{M}_{U} \left (|f|^{2} + \sum_{i=1}^m |\nabla^s g_i|^2 \right )(x) \leq \delta^2.$$
By the scaling and translation invariance of the Hardy-Littlewood maximal function (Lemma $\ref{scale}$), for the point $x_0:=\frac{x-z}{r} \in B_1$ we thus have
$$ \mathcal{M}_{U^\prime} (|\nabla^s \widetilde u|^2)(x_0) = \mathcal{M}_{U} (|\nabla^s u|^2)(x) \leq 1$$
and
$$ \mathcal{M}_{U^\prime} \left (|\widetilde f|^{2} + \sum_{i=1}^m |\nabla^s \widetilde g_i|^2 \right )(x_0) = \mathcal{M}_U \left (r^{2s}|f|^{2} + \sum_{i=1}^m |\nabla^s g_i|^2 \right )(x) \leq \delta^2.$$
Therefore, for any $\rho>0$ we have 
\begin{equation} \label{lum}
\dashint_{B_\rho(x_0)} |\nabla^s \widetilde u|^2 dx \leq 1, \quad \dashint_{B_\rho(x_0)} \left ( |\widetilde f|^{2}+ \sum_{i=1}^m |\nabla^s \widetilde g_i|^2 \right ) dx \leq \delta^2,
\end{equation}
where the values of $\nabla^s \widetilde u$, $\nabla^s \widetilde g_i$ and $\widetilde f$ outside of $U^\prime$ are replaced by $0$, which we also do for the rest of the proof.
Since $B_5 \subset B_6(x_0)$, by $(\ref{lum})$ we have 
$$
\dashint_{B_5} |\nabla^s \widetilde u|^2 dx \leq \frac{|B_6|}{|B_5|} \text{ } \dashint_{B_6(x_0)} |\nabla^s \widetilde u|^2 dx \leq \left (\frac{6}{5} \right )^n
$$
and
$$
\dashint_{B_5} \left ( |\widetilde f|^2 + \sum_{i=1}^m |\nabla^s \widetilde g_i|^2 \right ) dx \leq \frac{|B_6|}{|B_5|} \text{ } \dashint_{B_6(x_0)} \left ( |\widetilde f|^2 + \sum_{i=1}^m |\nabla^s \widetilde g_i|^2 \right ) dx \leq \left (\frac{6}{5} \right )^n \delta^2,
$$
so that we get that $\widetilde u$, $\widetilde g_i$ and $\widetilde f$ satisfy 
the conditions $(\ref{conddddd})$ and $(\ref{condddddd})$ with $M=\left (\frac{6}{5} \right )^n$. Therefore, (\ref{apss}) is satisfied by $\widetilde u$ and the corresponding approximate solution $\widetilde v$. Considering the function $v \in H^s(U|\mathbb{R}^n)$ given by $v(x):=r^{s} \widetilde v \left (\frac{x-z}{r} \right)$ and rescaling back yields 
\begin{equation} \label{apss91}
\int_{B_{2r}(y)} |\nabla^s(u -v)|^2dx = r^n \int_{B_2} |\nabla^s(\widetilde u -\widetilde v)|^2dx \leq \theta^2 r^n.
\end{equation}
By Lemma $\ref{apppl}$, there exists a constant $N_0=N_0(n,s, \lambda,\Lambda) >0$ such that 
\begin{equation} \label{loclinf}
||\nabla^s \widetilde v||_{L^\infty(B_2)}^2 \leq N_0^2 . 
\end{equation}
Next, we define $N_1 := (\max \{ 4 N_0^2, 3^n \})^{1/2} > 1$ and claim that 
\begin{equation} \label{inclusion}
\left \{ x \in B_1 \mid \mathcal{M}_{U^\prime} ( |\nabla^s \widetilde u|^2 )(x) > N_1^2 \right \} \subset  \left \{ x \in B_1 \mid \mathcal{M}_{B_2} ( |\nabla^s(\widetilde u -\widetilde v)|^2 )(x) > N_0^2 \right \}. 
\end{equation}
To see this, assume that 
\begin{equation} \label{menge}
x_1 \in \left \{ x \in B_1 \mid \mathcal{M}_{B_2} ( |\nabla^s(\widetilde u -\widetilde v)|^2 ) (x) \leq N_0^2 \right \}. 
\end{equation}
For $ \rho < 1$, we have $B_\rho (x_1) \subset B_1(x_1) \subset B_2$, so that together with $(\ref{menge})$ and $(\ref{loclinf})$ we deduce 
\begin{align*}
\dashint_{B_\rho (x_1)} |\nabla^s \widetilde u|^2 dx & \leq 2 \text{ } \dashint_{B_\rho(x_1)} \left ( |\nabla^s (\widetilde u -\widetilde v)|^2 + |\nabla^s \widetilde v|^2 \right )dx \\
& \leq 2 \text{ } \dashint_{B_\rho(x_1)} |\nabla^s (\widetilde u -\widetilde v)|^2 dx + 2 \text{ } ||\nabla^s \widetilde v||_{L^\infty(B_\rho(x_1))}^2 \\
& \leq 2 \text{ } \mathcal{M}_{B_2} (|\nabla^s (\widetilde u -\widetilde v)|^2) (x_1) + 2 \text{ } ||\nabla^s \widetilde v||_{L^\infty(B_2)}^2 \leq 4 N_0^2. 
\end{align*}
On the other hand, for $\rho \geq 1$ we have $ B_\rho (x_1) \subset B_{3 \rho}(x_0)$, so that $(\ref{lum})$ implies 
$$
\dashint_{B_\rho(x_1)} |\nabla^s \widetilde u|^2 dx \leq \frac{|B_{3 \rho}|}{|B_\rho|} \dashint_{B_{3 \rho} (x_0)} |\nabla^s \widetilde u|^2 dx \leq 3^n.
$$
Thus, we have $$ x_1 \in \left \{ x \in B_1 \mid \mathcal{M}_{U^\prime}( |\nabla^s \widetilde u|^2) (x) \leq N_1^2 \right \} ,$$ 
which implies $(\ref{inclusion})$. In view of the scaling and translation invariance of the Hardy-Littlewood maximal function (Lemma $\ref{scale}$), (\ref{inclusion}) is equivalent to
\begin{equation} \label{inclusion91}
\left \{ x \in B_r(z) \mid \mathcal{M}_{U} ( |\nabla^s u|^2 )(x) > N_1^2 \right \} \subset  \left \{ x \in B_r(z) \mid \mathcal{M}_{B_{2r}(z)} ( |\nabla^s (u -v)|^2 )(x) > N_0^2 \right \}. 
\end{equation}
For any $\varepsilon > 0$, using $(\ref{inclusion91})$, the weak $1$-$1$ estimate from Theorem \ref{Maxfun} and $(\ref{apss91})$, we conclude that there exists some constant $C=C(n)>0$ such that 
\begin{align*}
\left | \left \{ x \in B_r(z) \mid \mathcal{M}_U ( |\nabla^s u|^2)(x) >N_1^2 \right \} \right | 
& \leq \left |  \left \{ x \in B_r(z) \mid \mathcal{M}_{B_{2r}(z)} ( |\nabla^s (u -v)|^2 )(x) > N_0^2 \right \} \right | \\
& \leq \frac{C}{N_0^2} \int_{B_{2r}(z)} |\nabla^s(u -v)|^2 dx \\
& \leq \frac{C}{N_0^2} \theta^2 r^n < \varepsilon |B_r|,
\end{align*}
where the last inequality is obtained by choosing $\theta$ and thus also $\delta$ sufficiently small. \newline This finishes our proof.
\end{proof}

\begin{remark} \normalfont
Note that in the above proof, the choice of $\theta$ and thus also the choice of a sufficiently small $\delta$ does not depend on the radius $r$, which is due to the fact that $|B_r|=c r^n$ for some constant $c=c(n)>0$. This is vital in our further proof of the $H^{s,p}$ regularity.
\end{remark}
Next, we refine the statement of Lemma $\ref{mfuse}$ in order make it applicable for proving the assumptions of Lemma $\ref{modVitali}$. 
\begin{cor} \label{applic}
There is a constant $N_1=N_1(n,s,\lambda,\Lambda) > 1$, such that the following holds. For any $\varepsilon > 0$ there exists some $\delta = \delta(\varepsilon,n,s,\lambda,\Lambda) > 0$, 
such that for any $z \in B_1$, any $r \in (0,1)$ and any weak solution $ u \in H^s(B_6 | \mathbb{R}^n)$ 
of the equation 
$$ L_A u = \sum_{i=1}^m L_{D_i} g_i + f \text { in } B_6$$
with
\begin{equation} \label{Lvv}
\left | \left \{ x \in B_r(z) \mid \mathcal{M}_{B_6} (|\nabla^s u|^2)(x) > N_1^2 \right \} \cap B_1 \right | \geq \varepsilon |B_r|,
\end{equation}
we have
\begin{equation} \label{Lvv1}
\begin{aligned}
B_r(z) \cap B_1 \subset & \left \{ x \in B_1 \mid \mathcal{M}_{B_6} (|\nabla^s u|^2)(x) > 1 \right \} \\
 & \cup \left \{ x \in B_1 \mid \mathcal{M}_{B_6} \left (|f|^{2} + \sum_{i=1}^m |\nabla^s g_i|^2 \right )(x) > \delta^2 \right \}.
\end{aligned}
\end{equation}
\end{cor}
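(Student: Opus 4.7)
The plan is to obtain this corollary essentially for free from Lemma \ref{mfuse} by contraposition, with a mild change of the ambient domain. Let $N_1$ be the constant from Lemma \ref{mfuse}, and for a given $\varepsilon>0$ let $\delta=\delta(\varepsilon,n,s,\lambda,\Lambda)$ be the value furnished by that lemma for this same $\varepsilon$. I would then argue by contradiction: suppose the conclusion (\ref{Lvv1}) fails, so there exists a point
$$ y \in (B_r(z)\cap B_1) \setminus \Bigl(\{\mathcal{M}_{B_6}(|\nabla^s u|^2)>1\} \cup \{\mathcal{M}_{B_6}(|f|^2+\textstyle\sum_i|\nabla^s g_i|^2)>\delta^2\}\Bigr) . $$
This $y$ belongs simultaneously to both sets appearing in the hypothesis of Lemma \ref{mfuse} with $U=B_6$, so their intersection over $B_r(z)$ is non-empty.

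Next, I need to verify the geometric hypothesis $B_{5r}(z)\subset U$ of Lemma \ref{mfuse} in order to legitimately apply it with $U=B_6$. Since $z\in B_1$ and $r\in(0,1)$, any $x\in B_{5r}(z)$ satisfies $|x|\le|z|+5r<1+5=6$, giving $B_{5r}(z)\subset B_6$ as required. The remaining hypotheses (the operator equation $L_A u=\sum_i L_{D_i}g_i+f$ in $B_{5r}(z)$, and $u\in H^s(B_{5r}(z)\mid\mathbb{R}^n)$) follow from the corollary's assumption that $u\in H^s(B_6\mid\mathbb{R}^n)$ solves the equation in $B_6$, since $B_{5r}(z)\subset B_6$.

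With all hypotheses of Lemma \ref{mfuse} verified, its conclusion (\ref{ccll}) gives
$$ \bigl|\{x\in B_r(z)\,:\,\mathcal{M}_{B_6}(|\nabla^s u|^2)(x)>N_1^2\}\bigr| < \varepsilon\,|B_r|. $$
But this directly contradicts the assumption (\ref{Lvv}), since the set measured in (\ref{Lvv}) is a subset of the one above. This contradiction shows that no such $y$ can exist, which is precisely the claimed inclusion (\ref{Lvv1}). There is no real obstacle here beyond bookkeeping; the only thing to watch is that the $\varepsilon$ driving the choice of $\delta$ in Lemma \ref{mfuse} is the same $\varepsilon$ that appears in the hypothesis (\ref{Lvv}), which is built into the statement of the corollary and so causes no trouble.
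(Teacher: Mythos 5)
Your proposal is correct and follows essentially the same argument as the paper: contradiction from a point of $B_r(z)\cap B_1$ lying in neither exceptional set, application of Lemma \ref{mfuse} with $U=B_6$ (after checking $B_{5r}(z)\subset B_6$ from $z\in B_1$, $r<1$), and the resulting measure bound contradicting (\ref{Lvv}). No issues.
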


\begin{proof}
Let $N_1=N_1(n,s,\lambda,\Lambda)>1$ be given by Lemma $\ref{mfuse}$.
Fix $\varepsilon > 0$, $r \in (0,1)$, $z \in \mathbb{R}^n$ and consider the corresponding $\delta= \delta (\varepsilon,n,s,\lambda,\Lambda)>0$ given by Lemma $\ref{mfuse}$.
We argue by contradiction.
Assume that $(\ref{Lvv})$ is satisfied but that $(\ref{Lvv1})$ is false, so that there exists some $x_0 \in B_r(z) \cap B_1$ such that 
\begin{align*} 
x_0 \in B_r(z) & \cap \left \{ x \in B_1 \mid \mathcal{M}_{B_{6}} (|\nabla^s u|^2)(x) \leq 1 \right \} \\
& \cap \left \{ x \in B_1 \mid \mathcal{M}_{B_6} \left (|f|^{2} + \sum_{i=1}^m |\nabla^s g_i|^2 \right )(x) \leq \delta^2 \right \} \\
\subset &  \left \{ x \in B_r(z) \mid \mathcal{M}_{B_6} (|\nabla^s u|^2)(x) \leq 1 \right \} \\
& \cap \left \{ x \in B_r(z) \mid \mathcal{M}_{B_6} \left (|f|^{2} + \sum_{i=1}^m |\nabla^s g_i|^2 \right )(x) \leq \delta^2 \right \} .
\end{align*}
Since moreover we have $B_{5r}(z) \subset B_6$, Lemma $\ref{mfuse}$ with $U=B_6$ yields 
\begin{align*}
& \left | \left \{ x \in B_r(z) \mid \mathcal{M}_{B_6} (|\nabla^s u|^2)(x) > N_1^2 \right \} \cap B_1 \right | \\
\leq & \left | \left \{ x \in B_r(z) \mid \mathcal{M}_{B_6} (|\nabla^s u|^2)(x) > N_1^2 \right \} \right | < \varepsilon |B_r|,
\end{align*}
which contradicts $(\ref{Lvv})$.
\end{proof}

\begin{lem} \label{aplvi}
Let $N_1=N_1(n,s,\lambda,\Lambda) > 1$ be given by Corollary $\ref{applic}$.
Moreover, let $k \in \mathbb{N}$, $\varepsilon \in (0,1)$, set $\varepsilon_1 := 10^n \varepsilon$ 
and consider the corresponding $\delta = \delta(\varepsilon,n,s,\lambda,\Lambda)>0$ given by Corollary $\ref{applic}$.
Then for any weak solution $ u \in H^s(B_6 | \mathbb{R}^n)$ 
of the equation 
$$ L_A u = \sum_{i=1}^m L_{D_i} g_i + f \text { in } B_6$$
with
\begin{equation} \label{air}
\left | \left \{ x \in B_1 \mid \mathcal{M}_{B_6} (|\nabla^s u|^2)(x) > N_1^2 \right \} \right | < \varepsilon |B_1| ,
\end{equation}
we have 
\begingroup
\allowdisplaybreaks
\begin{align*}
& \left | \left \{ x \in B_1 \mid \mathcal{M}_{B_6} (|\nabla^s u|^2)(x) > N_1^{2k} \right \} \right | \\
\leq & \sum_{j=1}^k \varepsilon_1^j \left | \left \{ x \in B_1 \mid \mathcal{M}_{B_6} \left (|f|^{2} + \sum_{i=1}^m |\nabla^s g_i|^2 \right )(x) > \delta^2 N_1^{2(k-j)} \right \} \right | \\ 
& + \varepsilon_1^k \left | \left \{ x \in B_1 \mid \mathcal{M}_{B_6} (|\nabla^s u|^2)(x) > 1 \right \} \right | .
\end{align*}
\endgroup
\end{lem}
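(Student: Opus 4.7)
The plan is to prove the estimate by induction on $k$, where the engine of the induction is a rescaling of the solution combined with the Vitali-type Lemma \ref{modVitali} fed by Corollary \ref{applic}. For brevity, set
$$ A_k := \Set*{x \in B_1 \given \mathcal{M}_{B_6}(|\nabla^s u|^2)(x) > N_1^{2k}}, \quad B_k := \Set*{x \in B_1 \given \mathcal{M}_{B_6}\bigl(|f|^2 + \sum_{i=1}^m |\nabla^s g_i|^2\bigr)(x) > \delta^2 N_1^{2k}}. $$
Since $N_1 > 1$, the sets $A_k$ are nested: $A_{k+1} \subset A_k$ for all $k \geq 0$. The target inequality then reads
$$ |A_k| \leq \sum_{j=1}^{k} \varepsilon_1^j \, |B_{k-j}| + \varepsilon_1^k \, |A_0|, $$
and the core claim I would establish is the one-step bound
$$ |A_{k+1}| \leq \varepsilon_1 \bigl( |A_k| + |B_k| \bigr) \quad \text{for every } k \geq 0. \qquad (\ast) $$
Granted $(\ast)$, the stated estimate follows by a routine unrolling: iterate $(\ast)$ exactly $k$ times and collect geometric factors.

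To prove $(\ast)$, I would rescale. The equation is linear, so for the rescaled data
$$ \tilde{u} := N_1^{-k} u, \quad \tilde{g}_i := N_1^{-k} g_i, \quad \tilde{f} := N_1^{-k} f $$
one still has $L_A \tilde{u} = \sum_{i=1}^{m} L_{D_i} \tilde{g}_i + \tilde{f}$ in $B_6$. Because $\mathcal{M}_{B_6}$ is homogeneous of degree $2$ with respect to the squared quantities under this rescaling, the sets in question transform as
$$ A_{k+1} = \Set*{x \in B_1 \given \mathcal{M}_{B_6}(|\nabla^s \tilde{u}|^2)(x) > N_1^{2}}, \quad A_k = \Set*{x \in B_1 \given \mathcal{M}_{B_6}(|\nabla^s \tilde{u}|^2)(x) > 1}, $$
and $B_k$ is the analogue for $\tilde{f}, \tilde{g}_i$ at level $\delta^2$. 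Thus the triple $(\tilde{u}, \tilde{g}_i, \tilde{f})$ puts us in precisely the setting of Corollary \ref{applic}.

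Now I would apply Lemma \ref{modVitali} with $E := A_{k+1}$ and $F := A_k \cup B_k$. The inclusion $E \subset F \subset B_1$ holds since $N_1^2 > 1$. The smallness hypothesis $|E| < \varepsilon |B_1|$ follows from the monotonicity $A_{k+1} \subset A_1$ together with the assumption (\ref{air}). For the key covering hypothesis, suppose $z \in B_1$ and $r \in (0,1)$ satisfy $|E \cap B_r(z)| \geq \varepsilon |B_r|$; applying Corollary \ref{applic} to $(\tilde{u}, \tilde{g}_i, \tilde{f})$ with this $z$ and $r$ then yields $B_r(z) \cap B_1 \subset A_k \cup B_k = F$. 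Lemma \ref{modVitali} now gives $|A_{k+1}| = |E| \leq 10^n \varepsilon |F| \leq \varepsilon_1 (|A_k| + |B_k|)$, which is $(\ast)$.

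The only delicate point, and the main thing to verify, is that the hypothesis $|A_{k+1}| < \varepsilon |B_1|$ of Lemma \ref{modVitali} survives at every induction step, independently of $k$; this is why the monotonicity $A_{k+1} \subset A_1$ combined with (\ref{air}) is crucial. Everything else — the scale-invariance of the equation, the scale-invariance of $\mathcal{M}_{B_6}$ under the pointwise rescaling of the data (noting we are not rescaling the domain here, only the amplitudes), and the purely algebraic unrolling of the recursion $(\ast)$ — is routine.
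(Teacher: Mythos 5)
Your proposal is correct and takes essentially the same route as the paper: your one-step recursion $(\ast)$, obtained by rescaling $u,g_i,f$ by a power of $N_1$ (using linearity) and feeding Corollary \ref{applic} into Lemma \ref{modVitali}, is exactly the engine of the paper's induction, where the same rescaling is performed one factor of $N_1$ at a time and the recursion is unrolled through the induction hypothesis instead of explicitly. Your verification of the smallness hypothesis of Lemma \ref{modVitali} via $A_{k+1}\subset A_1$ together with (\ref{air}) corresponds to the paper's use of $N_1>1$ in the inductive step, so no gap remains.
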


\begin{proof}
We proof this Lemma by induction on $k$. 
In view of $(\ref{air})$ and Corollary $\ref{applic}$, the case $k=1$ is a direct consequence of Lemma $\ref{modVitali}$
applied to the sets 
$$ E := \left \{ x \in B_1 \mid \mathcal{M}_{B_6} (|\nabla^s u|^2)(x) > N_1^2 \right \}  $$
and
\begin{align*}
F:= \left \{ x \in B_1 \mid \mathcal{M}_{B_6} (|\nabla^s u|^2)(x) > 1 \right \} & \cup \left \{ x \in B_1 \mid \mathcal{M}_{B_6} \left (|f|^{2} + \sum_{i=1}^m |\nabla^s g_i|^2 \right )(x) > \delta^2 \right \} .
\end{align*}
Next, assume that the conclusion is valid for some $k \in \mathbb{N}$.
Define $\widehat u := u/N_1$, $\widehat g_i := g_i/N_1$ and $\widehat f := f/N_1$. Then $\widehat u$ clearly satisfies  
$$ L_A \widehat u = \sum_{i=1}^m L_{D_i} \widehat g_i + \widehat f \text { weakly in } B_6.$$
Moreover, since $N_1 > 1$ we have 
\begin{align*}
\left | \left \{ x \in B_1 \mid \mathcal{M}_{B_6} (|\nabla^s \widehat u|^2)(x) > N_1^2 \right \} \right | 
& = \left | \left \{ x \in B_1 \mid \mathcal{M}_{B_6} (|\nabla^s u|^2)(x) > N_1^4 \right \} \right | \\
& \leq \left | \left \{ x \in B_1 \mid \mathcal{M}_{B_6} (|\nabla^s u|^2)(x) > N_1^2 \right \} \right |< \varepsilon |B_1|.
\end{align*}
Thus, using the induction assumption yields 
\begin{align*}
& \left | \left \{ x \in B_1 \mid \mathcal{M}_{B_6} (|\nabla^s u|^2)(x) > N_1^{2(k+1)} \right \} \right | \\
= & \left | \left \{ x \in B_1 \mid \mathcal{M}_{B_6} (|\nabla^s \widehat u|^2)(x) > N_1^{2k} \right \} \right | \\
\leq & \sum_{j=1}^k \varepsilon_1^j \left | \left \{ x \in B_1 \mid \mathcal{M}_{B_6} \left (|\widehat f|^{2} + \sum_{i=1}^m |\nabla^s \widehat g_i|^2 \right )(x) > \delta^2 N_1^{2(k-j)} \right \} \right | \\
& + \varepsilon_1^k \left | \left \{ x \in B_1 \mid \mathcal{M}_{B_6} (|\nabla^s \widehat u|^2)(x) > 1 \right \} \right | \\
= & \sum_{j=1}^k \varepsilon_1^j \left | \left \{ x \in B_1 \mid \mathcal{M}_{B_6} \left (|f|^{2} + \sum_{i=1}^m |\nabla^s g_i|^2 \right )(x) > \delta^2 N_1^{2(k+1-j)} \right \} \right | \\
& + \varepsilon_1^k \left | \left \{ x \in B_1 \mid \mathcal{M}_{B_6} (|\nabla^s u|^2)(x) > N_1^2 \right \} \right |.
\end{align*}
Moreover, by using the case $k=1$ we obtain
\begin{align*}
= & \sum_{j=1}^k \varepsilon_1^j \left | \left \{ x \in B_1 \mid \mathcal{M}_{B_6} \left (|f|^{2} + \sum_{i=1}^m |\nabla^s g_i|^2 \right )(x) > \delta^2 N_1^{2(k+1-j)} \right \} \right | \\
& + \varepsilon_1^k \left | \left \{ x \in B_1 \mid \mathcal{M}_{B_6} (|\nabla^s u|^2)(x) > N_1^2 \right \} \right | \\
\leq & \sum_{j=1}^k \varepsilon_1^j \left | \left \{ x \in B_1 \mid \mathcal{M}_{B_6} \left (|f|^{2} + \sum_{i=1}^m |\nabla^s g_i|^2 \right )(x) > \delta^2 N_1^{2(k+1-j)} \right \} \right | \\
& + \varepsilon_1^k \Bigg ( \varepsilon_1 \left | \left \{ x \in B_1 \mid \mathcal{M}_{B_6} \left (|f|^{2} + \sum_{i=1}^m |\nabla^s g_i|^2 \right )(x) > \delta^2 \right \} \right | \\
& + \varepsilon_1 \left | \left \{ x \in B_1 \mid \mathcal{M}_{B_6} (|\nabla^s u|^2)(x) > 1 \right \} \right | \Big ) \\
= & \sum_{j=1}^{k+1} \varepsilon_1^j \Bigg ( \left | \left \{ x \in B_1 \mid \mathcal{M}_{B_6} \left (|f|^{2} + \sum_{i=1}^m |\nabla^s g_i|^2 \right )(x) > \delta^2 N_1^{2(k+1-j)} \right \} \right | \\
& + \varepsilon_1^{k+1} \left | \left \{ x \in B_1 \mid \mathcal{M}_{B_6} (|\nabla^s u|^2)(x) > 1 \right \} \right | ,
\end{align*}
so that by combining the last two displays we see that the conclusion is valid for $k+1$, which completes the proof.
\end{proof}

We are now set to prove the higher integrability of $\nabla^s u$ in the case of balls. The approach to the proof can be summarized as follows. First of all, we consider an appropriately scaled version of $u$ that satisfies the condition (\ref{air}) from Lemma \ref{aplvi} and also corresponding scaled versions of $g_i$ and $f$. Then we use Lemma \ref{Caff} in order to derive from Lemma \ref{aplvi} the desired $L^p$ estimate in terms of the Hardy-Littlewood maximal functions of the scaled versions of $u$, $g_i$ and $f$, which in view of the strong p-p estimates from Theorem \ref{Maxfun} and rescaling then yields the desired $L^p$ estimate for $\nabla^s u$.
\begin{thm} \label{mainint1}
Let $2<p< \infty$, $g_i \in H^{s,p}(B_6 | \mathbb{R}^n)$ and $f \in L^p(B_6)$. 
If $A$ belongs to $\mathcal{L}_1(\lambda)$ and if all $D_i$ are symmetric and bounded by $\Lambda>0$, then for any weak solution $u \in H^s(B_6 | \mathbb{R}^n)$ 
of the equation 
$$ L_A u = \sum_{i=1}^m L_{D_i} g_i  + f \text{ in } B_6$$ 
we have $\nabla^s u \in L^p(B_1)$. 
Moreover, there exists a constant $C= C(p,n,s,\lambda,\Lambda) >0$ such that
\begin{equation} \label{Lpest98}
||\nabla^s u||_{L^p(B_1)} \leq C \left (||f + \sum_{i=1}^m \nabla^s {g_i}||_{L^p(B_6)} + ||\nabla^s u||_{L^2(B_6)} \right ). 
\end{equation}
\end{thm}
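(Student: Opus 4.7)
The plan is to iterate the decay estimate of Lemma~\ref{aplvi}, convert the resulting distribution function bound for $\mathcal{M}_{B_6}(|\nabla^s u|^2)$ into an $L^{p/2}$ bound via Lemma~\ref{Caff}, and then use the strong $p/2$-$p/2$ maximal function estimate (permissible since $p/2 > 1$) together with the pointwise bound $|\nabla^s u|^2 \leq \mathcal{M}_{B_6}(|\nabla^s u|^2)$ a.e.\ to produce the desired $L^p$ estimate for $\nabla^s u$.

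First I would fix $\varepsilon \in (0,1)$ small enough that $\varepsilon_1 N_1^p = 10^n \varepsilon N_1^p < \tfrac{1}{2}$, where $N_1 = N_1(n,s,\lambda,\Lambda)$ is the constant from Lemma~\ref{aplvi}, and let $\delta = \delta(\varepsilon, n, s, \lambda, \Lambda)$ be the associated constant. Then I would normalize: choose
\[
K := C_0\,\|\nabla^s u\|_{L^2(B_6)} + \delta^{-1}\Bigl(\|f\|_{L^p(B_6)} + \sum_{i=1}^m\|\nabla^s g_i\|_{L^p(B_6)}\Bigr),
\]
with $C_0 = C_0(n,\varepsilon,N_1)$ large enough that, by the weak $1$-$1$ maximal estimate of Theorem~\ref{Maxfun}, the rescaled functions $\tilde u := u/K$, $\tilde g_i := g_i/K$, $\tilde f := f/K$ (which still solve the same equation) satisfy the smallness hypothesis (\ref{air}) of Lemma~\ref{aplvi}. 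The case $K = 0$ is trivial.

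Next, applying Lemma~\ref{aplvi} to $\tilde u$ at each level $k \geq 1$, multiplying by $N_1^{pk}$ and summing in $k$, I would interchange the order of summation via the substitution $l = k - j$. The $j$-sum then collapses into the convergent geometric series $\sum_{j \geq 1}(\varepsilon_1 N_1^p)^j$, while the remaining sum in $l$ has precisely the form needed for the converse direction of Lemma~\ref{Caff} (with $\tau = \delta^2$, $\beta = N_1^2$ and exponent $p/2$); the residual tail $\sum_k (\varepsilon_1 N_1^p)^k |B_1|$ is finite. Combining with the forward direction of Lemma~\ref{Caff} applied to the left-hand side yields
\[
\Bigl\|\mathcal{M}_{B_6}(|\nabla^s \tilde u|^2)\Bigr\|_{L^{p/2}(B_1)}^{p/2} \leq C\,\Bigl\|\mathcal{M}_{B_6}\Bigl(|\tilde f|^2 + \sum_{i=1}^m|\nabla^s \tilde g_i|^2\Bigr)\Bigr\|_{L^{p/2}(B_1)}^{p/2} + C.
\]
The strong $p/2$-$p/2$ maximal estimate then converts the right-hand side into an $L^p$ bound for $|\tilde f| + \sum_i|\nabla^s \tilde g_i|$ on $B_6$, after which rescaling by $K$ produces (\ref{Lpest98}).

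The main obstacle I expect is the combinatorial rearrangement of the iterated level-set estimate into a single convergent geometric series that can be absorbed on the left---the choice $10^n\varepsilon N_1^p < 1$ is the essential quantitative input and must be made \emph{before} the normalization, so that $\delta$ is then fixed once and for all. A secondary but crucial point is scale invariance: the constants in Lemma~\ref{aplvi} and Corollary~\ref{applic} depend only on $\varepsilon, n, s, \lambda, \Lambda$ and not on the initial size of the data, which is exactly what allows the normalization procedure to deliver a final estimate whose constant depends only on $p, n, s, \lambda, \Lambda$.
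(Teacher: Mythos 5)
Your proposal is correct and follows essentially the same route as the paper's proof: normalize (there by a factor $\gamma/\|\nabla^s u\|_{L^2(B_6)}$, here by your $K$) so that the weak $1$-$1$ maximal estimate secures hypothesis (\ref{air}), iterate Lemma \ref{aplvi}, absorb the level-set sums via the geometric series coming from $10^n\varepsilon N_1^p\le\tfrac12$ (the paper phrases your reindexing $l=k-j$ as a Cauchy product), and then pass back and forth with Lemma \ref{Caff} and the strong $p/2$-$p/2$ maximal estimates before rescaling. The only deviations are cosmetic, e.g.\ your normalization constant $K$ also incorporates $\delta^{-1}$ times the data norms, which is harmless.
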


\begin{proof}
Fix $p>2$ and let $N_1=N_1(n,s,\lambda,\Lambda) > 1$ be given by Lemma $\ref{aplvi}$. Moreover, select $\varepsilon \in (0,1)$ such that 
\begin{equation} \label{dwn}
N_1^p 10^n \varepsilon \leq \frac{1}{2}.
\end{equation}
Consider also the corresponding $\delta = \delta(\varepsilon,n,s,\lambda,\Lambda)>0$ given by Corollary $\ref{applic}$.
If $\nabla^s u=0$ a.e$.$ in $B_6$, then the assertion is trivially satisfied, so that we can assume $||\nabla^s u||_{L^2(B_6)} > 0$.
Next, we define 
$$\widehat{u} := \frac{\gamma u}{||\nabla^s u||_{L^2(B_6)}}, \text{ } \widehat{g_i} := \frac{\gamma g_i}{||\nabla^s u||_{L^2(B_6)}} \text{ and } \widehat{f} := \frac{\gamma f}{||\nabla^s u||_{L^2(B_6)}},$$ 
where $\gamma >0$ remains to be chosen independently of $u$, $g_i$ and $f$,
note that we have 
$$ L_A \widehat{u} = \sum_{i=1}^m L_{D_i} \widehat{g_i} + \widehat{f} \text{ weakly in } B_6.$$
Moreover, we have
$$ \int_{B_6} |\nabla^s \widehat{u}|^2 dx = \gamma^2.$$
Combining this observation with the weak $1$-$1$ estimate from Theorem \ref{Maxfun}, it follows that there is a constant $C_1=C_1(n)>0$ such that 
$$\left | \left \{ x \in B_1 \mid \mathcal{M}_{B_6} (|\nabla^s \widehat{u}|^2)(x) > N_1^2 \right \} \right | \leq \frac{C_1}{N_1^2} \int_{B_6} |\nabla^s \widehat{u}|^2 dx 
= \frac{C_1 \gamma^2}{N_1^2} < \varepsilon |B_1|, $$
where the last inequality is obtained by choosing $\gamma$ small enough.
Therefore, all assumptions made in Lemma $\ref{aplvi}$ are satisfied by $\widehat u$.
Furthermore, in view of Lemma $\ref{Caff}$ with $\tau=\delta^2$, $\beta=N_1^2$ and with $p$ replaced by $p/2$, and also taking into account the strong $p$-$p$ estimates for the Hardy-Littlewood maximal function (cf. Theorem \ref{Maxfun}), we deduce that there exist constants 
$C_2=C_2(n,s, \lambda, \Lambda, p)>0$ and $C_3=C_3(n,p)>0$ such that 
\begin{equation} \label{scn}
\begin{aligned}
& \sum_{k=1}^\infty N_1^{pk} \left | \left \{ x \in B_1 \mid \mathcal{M}_{B_6} \left (|\widehat f|^{2} + \sum_{i=1}^m |\nabla^s \widehat g_i|^2 \right )(x) > \delta^2 N_1^{2k} \right \} \right | \\
\leq & C_2 ||\mathcal{M}_{B_6} \left (|\widehat f|^{2} + \sum_{i=1}^m |\nabla^s \widehat g_i|^2 \right )||_{L^{p/2}(B_6)}^{p/2} \\
\leq & C_2 C_3^{p} ||\widehat f + \sum_{i=1}^m \nabla^s \widehat{g_i}||_{L^p(B_6)}^{p}.
\end{aligned}
\end{equation}
Setting $\varepsilon_1 := 10^n \varepsilon$, by $(\ref{dwn})$ we see that 
\begin{equation} \label{Slh}
\sum_{j=1}^\infty (N_1^p \varepsilon_1)^{j} \leq \sum_{j=1}^\infty \left ( \frac{1}{2} \right )^{j} = 1.
\end{equation}
Using Lemma $\ref{aplvi}$, the Cauchy product, $(\ref{Slh})$, $(\ref{scn})$, and setting 
$C_4 := C_2 C_3^{p}$, we compute 
\begingroup
\allowdisplaybreaks
\begin{align*} 
& \sum_{k=1}^\infty N_1^{pk} \left | \left \{ x \in B_1 \mid \mathcal{M}_{B_6} (|\nabla^s \widehat{u}|^2)(x) > N_1^{2k} \right \} \right | \\
\leq & \sum_{k=1}^\infty N_1^{pk} \Bigg ( \sum_{j=1}^k \varepsilon_1^j \left 
| \left \{ x \in B_1 \mid \mathcal{M}_{B_6} \left (|\widehat f|^{2}+\sum_{i=1}^m |\nabla^s \widehat{g_i}|^2 \right )(x)> \delta^2 N_1^{2(k-j)} \right \} \right | \\
& + \varepsilon_1^k \left | \left \{ x \in B_1 \mid \mathcal{M}_{B_6} (|\nabla^s \widehat{u}|^2)(x) > 1 \right \} \right | \Bigg ) \\
= & \left ( \sum_{k=0}^\infty N_1^{pk} \left | \left \{ x \in B_1 \mid \mathcal{M}_{B_6} \left (|\widehat f|^{2}+\sum_{i=1}^m |\nabla^s \widehat{g_i}|^2 \right )(x) > \delta^2 N_1^{2k} \right \} \right | \right) 
\left ( \sum_{j=1}^\infty (N_1^p \varepsilon_1)^{j} \right ) \\
& + \left ( \sum_{k=1}^\infty (N_1^p \varepsilon_1)^{k} \right ) \left | \left \{ x \in B_1 \mid \mathcal{M}_{B_6} (|\nabla^s \widehat{u}|^2)(x) > 1 \right \} \right | \\
\leq & \left ( \sum_{k=1}^\infty N_1^{pk} \left | \left \{ x \in B_1 \mid \mathcal{M}_{B_6} \left (|\widehat f|^{2}+\sum_{i=1}^m |\nabla^s \widehat{g_i}|^2 \right )(x) > \delta^2 N_1^{2k} \right \} \right | + 2 |B_1| \right ) 
\left ( \sum_{j=1}^\infty (N_1^p \varepsilon_1)^{j} \right ) \\
\leq & \sum_{k=1}^\infty N_1^{pk} \left | \left \{ x \in B_1 \mid \mathcal{M}_{B_6} \left (|\widehat f|^{2}+\sum_{i=1}^m |\nabla^s \widehat{g_i}|^2 \right )(x) > \delta^2 N_1^{2k} \right \} \right | + 2 |B_1| \\
\leq & C_4  ||\widehat f + \sum_{i=1}^m \nabla^s \widehat{g_i}||_{L^p(B_6)}^{p} + 2 |B_1| .
\end{align*}
\endgroup
Therefore, by Theorem \ref{Maxfun} and Theorem $\ref{Caff}$ we find that there exists another constant $C_5=C_5(n,s, \lambda, \Lambda, p)>0$ such that 
\begingroup
\allowdisplaybreaks
\begin{align*}
||\nabla^s \widehat{u}||^p_{L^p(B_1)} & \leq ||\mathcal{M}_{B_6}(|\nabla^s \widehat{u}|^2)||_{L^{p/2}(B_1)}^{p/2} \\
& \leq C_5 \left ( \sum_{k=1}^\infty N_1^{pk} \left | \left \{ x \in B_1 \mid \mathcal{M}_{B_6} (|\nabla^s \widehat{u}|^2)(x) > N_1^{2k} \right \} \right | 
+ |B_1| \right ) \\ 
& \leq C_5 \left ( C_4\left ( ||\widehat f||_{L^p(B_6)}^{p} + \sum_{i=1}^m ||\nabla^s \widehat{g_i}||_{L^p(B_6)}^{p} \right ) + 3 |B_1| \right ) \\ 
& \leq C_6^p \left ( ||\widehat f + \sum_{i=1}^m \nabla^s \widehat{g_i}||_{L^p(B_6)}^{p} + 1 \right ), 
\end{align*} 
\endgroup
where $C_6 := \left ( C_5 \max \left \{ C_4, 3 |B_1| \right \} \right )^{1/p} $.
It follows that 
$$ ||\nabla^s \widehat{u}||_{L^p(B_1)} \leq C_6 \left ( ||\widehat f + \sum_{i=1}^m \nabla^s \widehat{g_i}||_{L^p(B_6)}^{p} +1 \right )^{1/p} 
\leq C_6 \left ( ||\widehat f + \sum_{i=1}^m \nabla^s \widehat{g_i}||_{L^p(B_6)} +1 \right ), $$
so that
\begin{align*}
||\nabla^s u||_{L^p(B_1)} & \leq C_6 \left ( ||f + \sum_{i=1}^m \nabla^s {g_i}||_{L^p(B_6)} + \frac{||\nabla^s u||_{L^2(B_6)}}{\gamma} \right ) \\
& \leq C_6 \gamma^{-1} \left ( || f + \sum_{i=1}^m \nabla^s {g_i}||_{L^p(B_6)}+ ||\nabla^s u||_{L^2(B_6)} \right ),
\end{align*}
which proves (\ref{Lpest98}) with $C:= C_6 \gamma^{-1}$.
\end{proof}

\section{Proofs of the main results}
In order to state our main result on local regularity in an optimal way, we define the following notion of local weak solutions. 
\begin{defin}
Let $\Omega \subset \mathbb{R}^n$ be a domain. Given $b \in L^\infty_{loc}(\Omega)$, $f \in L^2_{loc}(\Omega)$ and $g_i \in H^s_{loc}(\Omega | \mathbb{R}^n)$, we say that $u \in H^s_{loc}(\Omega | \mathbb{R}^n)$ is a local weak solution to the equation $L_A u + bu = \sum_{i=1}^m L_{D_i} g_i+ f$ in $\Omega$, if 
$$ \mathcal{E}_A(u,\varphi) + (bu,\varphi)_{L^2(\Omega)}= \sum_{i=1}^m \mathcal{E}_{D_i}(g_i,\varphi) + (f,\varphi)_{L^2(\Omega)} \quad \forall \varphi \in H^s_c(\Omega | \mathbb{R}^n),$$
where by $H^s_c(\Omega | \mathbb{R}^n)$ we denote the set of all functions that belong to $H^s(\Omega | \mathbb{R}^n)$ and are compactly supported in $\Omega$.
\end{defin}
In view of the inclusions $$ H^{s,p}(\mathbb{R}^n) \subset H^{s,p}_{loc}(\Omega | \mathbb{R}^n) \subset H^{s,p}_{loc}(\Omega) \subset W^{s,p}_{loc}(\Omega)$$ for $p \in [2,\infty)$ which we discussed in section 3, Theorem \ref{mainint5} follows directly from the following slightly stronger result in terms of the spaces $H^{s,p}_{loc}(\Omega | \mathbb{R}^n)$ defined in section 3.
\begin{thm} \label{mainint3}
Let $\Omega \subset \mathbb{R}^n$ be a domain, $p \in (2,\infty)$, $s \in (0,1)$, $b \in L^\infty_{loc}(\Omega)$, $g_i \in H^{s,p}_{loc}(\Omega | \mathbb{R}^n)$ and $f \in L^{p_\star}_{loc}(\Omega)$, where $p_\star=\max \left \{\frac{pn}{n+ps},2 \right \}$. 
If $A$ belongs to $\mathcal{L}_1(\lambda)$ and if all $D_i$ are symmetric and bounded by $\Lambda>0$, then for any local weak solution $u \in H^s_{loc}(\Omega | \mathbb{R}^n)$ 
of the equation
\begin{equation} \label{nonloceq1}
L_A u + bu= \sum_{i=1}^m L_{D_i} g_i + f \text{ in } \Omega
\end{equation}
we have $u \in H^{s,p}_{loc}(\Omega | \mathbb{R}^n)$. 
\end{thm}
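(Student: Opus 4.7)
The plan is to localize, use a Riesz-potential representation to absorb the low-integrability source $f$ into nonlocal divergence form, and then bootstrap the integrability of $\nabla^s u$ up to the exponent $p$ by iterating Theorem \ref{mainint1} together with Sobolev embedding.

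\textbf{Localization.} It suffices to prove $u, \nabla^s u \in L^p(B_r(x_0))$ for an arbitrary ball with $B_{6r}(x_0) \subset\subset \Omega$. By the translation and scaling used in the proof of Lemma \ref{mfuse}, which preserves the class $\mathcal{L}_1(\lambda)$, the bound (\ref{bound}) on the $D_i$ and the $L^\infty$-boundedness of $b$, I reduce to $r=1$, $x_0=0$. Since $u$ is a local weak solution and every $\varphi \in H^s_0(B_6|\mathbb{R}^n)$ has compact support in $\Omega$, the equation $L_A u + bu = \sum L_{D_i} g_i + f$ holds weakly on $B_6$ in the sense of the definition preceding Proposition \ref{Dirichlet}.

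\textbf{Absorbing $f$ into divergence form.} Let $\eta$ be a smooth cutoff with $\eta \equiv 1$ on $B_6$ and compact support, so $\eta f \in L^{p_\star}(\mathbb{R}^n)$. The constant kernel $A_0 \equiv c_{n,s}/2$ lies in $\mathcal{L}_1(\lambda)$ (after possibly enlarging $\lambda$) and satisfies $L_{A_0} = (-\Delta)^s$. Taking $h := \mathcal{F}^{-1}\bigl(|\xi|^{-2s}\mathcal{F}(\eta f)\bigr)$ yields $L_{A_0} h = f$ on $B_6$, while $(-\Delta)^{s/2} h$ coincides with the Riesz potential $J_s(\eta f)$. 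The Hardy--Littlewood--Sobolev inequality, combined with Theorem \ref{altcharBessel} and the standard inclusions between Bessel potential spaces on bounded domains, then gives $\nabla^s h \in L^p(B_6)$. Appending $(A_0,h)$ to $(D_i,g_i)$, the equation on $B_6$ becomes
\begin{equation*}
L_A u = \sum_{i=1}^{m+1} L_{\widetilde D_i} \widetilde g_i - bu, \qquad \nabla^s \widetilde g_i \in L^p(B_6),
\end{equation*}
so that only the zero-order source $-bu$ remains to be controlled.

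\textbf{Bootstrap.} Starting from $u \in H^s(B_6|\mathbb{R}^n) \hookrightarrow L^{2^*}_{loc}(\Omega)$ via Theorem \ref{BesselEmbedding}, suppose inductively that $u \in L^{q_k}_{loc}$ with $2^* \leq q_k < p$. Then $-bu \in L^{q_k}_{loc}$, and a rescaled application of Theorem \ref{mainint1} with target exponent $q_k$ on a slightly smaller concentric ball yields $\nabla^s u \in L^{q_k}_{loc}$. Hence $u \in H^{s,q_k}_{loc} \hookrightarrow L^{q_{k+1}}_{loc}$ by Theorem \ref{BesselEmbedding}, where $q_{k+1} := q_k n/(n-sq_k)$ if $sq_k < n$ and any finite exponent otherwise. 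Since $1/q_{k+1} = 1/q_k - s/n$ in the subcritical regime, after finitely many iterations on a shrinking family of balls one reaches $q_k \geq p$, and a final application of Theorem \ref{mainint1} with exponent $p$ gives $\nabla^s u \in L^p$ on the desired inner ball.

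\textbf{Main obstacle.} The essential technical difficulty is bridging the gap between the source hypothesis $f \in L^{p_\star}_{loc}$ in Theorem \ref{mainint3} and the stronger $L^p$-source hypothesis required by Theorem \ref{mainint1}. The Riesz-potential representation $f = L_{A_0} h$ with $\nabla^s h \in L^p$ closes this gap, and the fact that $(-\Delta)^s$ corresponds to a translation-invariant kernel in $\mathcal{L}_1(\lambda)$ is precisely what makes the absorbed source structurally compatible with the hypothesis $A \in \mathcal{L}_1(\lambda)$ underlying Theorem \ref{mainint1}.
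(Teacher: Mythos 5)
Your overall strategy is essentially the one the paper uses: reduce to balls by the scaling/translation of Lemma \ref{mfuse}, treat the zero-order term $bu$ as part of the source, apply Theorem \ref{mainint1} at increasing exponents, and climb from $u\in L^{2^*}_{loc}$ to $u\in L^p_{loc}$ via Theorem \ref{altcharBessel} and the embeddings of Theorem \ref{BesselEmbedding}; likewise, the paper also handles the weak hypothesis $f\in L^{p_\star}_{loc}$ by writing $f$ as a fractional-Laplacian image of a potential with $H^{s,p}$ regularity. The only substantive difference is \emph{how} that potential is produced, and this is where your write-up has a gap.

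You take the Riesz potential $h=\mathcal{F}^{-1}\bigl(|\xi|^{-2s}\mathcal{F}(\eta f)\bigr)$ and claim that Hardy--Littlewood--Sobolev together with Theorem \ref{altcharBessel} ``and standard inclusions on bounded domains'' gives $\nabla^s h\in L^p(B_6)$. As written this does not follow. Theorem \ref{mainint1} needs $h\in H^{s,p}(B_6|\mathbb{R}^n)$, i.e. control of the \emph{global} s-gradient $\nabla^s h$ on $B_6$, whereas Theorem \ref{altcharBessel} applied on the bounded domain $B_6$ (or any $B_R$) only controls the localized quantity $\nabla^s_{B_R}h$ and says nothing about the tail $\int_{\mathbb{R}^n\setminus B_R}(h(x)-h(y))^2|x-y|^{-n-2s}\,dy$. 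Applying Theorem \ref{altcharBessel} on $\mathbb{R}^n$ instead would require $h\in L^p(\mathbb{R}^n)$, which in general fails: HLS gives $I_{2s}(\eta f)\in L^r(\mathbb{R}^n)$ only for $\tfrac1r=\tfrac1{p_\star}-\tfrac{2s}{n}<\tfrac1p$, and indeed $h$ decays only like $|x|^{2s-n}$, so $h\in L^p(\mathbb{R}^n)$ needs $p(n-2s)>n$, which can fail (e.g. $p$ close to $2$ and $2s$ close to $n$). The gap is fixable: either estimate the tail by hand, using $|h(y)|\le C\|\eta f\|_{L^1}|y|^{2s-n}$ for large $|y|$, so that for $x\in B_6$ the tail contributes only a constant plus a multiple of $|h(x)|$, with $h\in L^p(B_6)$ by local regularity; or, more cleanly, do what the paper does and replace the Riesz potential by the Bessel potential, i.e. solve $(-\Delta)^s g+g=f_\Omega$ in $\mathbb{R}^n$ (Proposition \ref{Dirichlet}), so that the classical estimate gives $g\in H^{2s,p_\star}(\mathbb{R}^n)\hookrightarrow H^{s,p}(\mathbb{R}^n)$ and Theorem \ref{altcharBessel} applies globally, yielding $\nabla^s g\in L^p(\mathbb{R}^n)$ at the price of a harmless extra zero-order term $g\in L^p$ on the right-hand side. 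With that step repaired, your localization and bootstrap coincide with the paper's argument (which organizes the iteration through a covering and partition of unity on a smooth intermediate domain rather than concentric balls, but this is cosmetic).
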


\begin{proof}
Fix $p \in (2,\infty)$. We first prove the result under the stronger assumption that $f \in L^p_{loc}(\Omega)$.
Fix relatively compact bounded open sets $U \subset \subset V \subset \subset \Omega$. Moreover, fix a smooth domain $U_\star$ such that $U \subset \subset U_\star \subset \subset V$. 
Let $\widetilde f:=f-bu$, so that $u$ is a local weak solution of
\begin{equation} \label{locweak}
L_A u = \sum_{i=1}^m L_{D_i} g_i + \widetilde f \text{ in } \Omega .
\end{equation}
In particular, $u$ is a weak solution of (\ref{locweak}) in $V$.
For any $z \in V$, fix some small enough $r_z \in (0,1)$ such that $B_{6r_z}(z) \subset \subset V$.
Define 
\begin{align*}
A_z (x,y):= A \left (r_z x+z, r_z y+z \right )=A \left (r_z x, r_z y \right ), \quad {D_i}_z (x,y):= D_i \left (r_z x+z, r_z y+z \right ), \\
u_z (x) := r^{-s}_z u \left (r_z x +z \right ), \quad {g_i}_z (x) := r^{-s}_z g \left (r_z x +z \right ), \quad \widetilde f_z (x) := r^s_z \widetilde f \left (r_z x +z \right )
\end{align*} 
and note that for any $z \in V$, 
\begingroup
\allowdisplaybreaks
$A_z$ belongs to the class $\mathcal{L}_1(\lambda)$ and that $u_z$ satisfies 
$$ L_{A_z} u_z =\sum_{i=1}^m L_{{D_i}_z} {g_i}_z + \widetilde f_z \text { weakly in } B_6.$$
Using Theorem $\ref{mainint1}$, for any $q \in (2,\infty)$ we obtain the estimate 
\begin{align*}
& ||\nabla^s u||_{L^q \left (B_{r_z}(z) \right)} = r_z^{\frac{n}{q}} ||\nabla^s u_z||_{L^q(B_1)} \\
  \leq & r_z^{\frac{n}{q}} C_1 
\left (||\widetilde f_z + \sum_{i=1}^m \nabla^s {g_i}_z||_{L^q(B_6)} + ||\nabla^s u_z||_{L^2(B_6)} \right ) \\
= & C_1 \left (||r^s_z \widetilde f + \sum_{i=1}^m \nabla^s g_i||_{L^q(B_{6r_z}(z))} + r_z^{\frac{n}{q}-\frac{n}{2}} ||\nabla^s u||_{L^2(B_{6r_z}(z))} \right ) \\
\leq & C_1 \max \{1, r_z^{\frac{n}{q}-\frac{n}{2}} \} \left (||\widetilde f||_{L^q(B_{6r_z}(z))} + ||\sum_{i=1}^m \nabla^s g_i||_{L^q(B_{6r_z}(z))} + ||\nabla^s u||_{L^2(B_{6r_z}(z))} \right ),
\end{align*}
\endgroup
where $C_1=C_1(q,n,s,\lambda,\Lambda) >0$.
Since $\left \{ B_{r_z}(z) \right \}_{z \in \overline U_\star} $ is an open covering of $\overline U_\star$ and $\overline U_\star$ is compact, there is a finite subcover 
$ \left \{ B_{r_{z_i}}(z_i) \right \}_{i =1}^k$ of $\overline U_\star$ and hence of $U_\star$. 
Let $\{\phi_i \}_{i=1}^k$ be a partition of unity subordinate to the covering $\left \{ B_{r_{z_i}}(z_i) \right \}_{i =1}^k$ of $\overline U_\star$, that is, the $\phi_i$ are non-negative functions on $\mathbb{R}^n$, we have 
$ \phi_i \in C_0^\infty(B_{r_{x_i}}(x_i))$ for all $i=1,...,k$, $\sum_{i=1}^k \phi_j \equiv 1$ in an open neighbourhood of $\overline U_\star$ and $\sum_{i=1}^k \phi_j \leq 1$ in $\mathbb{R}^n$.
Setting $C_2:=C_1 \max \{1, \max_{i=1,...,k} r_{z_i}^{\frac{n}{q}-\frac{n}{2}} \}$ and summing the above estimates over $i=1,...,k$, we conclude 
\begin{align*} 
||\nabla^s u||_{L^q(U_\star)} & =||\sum_{i=1}^k |\nabla^s u| \phi_i ||_{L^q(U_\star)} \\
& \leq \sum_{i=1}^k |||\nabla^s u| \phi_i||_{L^q(B_{r_{z_i}}(z_i))} \\
& \leq \sum_{i=1}^k ||\nabla^s u||_{L^q(B_{r_{z_i}(z_i) })} \\
& \leq \sum_{i=1}^k C_2 \left (||\widetilde f||_{L^q(B_{6r_z}(z))} + ||\sum_{i=1}^m \nabla^s g_i||_{L^q(B_{6r_z}(z))} + ||\nabla^s u||_{L^2(B_{6r_z}(z))} \right ) \\
& \leq \sum_{i=1}^k C_2 \left (||\widetilde f||_{L^q(V)} + ||\sum_{i=1}^m \nabla^s g_i||_{L^q(V)} + ||\nabla^s u||_{L^2(V)} \right ) \\
& = C_2 k \left (||\widetilde f||_{L^q(V)} + ||\sum_{i=1}^m \nabla^s g_i||_{L^q(V)} + ||\nabla^s u||_{L^2(V)} \right ),
\end{align*} 
which implies that for any $q \in (2,\infty)$ we have
\begin{equation} \label{loworderterm1}
||\nabla^s u||_{L^q(U_\star)} \leq C_3 \left (||f ||_{L^q(V)} + || u ||_{L^q(V)} + \sum_{i=1}^m ||\nabla^s {g_i}||_{L^q(V)} + ||\nabla^s u||_{L^2(V)} \right ), 
\end{equation}
where $C_3=C_2 k \max \{1,||b||_{L^\infty(V)} \}$. In particular, since by assumption and Theorem \ref{altcharBessel} we have $f,\nabla^s g_i \in L^p(V)$, for any $q \in [2,p]$ we have $\nabla^s u \in L^q(U_\star)$ whenever $u \in L^q(V)$.
For any $r \in [1,p]$, define
$$ r^\star := \begin{cases} 
\min \{\frac{rn}{n-rs},p\}, & \text{if } rs < n \\
p, & \text{if } rs \geq n ,
\end{cases}$$ 
note that $r^\star \in [1,p]$.
By the embedding theorem of Bessel potential spaces (Theorem \ref{BesselEmbedding}), for any $r \geq 1$ we have
$$ H^{s,r}(U_\star) \hookrightarrow L^{r^\star}(U_\star).$$
Since $u \in H^s(V)$, we have $u \in L^{2^\star}(V)$ and therefore $\nabla^s u \in L^{2^\star}(U_\star)$.
If $p = 2^\star$, we have $u \in L^{p}(U_\star)$, $\nabla^s u \in L^{p}(U_\star)$ and therefore $u \in H^{s,p}(U_\star | \mathbb{R}^n).$ If $p> 2^\star$, then we have $u,\nabla^s_{U_\star} u \in L^{2^\star}(U_\star)$, so that Theorem \ref{altcharBessel} yields $u \in H^{s,2^\star}(U_\star).$ We therefore arrive at $u \in L^{{2^\star}^\star}(U_\star)$. By replacing $U_\star$ with an arbitrary relatively compact smooth open subset of $U_\star$ which contains $U$ if necessary, we therefore obtain $\nabla^s u \in L^{{2^\star}^\star}(U_\star)$. If ${2^\star}^\star = p$, then we have 
$u,\nabla^s_{U_\star} u \in L^{p}(U_\star)$ and therefore $u \in H^{s,p}(U_\star | \mathbb{R}^n).$ If ${2^\star}^\star > p$, then iterating the above procedure also yields $u \in H^{s,p}(U_\star | \mathbb{R}^n)$ and therefore $u \in H^{s,p}(U | \mathbb{R}^n)$ at some point. Since $U$ is an arbitrary relatively compact open subset of $\Omega$, we conclude that $u \in H^{s,p}_{loc}(\Omega | \mathbb{R}^n)$. This finishes the proof when $f \in L^p_{loc}(\Omega)$. 

Next, consider the general case when $f \in L^{p_\star}_{loc}(\Omega)$, where $p_\star = \max \left \{\frac{pn}{n+ps},2 \right \}$. Define the function $f_{\Omega}:\mathbb{R}^n \to \mathbb{R}$ by
$$ f_{\Omega}(x) := \begin{cases} 
f(x), & \text{if } x \in \Omega \\
0, & \text{if } x \in \mathbb{R}^n \setminus \Omega 
\end{cases}$$ 
and note that $f_{\Omega} \in L^{p_\star}(\mathbb{R}^n) \cap L^2(\mathbb{R}^n)$. 
By Proposition \ref{Dirichlet}, there exists a unique weak solution $g \in H^s(\mathbb{R}^n) \subset H^s_{loc}(\Omega | \mathbb{R}^n)$ of the equation
\begin{equation} \label{helpeq}
(-\Delta)^s g + g = f_\Omega \quad \text{in } \mathbb{R}^n,
\end{equation}
where
$$ (-\Delta)^s g(x) = C_{n,s} \int_{\mathbb{R}^n} \frac{g(x)-g(y)}{|x-y|^{n+2s}}dy$$
is the fractional Laplacian.
In view of the classical $H^{2s,p}$ regularity for the fractional Laplacian on the whole space $\mathbb{R}^n$ (cf. for example \cite[Lemma 3.5]{KassMengScott}), we have $g \in H^{2s,p_\star}(\mathbb{R}^n) \hookrightarrow H^{s,p}(\mathbb{R}^n)$ and therefore in particular $g \in H^{s,p}_{loc}(\Omega | \mathbb{R}^n)$. Since furthermore $u$ is a local weak solution of  
$$ L_A u +bu = \left (\sum_{i=1}^m L_{D_i} g_i + (-\Delta)^s g \right ) + g \text{ in } \Omega,$$
by the first part of the proof we obtain that $u \in H^{s,p}_{loc}(\Omega | \mathbb{R}^n)$. This finishes the proof.
\end{proof}

\begin{proof}[Proof of Theorem \ref{mainint0}]
Fix $p \in (2,\infty)$ and let $\delta=\delta(p,n,s,\lambda,\Lambda) >0$ be given by Theorem \ref{mainint1}. We first prove the result under the stronger assumption that $f \in L^2(\mathbb{R}^n) \cap L^p(\mathbb{R}^n)$.
For any $k \in \mathbb{Z}^n$, let $E_k:=B_{\sqrt{n}}(k)$ and $F_k:=B_{2\sqrt{n}}(k)$. We then have $\mathbb{R}^n = \bigcup_{k \in \mathbb{Z}^n} E_k$, moreover, there exists some $N \in \mathbb{N}$ depending only on $n$ such that no point in $\mathbb{R}^n$ is contained in more than $N$ of the balls $F_k$. In other words, we have $\sum_{k \in \mathbb{Z}^d} \chi_{F_k} \leq N$, where $\chi_{F_k}$ is the characteristic function of $F_k$.
\begingroup
\allowdisplaybreaks
Since for $\widetilde f:=f-bu$ we have 
$$ L_A u = \sum_{i=1}^m L_{D_i} g_i + \widetilde f \text{ weakly in } \mathbb{R}^n,$$ 
by the same argument as in the proof of Theorem \ref{mainint3}
for any $k \in \mathbb{Z}^d$ and any $q \in (2,\infty)$ we have 
$$ ||\nabla^s u||_{L^q(E_k)} \leq C \left (||\widetilde f||_{L^q(F_k)} + ||\sum_{i=1}^m \nabla^s g_i||_{L^q(F_k)} + ||\nabla^s u||_{L^2(F_k)} \right ) $$
for some constant $C= C(n,s,q,\lambda,\Lambda) >0$.
It follows that
\begin{align*}
& \int_{\mathbb{R}^n} |\nabla^s u (x)|^q dx
\leq \sum_{k \in \mathbb{Z}^n} \int_{E_k} |\nabla^s u(x)|^q dx \\
\leq & C^q \sum_{k \in \mathbb{Z}^n} \left ( \left ( \int_{F_k} |\widetilde f(x)|^q dx \right )^{\frac{1}{q}} + \left (\int_{F_k} \sum_{i=1}^m |\nabla^s g_i(x)|^q dx \right )^{\frac{1}{q}} + \left ( \int_{F_k} |\nabla^s u(x)|^2 dx \right )^{\frac{1}{2}} \right )^q \\
\leq & C_1 C^q \left ( \left ( \sum_{k \in \mathbb{Z}^n} \int_{F_k} |\widetilde f(x)|^q + \sum_{i=1}^m |\nabla^s g_i(x)|^q dx \right ) + \sum_{k \in \mathbb{Z}^n} \left ( \int_{F_k} |\nabla^s u(x)|^2 dx \right )^{\frac{q}{2}} \right ) \\
\leq & C_1 C^q \left ( \left ( \sum_{k \in \mathbb{Z}^n} \int_{F_k} |\widetilde f(x)|^q + \sum_{i=1}^m |\nabla^s g_i(x)|^q dx \right ) + \left ( \sum_{k \in \mathbb{Z}^n} \int_{F_k} |\nabla^s u(x)|^2 dx \right )^{\frac{q}{2}} \right ) \\
= & C_1 C^q \left ( \left ( \int_{\mathbb{R}^n} \left ( |\widetilde f(x)|^q + \sum_{i=1}^m |\nabla^s g_i(x)|^q \right ) \sum_{k \in \mathbb{Z}^d} \chi_{F_k}(x) dx \right ) + \left ( \int_{\mathbb{R}^n} |\nabla^s u(x)|^2 \sum_{k \in \mathbb{Z}^d} \chi_{F_k}(x) dx \right )^{\frac{q}{2}} \right ) \\
\leq & N^{\frac{q}{2}} C_1 C^q \left ( \left ( \int_{\mathbb{R}^n} |\widetilde f(x)|^q + \sum_{i=1}^m |\nabla^s g_i(x)|^q dx \right ) + \left ( \int_{\mathbb{R}^n} |\nabla^s u(x)|^2 dx \right )^{\frac{q}{2}} \right ) ,
\end{align*}
\endgroup
where $C_1=C_1(q)>0$.
This implies that for any $q \in (2,\infty)$ we have
\begin{equation} \label{loworderterm}
||\nabla^s u||_{L^q(\mathbb{R}^n)} \leq C_2 \left (|| f ||_{L^q(\mathbb{R}^n)} + || u ||_{L^q(\mathbb{R}^n)} + \sum_{i=1}^m ||\nabla^s {g_i}||_{L^q(\mathbb{R}^n)} + ||\nabla^s u||_{L^2(\mathbb{R}^n)} \right ), 
\end{equation}
where $C_2:=N^{\frac{1}{2}} C_1^{\frac{1}{q}} C \max\{1,||b||_{L^\infty(\mathbb{R}^n)}\}$. In particular, since for any $q \in [2,p]$ we have $L^2(\mathbb{R}^n) \cap L^p(\mathbb{R}^n) \hookrightarrow L^q(\mathbb{R}^n)$ and in view of the assumptions and Theorem \ref{altcharBessel} we have $f,\nabla^s g_i \in L^2(\mathbb{R}^n) \cap L^p(\mathbb{R}^n)$, for any $q \in [2,p]$ it follows that $\nabla^s u \in L^q(\mathbb{R}^n)$ whenever $u \in L^q(\mathbb{R}^n)$.
The proof in the case when  $f \in L^2(\mathbb{R}^n) \cap L^p(\mathbb{R}^n)$ can now be concluded by using essentially the same iteration argument as the one in the proof of Theorem \ref{mainint3}. The general case when $f \in L^2(\mathbb{R}^n) \cap L^{p_\star}(\mathbb{R}^n)$ then can once again be treated by solving the equation (\ref{helpeq}) under optimal regularity, as we did in the proof of Theorem \ref{mainint3}.
\end{proof}

\begin{remarkb} \normalfont
An interesting question is if it is possible to prove a global $H^{s,p}$ regularity result in smooth enough bounded domains $\Omega$ corresponding to our local regularity result Theorem \ref{mainint3}. Our approach is based on a $C^{s+\gamma}$ estimate $(\gamma>0)$ for nonlocal equations with translation invariant kernels, however it is known that already in the case of the fractional Laplacian in a unit ball the optimal regularity up to the boundary is $C^s(\overline {B_1})$, cf. \cite[section 7.1]{NonlocalSurvey}. Therefore, at least with our methods proving such a global $H^{s,p}$ regularity result for the equations we consider in this work seems to be unattainable even in the case when $\Omega$ is very regular.
\end{remarkb}

\bibliographystyle{amsplain}

\end{document}